\documentclass[reqno]{amsart}            
\usepackage{amsthm,amssymb,amsfonts,graphicx,cite,color}
\usepackage[bookmarks,bookmarksnumbered,bookmarksopen,colorlinks,backref,linkcolor=blue,citecolor=red]{hyperref}%
\usepackage{xcolor}
\usepackage{graphicx}
\usepackage{mathrsfs}

\newtheorem{theorem}{Theorem}[section]
\newtheorem{definition}{Definition}[section]
\newtheorem{lemma}{Lemma}[section]
\newtheorem{corollary}{Corollary}[section]
\newtheorem{remark}{Remark}[section]
\newtheorem{proposition}{Proposition}[section]
\numberwithin{equation}{section}

\begin{document}
	
	\title[Reactive Transport in Heterogeneously Fractured Porous Media]
	{Multiscale Hyperbolic-Parabolic Models for Nonlinear Reactive Transport in Heterogeneously Fractured  Porous Media}
	
	\author[Taras Mel'nyk \ \& \ Sorin Pop \ \& \ Christian Rohde]{Taras Mel'nyk$^\flat$ \  \& \ Sorin Pop$^\sharp$ \ \& \ Christian Rohde$^\natural$  }
	\address{\hskip-12pt  $^\flat$ 
		1) \, Institute of Applied Analysis and Numerical Simulation,
		Faculty of Mathematics and Physics, Suttgart University\\
		Pfaffenwaldring 57,\ 70569 Suttgart,  \ Germany,\\
		\ \ 2)
		Department of Mathematical Physics, Faculty of Mathematics and Mechanics\\
		Taras Shevchenko National University of Kyiv\\
		Volodymyrska str. 64,\ 01601 Kyiv,  \ Ukraine
	}
	\email{taras.melnyk@mathematik.uni-stuttgart.de}
	
	\address{\hskip-12pt  $^\sharp$ 
		Hasselt University, Faculty of Sciences\\
		Agoralaan Gebouw D, \ Diepenbeek 3590, \ Belgium
	}
	\email{sorin.pop@uhasselt.be }

	\address{\hskip-12pt  $^\natural$ Institute of Applied Analysis and Numerical Simulation,
		Faculty of Mathematics and Physics, Suttgart University\\
		Pfaffenwaldring 57,\ 70569 Suttgart,  \ Germany
	}
	\email{christian.rohde@mathematik.uni-stuttgart.de }

	\begin{abstract}
		We study nonlinear reactive transport in a layered porous medium that is separated into two bulk domains by  a thin, geometrically highly heterogeneous fracture. Precisely, the fracture is formed by  an 
		$\varepsilon\!\!$-thin channel  whose aperture and  internal obstacle pattern vary periodically on the same microscale. 
		The reactive transport of the  species  in the two bulk domains is modelled by a system of  parabolic reaction-diffusion equations. It couples to a convection-diffusion-reaction problem in the thin fracture  accounting for nonlinear chemical reactions on the    fracture's  wall and obstacle 
		boundaries.  Notably,  the  species transport inside the fracture  is convection dominated with Péclet number of order $\varepsilon^{-1}$.\\
		We perform a multiscale asymptotic analysis as $\varepsilon \to 0$ and are led, in the limit, to 
		a new type of homogenized model in which the thin layer collapses to a flat interface. The homogenized limit consists of classical diffusion–reaction equations in the bulk domains coupled through homogenized nonlinear interface conditions and a first-order semilinear hyperbolic system posed on the flat interface. The latter encodes the effective fracture dynamics produced by the  strong advection and the microscale geometry. The following aspects are addressed in this study: firstly, the well-posedness of the homogenized system is proven, including the derivation of regularity results; secondly, an explicit multiscale approximation incorporating boundary-layer correctors is constructed; and thirdly, quantitative error estimates in appropriate energy norms are established. These three aspects rigorously justify the reduced model and quantify the approximation accuracy with respect to the small parameter $\varepsilon.$
	\end{abstract}
	
	\maketitle
	\tableofcontents

	\section{Introduction}
	\newcommand{\eps}{\varepsilon}
	We consider  reactive species transport in fractured porous media. As a fracture we understand   structures that have a  small aperture relative to their length. Fractures are ubiquitous in natural and engineered porous media and strongly influence flow and transport of dissolved species. In many applications, such as hydrogeology, contaminant remediation, and subsurface reactive flows, fractures act as preferential pathways with much larger convective transport than the surrounding matrix. In reality, fractures are often geometrically complex; for example, their aperture may vary rapidly and they may  contain embedded obstacles, possibly inducing several spatial scales. 

	These microscale heterogeneities complicate direct numerical simulation and motivate rigorous upscaling  of the microscale models to derive effective macroscopic models, in which the thin fracture is reduced to a lower-dimensional interface.
	
	In this contribution we are interested in chemically reactive transport of species in a two-dimensional porous medium that 
	is separated by a horizontal channel-like fracture into two \textit{bulk domains}. For some  small parameter $\varepsilon,$ the  fracture has ${\mathcal O}(\varepsilon)\!$-aperture. As geometrical heterogeneities we account for the fracture's  wall roughness (differently on the fracture's  upper and lower side) and some  internal obstacle pattern which are assumed to vary periodically on the same microscale. For the sake of illustration we refer to Fig.~\ref{fig1}.
	\begin{figure}[htbp]  
		\vspace*{-0.3cm}
		\begin{center}
			\includegraphics[width=7cm]{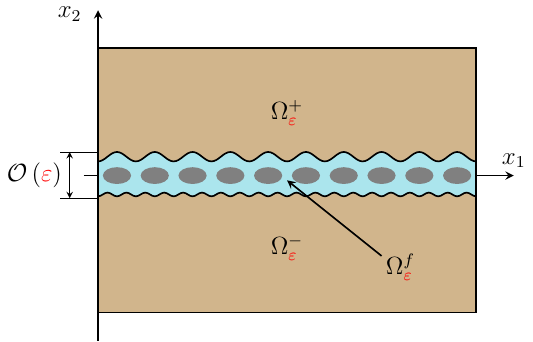}
		\end{center}
		\vspace*{-0.4cm}
		\caption{Sketch of the two bulk domains and the thin fracture.}\label{fig1}
	\end{figure}
	
	The reactive transport of the species in the two bulk domains is modelled by a system of parabolic 
	reaction-diffusion equations which  couples to a convection-diffusion-reaction problem in the thin fracture accounting for
	nonlinear chemical reactions on the fracture's boundaries.  We assume that the 
	species transport  in the  fracture is  convection-dominated, i.e., the Péclet number is proportional to 
	the inverse of $\eps$. The coupling between the bulk domains and the fracture is done by  nonlinear side‑dependent flux conditions on the rough boundaries whereas nonlinear Robin conditions are employed 
	on the obstacle boundaries. The complete microscale model is given in  \eqref{probl} below.
	Up to our knowledge such a setting has not been analyzed before with respect to the limit $\varepsilon \to 0$.

	Due to the nonlinear interface laws and convection-dominated transport in the fracture, the asymptotic analysis leads to qualitatively different homogenized behaviour compared to previous works. 
	The homogenized limit (see \eqref{homo-problem}) consists of nonlinear diffusion–reaction equations in the bulk domains, coupled through homogenized nonlinear interface conditions, together with a first‑order semilinear hyperbolic system posed on the flat interface that captures the effective fracture dynamics generated by strong advection and the microscale geometry. The well‑posedness of this homogenized system is established in Theorem~\ref{Th-4-1}, and the $C^3$-regularity of its solution, required for proving the error estimates, is derived in Lemma~\ref{Th-4-4}.
	
	Another novel contribution 	is the  complete approximation for the microscopic solution, i.e., 	we construct an approximation (see \eqref{app-functions}) that incorporates boundary‑layer correctors near the oscillating boundaries in the bulk domains (\S~\ref{Par-4-2}) as well as additional correctors near the vertical boundaries of the fracture  (\S~\ref{BLPs}) to satisfy the prescribed Dirichlet condition.

	Finally, we obtain error estimates of order $\mathcal{O}(\sqrt{\varepsilon })$ in the energy norm (see Theorem~\ref{main theorem}), thereby rigorously justifying the adequacy of the homogenized model. These estimates clearly reveal the influence of the interface microstructure through the second‑order terms of the asymptotic approximation, including the boundary‑layer correctors, and thus provide deeper insight into how microscopic features and convection‑dominated transport shape the macroscopic behavior. 
	
	\medskip 
	
	We next review the existing literature on homogenization in fractured porous media.
	 The literature pertaining to such issues has expanded considerably over the past two decades such that we restrict ourselves to the most closely related works and the differences between them and our own. For broader context, we  refer to works on related classes of multiscale problems, including thin‑domain models \cite{Arrieta-2025,Macrone-2024,MvD11,Mel-Roh_Non-Diff-2024}, models in junctions of thin structures \cite{Bun-Gau-2026,Gau-Pan-2026,Mel-Roh_AsAn-2024,Mel-Roh_JMAA-2024}, and homogenization problems in porous media \cite{AMP10,Bha-Gah-Rad-2022,Gahn-Radu-Pop-2021,Gahn-Pop-2023,Donato-2025,Melnyk-2026,RdBCM25}.  Among those, the papers \cite{AMP10,MvD11,Mel-Roh_AsAn-2024,Mel-Roh_JMAA-2024,Mel-Roh_Non-Diff-2024,RdBCM25} address convection-dominated transport, but not in a fractured porous-media geometry like in Fig.~\ref{fig1}.
	It is worth noting that only the results in \cite{MvD11,Mel-Roh_AsAn-2024,Mel-Roh_JMAA-2024,Mel-Roh_Non-Diff-2024} yield hyperbolic dispersion equations in the limit.
	
	A significant body of research has been dedicated to the study of problems involving the  separation of two bulk domains by  an  $\varepsilon\!$-thin fracture. These problems have been examined from the perspective of effective interface laws for the limit $\eps\to 0.$ We  restrict ourselves  to the most recent contributions \cite{Gahn-Radu-2018,Gahn-Jag-Radu-2022,Gahn-Radu-2025,Gahn-2025,Freud-Eden-2024,Freud-Eden-2025,Lis-Kum-Pop-Rad-2020,Pop-Bog-Kum-2017} that provide rigorous asymptotic justification of dimensionally-reduced models in the the limit as $\varepsilon \to 0$ (see also the fairly comprehensive reviews contained in these papers).
	
	The works \cite{Gahn-Radu-2018,Gahn-Jag-Radu-2022,Gahn-Radu-2025,Gahn-2025,Freud-Eden-2024,Freud-Eden-2025} derive macroscopic models equipped with effective interface conditions and establish weak and strong (two‑scale) convergence of the microscopic solutions. 
	The results under consideration are obtained by means of a priori estimates in conjunction with the concept of two-scale convergence for thin heterogeneous layers, as introduced in \cite{Neuss-Jaeger-2007}. 
	In \cite{Pop-Bog-Kum-2017}, by averaging in the transversal direction of the thin rectangular fracture  and using a formal asymptotic approach, a reduced model is obtained for reactive transport with prescribed shear convection in the thin rectangle and with nonlinear constitutive transmission conditions on the interface, and convergence is rigorously established  for the moderate P\'eclet regime of order $\mathcal{O}(1).$ The same analytical techniques are used in \cite{Lis-Kum-Pop-Rad-2020} to derive effective models for flows governed by Richards equations with different scalings of fracture‑to‑matrix porosity and fracture‑to‑matrix permeability, together with continuous transmission conditions.
	
	Fractures with more complex microscale structure are addressed  in \cite{Hoerl-Rohde-2024}, however restricted to  linear Darcy flow. The work  \cite{Gahn-Jag-Radu-2021}  analyses   nonlinear reactive transport in  porous media that involve a  thin heterogeneous layer. Explicit approximations of the microscopic solution to a nonlinear reaction-diffusion equation are constructed and error estimates in the energy norm are established. Aside from \cite{Gahn-Jag-Radu-2021} and up to our knowledge, only the older paper \cite{Pan-1981} offers this level of quantitative justification. In fact, our methodological approach is 
	close to that of \cite{Gahn-Jag-Radu-2021,Pan-1981}.
	 
	The authors of  \cite{Gahn-Radu-2018} consider  purely reactive–diffusive transport through a thin rectilinear heterogeneous layer for different scalings  in the diffusion matrix of the  transport system in the fracture, together with nonlinear side-dependent flux conditions on the interfaces.
	Similar to the analysis in \cite{Gahn-Radu-2018}, macroscopic transport equations for the solute concentration are obtained in \cite{Gahn-Radu-2025}, where a coupled model for fluid flow and reactive solute transport in a three‑dimensional domain is considered.
	
	None of the above‑mentioned works consider convection‑driven transport in a fractured porous medium, which leads to a hyperbolic limit problem as $\varepsilon \rightarrow 0.$ Moreover, the geometric configuration, transport mechanisms, and analytical framework in our setting differ significantly from those in the existing literature.
	
	\medskip

	The paper is organized as follows. Section~\ref{Sec-2} introduces the geometric configuration of the domain, formulates the assumptions on the data together with their mathematical and physical motivation, and presents the microscopic problem in precise form.
	Section~\ref{Sect-3} carries out the formal asymptotic analysis. We first analyze the thin heterogeneous fracture (\S~\ref{Par-4-1}) and then the bulk domains (\S~\ref{Par-4-2}), establishing in each case the well‑posedness of the associated model problems. These results allow us to identify the subsequent terms in the asymptotic ansatzes for the microscopic solution, obtained as solutions of cell boundary‑value problems generated by the periodic fracture structure and as solutions of boundary‑layer problems near the oscillatory interfaces in the bulk domains.
	Section~\ref{Sect-4} collects the relations derived in Section~\ref{Sect-3} into the homogenized problem, for which we prove existence and uniqueness of a weak solution and then establish its  smoothness properties in \S~\ref{par-exist}.
	In \S~\ref{BLPs}, we construct boundary‑layer asymptotics near the right side of the thin fracture to enforce the Dirichlet condition on that part of the boundary. In Section~\ref{Sect-5}, using these asymptotic ansatzes together with the method of matching asymptotic expansions, we construct a global approximation of the microscopic solution, compute the residuals it leaves in the original problem, and establish asymptotic estimates for the difference between the microscopic solution and the approximation in various norms, including the energy norm. Finally, Section~\ref{Sect-6} discusses results and how the proposed approach can be applied to other problems, and it also outlines directions for future research.

	\section{Problem statement}\label{Sec-2}
	
	\paragraph*{{\bf Domain structure}}
	Consider the rectangle $\Omega := (0, \ell)\times(-\mathfrak{h}^-, \, \mathfrak{h}^+),$ where $\mathfrak{h}^\pm >0,$ inside which a thin perforated domain (fracture) $\Omega^f_\varepsilon$ with rapidly varying boundaries is embedded. This fracture divides $\Omega$ into two bulk subdomains $\Omega^+_\varepsilon$ and $\Omega^-_\varepsilon$ (see e.g. Fig.~\ref{fig1}).
	We begin by describing the thin domain $\Omega^f_\varepsilon.$  
	
	Let $h_+$ and $h_-$ be $C^{2,\mu}$-smooth, positive, $1$-periodic, and even functions on $\mathbb{R};$ here $\mu\in (0,1).$ We  additionally assume that $h_\pm = 1$ in a small neighborhood of $0.$   
		Define the domain
	\[
	Y := \left\{ \xi = (\xi_1, \xi_2) \in \mathbb{R}^2 \colon\ 0 < \xi_1 < 1,\ -h_-(\xi_1) < \xi_2 < h_+(\xi_1) \right\}.
	\]
	
	Let $T_0$ be a finite union of $C^{2,\mu}$-smooth, disjoint, non-tangent subdomains strictly contained in $Y.$  
	We then define the periodicity cell as $Y_0 := Y \setminus \overline{T_0}.$  
	
	To describe parts of the boundary $\partial Y_0,$ we introduce the following notation: the sets
	\[
	S^\pm := \left\{ \xi \in \mathbb{R}^2 \colon\ \xi_2 = \pm h_\pm(\xi_1),\ 0 < \xi_1 < 1 \right\}
	\]
	denotes either the upper ($+$) or lower ($-$)  part of the boundary, depending on the sign;   
	the symbol $\partial T_0$ refers to the boundaries of the holes inside $Y_0$; and
	\[
	\Gamma := \partial Y_0 \setminus \left( S^\pm \cup \partial T_0 \right)
	\]
	represents the union of the vertical segments of $\partial Y_0.$
	
	Introducing a small parameter $\varepsilon := \frac{\ell}{N},$ where $N$ is a large positive integer, and taking into account the $1$-periodicity of $h_\pm,$ the functions $h_+(\frac{x_1}{\varepsilon})$ and $h_-(\frac{x_1}{\varepsilon})$ $(x_1\in \Bbb R)$  are  $\varepsilon$-periodic. Define the thin domain
	\begin{equation}\label{cylindr}
		Q_\varepsilon :=\Big\{x=(x_1, x_2) \in \Bbb R^2\colon \ \ x_1 \in (0, \ell), \quad - \varepsilon\, h_-\Big(\frac{x_1}{\varepsilon}\Big) < x_2 < \varepsilon \, h_+\Big(\frac{x_1}{\varepsilon}\Big)\Big\},
	\end{equation}
	which has thickness of order $\mathcal{O}(\varepsilon)$ and $\varepsilon$-periodic oscillations of the upper and lower parts of the boundary. 
	
	Let
	$$
	T := \bigcup_{k\in \Bbb N_0} \big( k \, \vec{\boldsymbol{e}}_1 + T_0\big),
	$$
	where $\vec{\boldsymbol{e}}_1= (1, 0).$ We denote  by ${T}_\varepsilon := \varepsilon \, T$ the homothetic transformation of $T$ with  the coefficient $\varepsilon.$ Then, the thin perforated domain is defined by 
	$$
	\Omega^f_\varepsilon := Q_\varepsilon \setminus  \overline{T_\varepsilon}.
	$$
	
	\vspace{-0.5cm}
	\begin{figure}[htbp]
		\begin{center}
			\includegraphics[width=12cm]{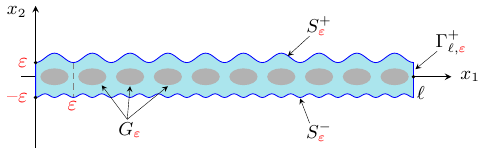}
		\end{center}
		
		\vspace{-0.5cm}
		\caption{ Schematic illustration of the thin fracture $\Omega^f_\varepsilon$}\label{fig-2}
	\end{figure}

	The boundary $\partial \Omega^f_\varepsilon$ consists of the following parts (see Fig.\ref{fig-2}):
	\begin{itemize}
		\item $G_\varepsilon := \partial T_\varepsilon \cap Q_\varepsilon$ is the boundaries of holes ${T}_\varepsilon,$
		\item $S^\pm_\varepsilon := \big\{x\in \Bbb R^2\colon \ \  x_2 =  \pm \varepsilon\,  h_\pm\big(\frac{x_1}{\varepsilon}\big), \ \  x_1 \in (0, \ell) \big\}$ are upper/lower oscillating parts, respectively,
		\item $\Gamma_\varepsilon := \partial\Omega_\varepsilon \setminus \big(G_\varepsilon \cup S^+_\varepsilon \cup S^-_\varepsilon\big)$ is the union of two vertical equal intervals  $\Gamma_{0,\varepsilon}$ and $\Gamma_{\ell,\varepsilon}.$
	\end{itemize}
	
	Now we define the bulk domains $\Omega^+_\varepsilon$ and $\Omega^-_\varepsilon:$
	$$
	\Omega^\pm_\varepsilon := \{x\colon \ \pm x_2 > 0\} \bigcap \big(\Omega \setminus \Omega^f_\varepsilon\big).
	$$
	
	In the paper, the index  "$+$"  at the top will always indicate the connection with the upper part of $\Omega,$ the index "$-$" with  the lower part,
	and the index "$f$" with the thin domain (fracture) $\Omega^f_\varepsilon$ or the  interval
	$$
	\mathcal{I}^f := \{x\colon\ x_1 \in (0, \ell), \ \ x_2 =0\},
	$$
	to which the domain $\Omega^f_\varepsilon$ shrinks as $\varepsilon \to 0.$
	
	\begin{remark}
	Instead of the rectangle $\Omega,$ we may consider a planar domain with smooth boundary; the only requirement is that its intersection with $\{ x\colon |x_2|<\delta \},$ for some $\delta >0,$ coincides with the rectangle $\{x\colon x_1 \in (0, \ell), \ |x_2| < \delta\}.$
	We choose the rectangle $\Omega$ to simplify the description of the problem and to reduce some of the technical arguments in the proof of Lemma~\ref{Th-4-4}.
	\end{remark}
	
	\paragraph*{{\bf Problem}} We consider the following parabolic  problem:
	\begin{equation}\label{probl}
		\left\{\begin{array}{rcll}
			\partial_t u^\pm_{k,\varepsilon} -  D^\pm_k  \Delta u^\pm_{k,\varepsilon}
			& = & F^\pm_k(\mathbf{u}^\pm_\varepsilon, x, t) + \mathfrak{f}_k^\pm(x,t) &
			\text{in} \ \ \Omega^{\pm, T}_\varepsilon,
			\\[2mm]
			u^\pm_{k,\varepsilon}
			& = & 0, & \text{on} \ \partial\Omega^{\pm, T}_\varepsilon \setminus S^{\pm, T}_\varepsilon,
			\\[2mm]
			D^\pm_k \, \nabla u^\pm_{k,\varepsilon} \cdot \boldsymbol{\nu}_\varepsilon &=&  \Upsilon^\pm_k\big( \mathbf{u}^\pm_\varepsilon, \mathbf{u}^f_\varepsilon, \tfrac{x}{\varepsilon}, x_1, t \big) &  \text{on} \ \   S^{\pm, T}_\varepsilon,
			\\
			
			\\
			\partial_t u^f_{k,\varepsilon} -  \varepsilon\, \nabla_x \cdot\big( \mathbb{D}_k(\tfrac{x}{\varepsilon}) \, \nabla_x u^f_{k,\varepsilon}\big) +
			\nabla_x \cdot \big( \overrightarrow{V_\varepsilon}(x) \, u^f_{k,\varepsilon}\big) &=& 0  & \text{in} \ \ \Omega^{f, T}_\varepsilon,
			\\[2mm]
			-  \varepsilon \,  \mathbb{D}_k(\tfrac{x}{\varepsilon})\nabla_x u^f_{k,\varepsilon}  \cdot \boldsymbol{\nu}_\varepsilon &  = & \varepsilon^\alpha \, \Phi^\pm_k\big(\mathbf{u}^\pm_\varepsilon, \mathbf{u}^f_\varepsilon, \tfrac{x}{\varepsilon}, x_1, t\big) &
			\text{on} \ S^{\pm, T}_\varepsilon,
			\\[2mm]
			-  \varepsilon \,  \mathbb{D}_k(\tfrac{x}{\varepsilon})\nabla_x u^f_{k,\varepsilon} \cdot \boldsymbol{\nu}_\varepsilon &  = & \varepsilon^\beta \, \Psi_k\big(\mathbf{u}^f_\varepsilon, \tfrac{x}{\varepsilon}, x_1, t\big) &
			\text{on} \ G^T_\varepsilon,
			\\
			
			\\
			u^f_{k,\varepsilon}\big|_{x_1= 0} = 0 \ \ \text{on} \ \Gamma^T_{0,\varepsilon},
			& & 
			u^f_{k,\varepsilon}\big|_{x_1=\ell}
			=  q^\ell_k(t), \ \  \text{on} \ \Gamma^T_{\ell,\varepsilon}, &k\in \{1,\ldots, \mathcal{M}\},
			\\[4mm]
			\mathbf{u}^\pm_\varepsilon\big|_{t=0} \ = \ \mathbf{u}^f_\varepsilon\big|_{t=0}&=& \mathbf{0} & \text{on} \ \Omega_{\varepsilon},
		\end{array}\right.
	\end{equation}
	where the vector-functions $\mathbf{u}^\pm_\varepsilon = \big(u^\pm_{1,\varepsilon},\ldots,u^\pm_{\mathcal{M},\varepsilon}\big)$ and
	$\mathbf{u}^f_\varepsilon = \big(u^f_{1,\varepsilon},\ldots,u^f_{\mathcal{M},\varepsilon}\big)$ are the unknown concentration densities of various chemicals, parameters $\alpha$  and $\beta$ are greater than or equal to $1,$  $\boldsymbol{\nu}_\varepsilon$ is the outward unit normal to the boundary of $\Omega^f_\varepsilon,$ $\frac{x}{\varepsilon} =(\frac{x_1}{\varepsilon}, \frac{x_2}{\varepsilon}),$ and 
	\begin{gather*}
		\Omega^{\pm, T}_\varepsilon := \Omega^\pm_\varepsilon \times (0, T],\quad 
		\partial\Omega^{\pm, T}_\varepsilon := \partial\Omega^\pm_\varepsilon \times (0, T], \quad
		S^{\pm, T}_\varepsilon := S^\pm_\varepsilon \times (0, T],
		\\
		\Omega^{f, T}_\varepsilon := \Omega^{f}_\varepsilon \times (0, T],\quad
		G_\varepsilon^T := G_\varepsilon \times (0, T], \quad \Gamma^T_{0,\varepsilon} := \Gamma_{0,\varepsilon}\times (0, T], \quad 
		\Gamma^T_{\ell,\varepsilon} := \Gamma_{\ell,\varepsilon}\times (0, T].
	\end{gather*}
		
	\paragraph*{{\bf Assumptions on the data}}
	\begin{description}
		\item[A1] 
		For each \(k = 1,\dots,\mathcal{M}\), the bulk diffusion coefficients \(D_k^+\) and \(D_k^-\) are strictly positive, and the microscopic diffusion tensor
		\[
		\mathbb{D}_k\!\bigl(\tfrac{x}{\varepsilon}\bigr)
		= \bigl\{d^k_{ij}(\tfrac{x}{\varepsilon})\bigr\}_{i,j=1}^2
		\]
		is symmetric and uniformly positive definite.  Moreover, each component \(d^k_{ij}(\xi)\) is smooth and \(1\)-periodic in $\xi_1$  on \(\overline{Y}_0\), and $\mathbb{D}_k$ is a diagonal constant matrix in a small neighborhood of the interval $\{\xi\colon \xi_1 =0, \ \xi_2 \in [-1,1]\}.$
	\end{description}
	
	\begin{remark}
		In the bulk regions the diffusion coefficients are constant.  Extending these constants to smooth, positive-definite tensors poses no additional difficulty.
	\end{remark}

	\begin{description}
		\item[A2] The given convective velocity field, $\overrightarrow{V_\varepsilon} = \left(v_1(\frac{{x}}{\varepsilon}), \  v_2(\tfrac{{x}}{\varepsilon})
		\right)$
		is conservative  and
		its potential $p$ is a solution to the boundary-value problem
		\begin{equation}\label{potential0}
			\left\{\begin{array}{rcll}
				\Delta_\xi p(\xi)  & = & 0, & \quad
				\xi= (\xi_1, \xi_2) \in Y_0,
				\\[3pt]
				\partial_{\boldsymbol{\nu}_\xi} p(\xi)  &=&  0, & \quad
				\xi \in S^\pm \cup  \partial T_0,
				\\[4pt]
				\partial_{\xi_1} p(\xi)\big|_{\xi_1=0}   &=& \partial_{\xi_1} p(\xi)\big|_{\xi_1=1} \  = \ \mathrm{v}_0(\xi_2), & \quad
				\xi_2  \in [-1, \, 1],
				\\[4pt]
				\langle p \rangle_{Y_0} &=& 0, &
			\end{array}\right.
		\end{equation}
		where $\mathrm{v}_0$ is a smooth,  positive function on the closed interval $ [-1, 1]$ and $\mathrm{v}'_0(\pm 1) = 0,$
		$\Delta_\xi$ is  the Laplace operator, $\partial_{\xi_1} = \frac{\partial}{\partial \xi_1},$   $\partial_{\boldsymbol{\nu}_\xi}$ is the derivative along the  outward unit normal $\boldsymbol{\nu}_\xi$ to $\partial Y_0,$ and
		$$
		\langle p \rangle_{Y_0} := \frac{1}{\upharpoonleft\!\! Y_0 \!\!\upharpoonright_2} \int_{Y_0} p(\xi)\, d\xi.
		$$
		Hereinafter, the symbol $\upharpoonleft\!\! \mathcal{S} \!\!\upharpoonright_n$ denotes  the $n$-dimensional Lebesgue measure of a set $\mathcal{S}$ $(n\in \{1, 2\}).$
		It is easy to verify that  problem \eqref{potential0} has a unique smooth solution. Thus,
		$$
		\overrightarrow{V}(\xi) = \nabla_\xi p(\xi), \quad \xi \in \overline{Y}_0, \quad \text{and} \quad
		\overrightarrow{V_\varepsilon} = \overrightarrow{V}(\xi)\big|_{\xi=\frac{x}{\varepsilon}}, \ \ x \in \Omega^f_\varepsilon.
		$$
	\end{description}
	\begin{remark}
		If $Y_0 = (0, 1)\times (-1, 1)$ and $\mathrm{v}_0$ is a constant,  then the potential $p(\xi) = \mathrm{v}_0 \, \xi_1$ and the velocity field $\overrightarrow{V} = (\mathrm{v}_0, 0)$ represents a shear flow.
		Shear flows naturally arise in various physical contexts (see, e.g., \cite{Maj_Kra_1999}). As noted in that work, a particularly valuable extension of the shear flow model is the inclusion of a nonzero transverse sweep.
		
		The roughness and perforated structure of the fracture $\Omega^f_\varepsilon$ clearly influence the convective flow. To account for these irregularities, we consider conservative convective flows that incorporate nonzero transverse sweeps.
		
		It is evident that both shear flows and the flow under consideration are incompressible $(\nabla_\xi \boldsymbol{\cdot} \overrightarrow{V}(\xi) = 0).$
		In our case, the inflow of $\overrightarrow{V}$ into $Y_0$ from the left is equal to the outflow from $Y_0$ on the right.
		
	\end{remark}
	
	\begin{description}
		\item[A3] For each $k\in \{1,\ldots,\mathcal{M}\}$, the function $\mathfrak{f}_k^\pm$ and the nonlinear functions 
		$F^\pm_k$, $\Upsilon^\pm_k$, $\Phi^\pm_k$, and $\Psi_k$, which describe the chemical processes in our problem, 
		are $C^3$-smooth in their respective domains of definition and are uniformly bounded together with all their derivatives. 
		In addition, 
		$
		\mathfrak{f}_k^\pm(x,0)=0,
		$
		and
		\begin{equation}\label{init-conds}
			F_k^\pm(\mathbf{0},x,t)
			= \Upsilon_k^\pm(\mathbf{0},\mathbf{0},\xi,x_1,t)
			= \Phi_k^\pm(\mathbf{0},\mathbf{0},\xi,x_1,t)
			= \Psi_k(\mathbf{0},\xi,x_1,t)
			= 0.
		\end{equation}
		Moreover, $\mathfrak{f}_k^\pm$ and $F^\pm_k$ have compact support in $x\in\Omega^\pm$, while 
		$\Upsilon^\pm_k$, $\Phi^\pm_k$, and $\Psi_k$ have compact support in $x_1\in \mathcal{I}^f$, independent of the other variables.
		
		Finally, there exists a small $\delta>0$ such that, for all $t\in[0,\delta]$,
		\begin{equation}\label{init-conds+}
			\Upsilon_k^\pm(\cdot,\cdot,\cdot,\cdot,t)
			= \Phi_k^\pm(\cdot,\cdot,\cdot,\cdot,t)
			= \Psi_k(\cdot,\cdot,\cdot,t)
			= 0.
		\end{equation}
	\end{description}
	
	\begin{description}
		\item[A4]
		Functions  $\{q^\ell_k(t), \ t\in [0, T]\}_{k=1}^\mathcal{M} $ are nonnegative and $C^2$-smooth, and satisfy 
		\begin{equation}\label{match_conditions}
			q^\ell_k(0) =  \frac{d q^\ell_k}{dt}(0) = 0, \quad k\in \{1,\ldots,M\}.
		\end{equation}
	\end{description}
	\begin{remark}\label{rem-2-3}
		As will be seen later, the $C^3$-smoothness of the nonlinear functions is required in order to determine 
		higher-order terms in the asymptotic expansion of the solution to problem~\eqref{probl}. 
		In particular, this regularity guarantees Lipschitz continuity; for example,
		\begin{equation}\label{Lip-ineq}
			\big|\Upsilon^+_k(\mathbf{u}^+, \mathbf{u}^f, \xi, x_1,t) 
			- \Upsilon^+_k(\mathbf{v}^+, \mathbf{v}^f, \xi, x_1,t)\big|
			\le C\big(|\mathbf{u}^+ - \mathbf{v}^+| + |\mathbf{u}^f - \mathbf{v}^f|\big),
		\end{equation}
		with a constant $C$ independent of the arguments. 
		The compact-support assumptions model localized reactions (e.g., solute penetration from the solid into the fracture) 
		and automatically enforce the zero- and first-order matching conditions in~\eqref{probl}. 
		The relations~\eqref{init-conds+} mean that chemical activity in the fracture begins only after a delay, 
		which is natural in applications, and they are also important  in establishing the regularity of the solution 
		to the homogenized problem.
	\end{remark}

	From a physical point of view, problem \eqref{probl} can be interpreted as the evolution of the densities of various chemical species
	$\mathbf{u}^\pm_\varepsilon = \big(u^\pm_{1,\varepsilon},\ldots,u^\pm_{\mathcal{M},\varepsilon}\big)$ and
	$\mathbf{u}^f_\varepsilon = \big(u^f_{1,\varepsilon},\ldots,u^f_{\mathcal{M},\varepsilon}\big)$
	in the bulk domains $\Omega_\varepsilon^\pm$ and in the fracture $\Omega_\varepsilon^f$, respectively.
	The process begins in the bulk domains, where each species is subject to the source term
	 $\boldsymbol{\mathfrak{F}}^\pm= (\mathfrak{f}_1^\pm,\ldots,\mathfrak{f}_\mathcal{M}^\pm)$ and the reaction term $\mathbf{F}^+(\mathbf{u}^\pm_\varepsilon,x,t)=\big(F^+_1(\mathbf{u}^\pm_\varepsilon,x,t),\ldots,F^+_\mathcal{M}(\mathbf{u}^\pm_\varepsilon,x,t)\big)$. 
	As diffusion proceeds, the species reach the interfaces $S_\varepsilon^\pm$ with the fracture, where 
	additional interfacial reactions occur. These reactions describe exchange between the bulk and the fracture, and are 
	governed by the boundary flux terms  
	\begin{gather*}
		\mathbf{\Upsilon}^\pm\big(\mathbf{u}^\pm_\varepsilon,\mathbf{u}^f_\varepsilon, \tfrac{x}{\varepsilon}, x_1, t\big) =
		\big(\Upsilon^\pm_1(\mathbf{u}^\pm_\varepsilon, \mathbf{u}^f_\varepsilon, \tfrac{x}{\varepsilon}, x_1, t),\ldots,\Upsilon^\pm_\mathcal{M}(\mathbf{u}^\pm_\varepsilon, \mathbf{u}^f_\varepsilon, \tfrac{x}{\varepsilon}, x_1, t)\big),    
		\\
		\mathbf{\Phi}^\pm\big(\mathbf{u}^\pm_\varepsilon,\mathbf{u}^f_\varepsilon, \tfrac{x}{\varepsilon}, x_1, t\big) =
		\big(\Phi^\pm_1(\mathbf{u}^\pm_\varepsilon, \mathbf{u}^f_\varepsilon, \tfrac{x}{\varepsilon}, x_1, t),\ldots,\Phi^\pm_\mathcal{M}(\mathbf{u}^\pm_\varepsilon, \mathbf{u}^f_\varepsilon, \tfrac{x}{\varepsilon}, x_1, t)\big).    
	\end{gather*}
	
	Inside the fracture, the transport is more complex: diffusion is scaled by $\varepsilon,$ and the flow field $\overrightarrow{V_\varepsilon}$ induces convection along the fracture from left to right. The 
	geometry is perforated, so the species also interact with the boundaries of the micro-holes. These additional 
	reactions are described by $\mathbf{\Psi}\big(\mathbf{u}^f_\varepsilon, \tfrac{x}{\varepsilon}, x_1, t\big) =
	\big(\Psi_1(\mathbf{u}^f_\varepsilon, \tfrac{x}{\varepsilon}, x_1, t),\ldots,\Psi_\mathcal{M}(\mathbf{u}^f_\varepsilon, \tfrac{x}{\varepsilon}, x_1, t)\big).$  Finally, boundary conditions at $x_1=0$ and $x_1=\ell$ prescribe inflow and outflow of species along the fracture.
	
	\subsubsection*{Weak formulation}	
		Denote by $\boldsymbol{\mathfrak{H}}^{\pm, *}_\varepsilon$ the dual space to the Sobolev vector space 
	$$
	\boldsymbol{\mathfrak{H}}^{\pm}_\varepsilon := \big\{ \mathbf{u}\in H^1(\Omega^\pm_\varepsilon; \mathbb{R}^\mathcal{M})\colon \ \mathbf{u}\big|_{\partial\Omega^\pm_\varepsilon \setminus S^\pm_\varepsilon} = \mathbf{0} \big\},
	$$ 
	and by 
	$\boldsymbol{\mathfrak{H}}^{f, *}_\varepsilon$ the dual space to  
	$\boldsymbol{\mathfrak{H}}^{f}_\varepsilon := \big\{ \mathbf{u}\in H^1(\Omega^f_\varepsilon; \mathbb{R}^\mathcal{M})\colon \ \mathbf{u}\big|_{\Gamma_{0,\varepsilon}} = \mathbf{0} \big\}.$ The brackets $\langle\cdot,\cdot\rangle^\pm_\varepsilon$ and $\langle\cdot,\cdot\rangle^f_\varepsilon$ denote the pairing of the corresponding spaces.

	\begin{definition}\label{Def-weak-sol}
		We say that a vector-function $\mathbf{u}_\varepsilon := \big(\mathbf{u}^+_\varepsilon, \mathbf{u}^-_\varepsilon, \mathbf{u}^f_\varepsilon \big)$ is a \emph{weak solution} to problem \eqref{probl} if
		$$
		\mathbf{u}^\pm_\varepsilon \in L^2(0,T; \boldsymbol{\mathfrak{H}}^{\pm}_\varepsilon), \quad \partial_t \mathbf{u}^\pm_\varepsilon \in L^2(0,T; \boldsymbol{\mathfrak{H}}^{\pm, *}_\varepsilon) \quad \text{and} \quad 
		\mathbf{u}^f_\varepsilon \in L^2(0,T; \boldsymbol{\mathfrak{H}}^f_\varepsilon), \quad \partial_t\mathbf{u}^f_\varepsilon \in L^2(0,T; \boldsymbol{\mathfrak{H}}^{f, *}_\varepsilon),
		$$
		and the following identities and conditions hold.
		\begin{itemize}
			\item For almost every \(t\in(0,T)\) and for all $\mathbf{v}^\pm \in \boldsymbol{\mathfrak{H}}^{\pm}_\varepsilon$ there holds
			\begin{multline}\label{identity-1}
				\langle\partial_t \mathbf{u}^\pm_\varepsilon, \mathbf{v}^\pm\rangle^\pm_\varepsilon 
				\, + \,  \int_{\Omega^\pm_\varepsilon} \mathbb{D}^\pm\nabla_x\mathbf{u}^\pm :  \nabla_x \mathbf{v}^\pm \, dx 
				\\
				= \int_{\Omega^\pm_\varepsilon} \left(\mathbf{F}^\pm(\mathbf{u}^\pm_\varepsilon, x, t) + \boldsymbol{\mathfrak{F}}^\pm\right) \cdot \mathbf{v}^\pm\, dx  \, - 
				\int_{S^\pm_\varepsilon} \mathbf{\Upsilon}^\pm\big(\mathbf{u}^\pm_\varepsilon,\mathbf{u}^f_\varepsilon, \tfrac{x}{\varepsilon}, x_1, t\big) \cdot \mathbf{v}^\pm\, dl_{x}.
			\end{multline}
			\item For almost every \(t\in(0,T)\) and for all
			\(\mathbf{v}^f\in H^1(\Omega^f_\varepsilon;\mathbb{R}^{\mathcal M})\) with
			\(\mathbf{v}^f|_{\Gamma_{0,\varepsilon}\cup\Gamma_{\ell,\varepsilon}}=\mathbf{0}\) there holds 
			\begin{multline}\label{identity-f}
				\langle\partial_t \mathbf{u}^f_\varepsilon, \mathbf{v}^f\rangle^f_\varepsilon 
				\, + \,  \varepsilon \int_{\Omega^f_\varepsilon}\mathbb{D}_\varepsilon(x)\nabla_x\mathbf{u}^f_\varepsilon :\nabla_x\mathbf{v}^f \, dx = - 
				\varepsilon^\beta \int_{G_\varepsilon} \mathbf{\Psi}\big(\mathbf{u}^f_\varepsilon, \tfrac{x}{\varepsilon}, x_1, t\big) \cdot \mathbf{v}^f\, dl_{x}
				\\
				- 	\varepsilon^\alpha \int_{S^+_\varepsilon} \mathbf{\Phi}^+\big(\mathbf{u}^+_\varepsilon,\mathbf{u}^f_\varepsilon, \tfrac{x}{\varepsilon}, x_1, t\big) \cdot \mathbf{v}^f \, dl_{x}
				- 	\varepsilon^\alpha \int_{S^-_\varepsilon} \mathbf{\Phi}^-\big(\mathbf{u}^-_\varepsilon,\mathbf{u}^f_\varepsilon, \tfrac{x}{\varepsilon}, x_1, t\big) \cdot \mathbf{v}^f \, dl_{x}.
			\end{multline}
			\item The trace condition on \(\Gamma_{\ell,\varepsilon}\) and the initial condition hold: 
			$$
			\mathbf{u}^f_\varepsilon|_{\Gamma_{\ell,\varepsilon}} = \bigl(q_1^\ell(t), \ldots, q_{\mathcal{M}}^\ell(t)\bigr), \qquad
			\mathbf{u}_\varepsilon|_{t=0}=\mathbf{0}.
			$$ 
		\end{itemize}
	\end{definition}
	Here,  $\mathbb{D}^\pm := \mathrm{diag}(D^\pm_1,\ldots,D^\pm_\mathcal{M})$ is a diagonal matrix,
	$\mathbb{D}_\varepsilon(x) :=\mathrm{diag}\bigl(\mathbb{D}_1(\tfrac{x}{\varepsilon}),\dots,\mathbb{D}_{\mathcal{M}}(\tfrac{x}{\varepsilon})\bigr)
	$
	is a block diagonal matrix, and the corresponding Frobenius-type contractions:
	$$
	\mathbb{D}^\pm\nabla_x\mathbf{u} : \nabla_x\mathbf{v} := \sum_{k=1}^{\mathcal{M}} D^\pm_k\nabla_x u_k \cdot \nabla_x v_k ,
	\qquad
	\mathbb{D}_\varepsilon(x)\nabla_x\mathbf{u}:\nabla_x\mathbf{v} := \sum_{k=1}^{\mathcal{M}} \bigl(\mathbb{D}_k(\tfrac{x}{\varepsilon})\nabla_x u_k\bigr) \cdot \nabla_x v_k . 	
	$$
	\begin{remark}
		It is known that if 
		$\mathbf{u}^\pm_\varepsilon \in L^2(0,T; \boldsymbol{\mathfrak{H}}^{\pm}_\varepsilon)$ and $\partial_t \mathbf{u}^\pm_\varepsilon \in L^2(0,T; \boldsymbol{\mathfrak{H}}^{\pm, *}_\varepsilon)$, then the function $\mathbf{u}^\pm_\varepsilon \in C([0,T]; L^2(\Omega^{\pm}_\varepsilon)^\mathcal{M}).$ Therefore,
		the equality $\mathbf{u}^\pm_\varepsilon|_{t=0}=\mathbf{0}$ is meaningful. The same statement holds for $\mathbf{u}^f_\varepsilon.$
	\end{remark}
	
	The existence and uniqueness of a weak solution to problem \eqref{probl} for each fixed $\varepsilon>0$, under the above Lipschitz assumptions on the nonlinear terms, is standard; one may obtain it, for example, by a Galerkin approximation (cf.\ \cite[Sect.~3]{Mel-Roh_Non-Diff-2024}) or by Sch\"afer's fixed point theorem (cf.\ \cite[Prop.~1]{Gahn-Radu-2018}).
	
	\medskip
	
	Our goal is to  study the asymptotic behavior of the solution $\mathbf{u}_\varepsilon = \big(\mathbf{u}^+_\varepsilon, \mathbf{u}^-_\varepsilon, \mathbf{u}^f_\varepsilon \big)$ as
	$\varepsilon \to 0,$ corresponding to the regime where the thin perforated layer $\Omega^f_\varepsilon$ collapses to the one-dimensional interval
	$\mathcal{I}^f := \{x\colon x_1\in (0, \ell),\ x_2 =0\},$ and
	the bulk domains $\Omega^+_\varepsilon$ and $\Omega^-_\varepsilon$ are transformed into
	$$
	\Omega^+ := (0, \ell)\times(0, \, \mathfrak{h}^+) \quad \text{and} \quad \Omega^- := (0, \ell)\times(-\mathfrak{h}^-, \, 0).
	$$
	Specifically, we  will  
	\begin{itemize}
		\item
		derive an effective limit model $(\varepsilon = 0)$ that 
		captures the impact of microscale features such as variable thickness, perforations and oscillating boundaries of the fracture,
		as well as chemical reactions and convection-dominated transport within it, on macroscopic behavior of diffusion-reaction-convection transport of dissolved substances;
		\item
		establish the well-posedness of the homogenized problem;
		\item
		construct the asymptotic approximation for the solution to  problem \eqref{probl} and prove the corresponding asymptotic estimates.
	\end{itemize}
	
	\begin{remark}\label{remark-2-6}
		Problem \eqref{probl} depends on three parameters: the small geometric parameter \(\varepsilon\), the intensity parameter \(\alpha\) appearing in the boundary conditions on the oscillating interfaces \(S^\pm_\varepsilon\), and the intensity parameter \(\beta\) appearing in the boundary conditions on the perforated boundaries \(G_\varepsilon\). In general (see \cite{Mel-Roh_AsAn-2024,Mel-Roh_JMAA-2024}), the construction of an asymptotic approximation requires adapting the ansatz and the asymptotic scaling to these parameters and to the detailed geometry of the problem. In this work we focus on the regime \(\alpha=\beta=1\), since this represents a critical scenario in which nonlinear boundary interactions act at the same asymptotic order as the bulk and fracture transport; consequently they contribute to the leading terms of the expansion and directly affect the homogenized model. 
		Other regimes (e.g. \(\alpha,\beta>1\) or \(\alpha,\beta<1\)) give rise to different asymptotic hierarchies and require adapted ansätze; they are not considered here and will be addressed in subsequent work.
	\end{remark}
	
	\begin{remark}\label{rem-2-4}
		In all statements in the paper,  $\varepsilon= \frac{\ell}{N}$ is a discrete parameter $(N\in \Bbb N).$  Therefore, $\varepsilon \to 0$
		means that $N \to +\infty.$
	\end{remark}

	
	
	\section{Formal asymptotic analysis}\label{Sect-3}
	
	\subsection{Analysis in the thin perforated domain}\label{Par-4-1}
	
	In  $\Omega^f_\varepsilon,$ we propose the following  asymptotic ansatz for each component $u_{k, \varepsilon}^f$ of the 
	solution $\mathbf{u}_\varepsilon = \big(\mathbf{u}^\pm_\varepsilon, \mathbf{u}^f_\varepsilon \big)$ to problem~\eqref{probl}:
	\begin{equation}\label{regul}
		\mathcal{U}^{(k)}_\varepsilon(x,t) :=  w_{0,k}(x_1,t) + \varepsilon\, N^{(k)}_1\big(\tfrac{x}{\varepsilon}\big) \, \partial_{x_1}w_{0,k}(x_1,t)
		+ \varepsilon^2 N^{(k)}_{2}\big(\tfrac{x}{\varepsilon}\big) \, \partial^2_{x_1^2}w_{0,k}(x_1,t),
	\end{equation}
	where $\partial_{x_1} := \frac{\partial}{\partial x_1},$ $\partial^2_{x_1} := \frac{\partial^2}{\partial x^2_1},$ and coefficients $N^{(k)}_1(\xi)$ and $N^{(k)}_{2}(\xi)$ are $1$-periodic in $\xi_1.$
	To keep the notation simple, we will drop the index $k$ for all coefficients both in \eqref{regul} and throughout this paragraph.
	
	Substituting $\mathcal{U}_\varepsilon$ in the corresponding differential equation of  problem \eqref{probl} and considering the incompressibility of the vector field $\overrightarrow{V_\varepsilon},$ we obtain
	\begin{multline}\label{eq1}
		\partial_t \mathcal{U}_\varepsilon -  \varepsilon\, \nabla_x \cdot\big( \mathbb{D}(\tfrac{x}{\varepsilon}) \nabla_x \mathcal{U}_\varepsilon \big) +
		\overrightarrow{V_\varepsilon}(x) \cdot \nabla_x \mathcal{U}_\varepsilon =
		\partial_t w_0 + \varepsilon\, N_1\, \partial^2_{t x_1}w_0
		\\
		- \Big(L_{\xi\xi}(N_1(\xi)) + \sum_{i=1}^{2} \partial_{\xi_i} d_{1 i}(\xi) - \overrightarrow{V}(\xi) \cdot \nabla_{\xi}\big(\xi_1 + N_1(\xi)\big)\Big)\Big|_{\xi=\frac{x}{\varepsilon}} \, \partial_{x_1} w_0
		\\
		- \varepsilon \bigg(\Big( L_{\xi\xi}(N_{2}(\xi)) + \sum_{i=1}^{2} \big(d_{1 i}\, \partial_{\xi_i}N_1  +
		\partial_{\xi_i}(d_{1 i} \, N_1)\big) + d_{11}(\xi) - \overrightarrow{V} \cdot \nabla_{\xi}N_{2}\Big)\Big|_{\xi=\frac{x}{\varepsilon}}\,  \partial^2_{x^2_1} w_0
		+ \mathcal{O}(\varepsilon^2),
	\end{multline}
	where $\partial_{t}w = \frac{\partial w}{\partial t},$ $\partial_{\xi_i}N = \frac{\partial N}{\partial \xi_i},$ $L_{\xi\xi}(N) = \sum_{i, j =1}^{2} \partial_{\xi_i}\big(d_{i j}(\xi) \partial_{\xi_j} N\big).$ Further, for convenience, we usually omit arguments in function notations (e.g., $d_{i j} = d_{i j}(\xi),$ $N = N(\xi),$ $w = w(x_1,t))$ unless this leads to misunderstandings. The symbol $\mathcal{O}(\varepsilon^2)$ in \eqref{eq1} denotes  the sum of terms of order $\varepsilon^2,$ namely
	$\varepsilon^2 \big(d_{11} N_{1} \, \partial^3_{x^3_1}w_0  + N_2\, \partial^3_{t x^2_1}w_0 + \varepsilon\, d_{11} N_{2} \, \partial^4_{x^4_1}w_0\big).$
	
	We require that the sum of the terms of order $\varepsilon^0$ in \eqref{eq1} vanishes. 
	To achieve this, we first eliminate the dependence on the microvariables $\xi = \frac{x}{\varepsilon}$ by imposing the conditions that
	\begin{gather}\label{eq2}
		L_{\xi\xi}(N_1(\xi)) + \sum_{i=1}^{2} \partial_{\xi_i} d_{1 i}(\xi) - \overrightarrow{V}(\xi) \cdot \nabla_{\xi}N_1(\xi) - v_1(\xi) =  \widehat{v}_1,
		\\ \label{eq3}
		L_{\xi\xi}(N_{2}(\xi)) + \sum_{i=1}^{2} \big(d_{1 i}\, \partial_{\xi_i}N_1  +
		\partial_{\xi_i}(d_{1 i} \, N_1)\big) + d_{11}(\xi) - \overrightarrow{V} \cdot \nabla_{\xi}N_{2} = \widehat{d}_{11} ,
	\end{gather}
	where $\widehat{v}_1$ and $\widehat{d}_{11}$ are constants  determined below.
	
	Substituting \eqref{regul} in the boundary conditions on $S_\varepsilon^\pm$ and $G_\varepsilon$ and using Taylor's formula for functions $\Phi^\pm$ and $\Psi,$ we obtain the following relations:
	\begin{equation}\label{eq5}
		\mathcal{B}_\xi(N_1(\xi)) + \sum_{i=1}^{2} d_{1 i}(\xi)\, \nu_i(\xi) = - \frac{\Phi^\pm(\mathbf{u}^\pm_0, \mathbf{w}^f_0, x_1, \xi, t)}{\partial_{x_1} w_0(x_1,t)} \quad \text{on} \ \ S^\pm,
	\end{equation}
	\begin{equation}\label{eq6}
		\mathcal{B}_\xi(N_1(\xi)) + \sum_{i=1}^{2} d_{1 i}(\xi)\, \nu_i(\xi) = - \frac{\Psi(\mathbf{w}^f_0, x_1, \xi, t)}{\partial_{x_1} w_0(x_1,t)} \quad \text{on} \ \ \partial T_0,
	\end{equation}
	\begin{equation}\label{eq7}
		\mathcal{B}_\xi(N_{2}(\xi)) + \sum_{i=1}^{2} d_{1 i}(\xi)\, N_1(\xi)\, \nu_i(\xi) = 0 \quad \text{on} \ \ S^\pm \cup \partial T_0,
	\end{equation}
	where $\mathcal{B}_\xi(N) := \sum_{i, j =1}^{2} d_{i j}(\xi)\, \partial_{\xi_j}N(\xi)\, \nu_i(\xi)$ is the conormal derivative, $\nu_i$ is the $i$-th component of the outward unit normal $\boldsymbol{\nu}_\xi$ to the corresponding parts of the boundary of $Y_0,$  and  the vector-functions $\mathbf{u}^\pm_0 = \big(u^\pm_{0,1},\ldots,u^\pm_{0, \mathcal{M}}\big)$ and
	$\mathbf{w}^f_0 = \big(w_{0,1},\ldots,w_{0, \mathcal{M}}\big)$ represent the first terms of the asymptotics of  the solution $\mathbf{u}_\varepsilon = \big(\mathbf{u}^\pm_\varepsilon, \mathbf{u}^f_\varepsilon \big).$ 
	
	\begin{remark}\label{rem-4-1}
		Equalities \eqref{eq5} and \eqref{eq6} are formal relations, as the right-hand sides depend on the variables $x_1$ and $t,$ while the left-hand sides do not.  These equalities will then be adjusted accordingly.
	\end{remark}
	
	Relations \eqref{eq2}, \eqref{eq5} and \eqref{eq6} define the Neumann boundary-value problem for the coefficient $N_1,$ while \eqref{eq3} and \eqref{eq6} define it for the coefficient$N_{2}.$  To find the solvability conditions for them, we consider a model cell problem.
	
	\subsubsection{Model cell problem in $Y_0$}
	
	In the periodicity cell $Y_0,$ we examine the following boundary-value problem:
	\begin{equation}\label{cell-problem}
		\left\{
		\begin{array}{ll}
			L_{\xi\xi}(N(\xi))  - \overrightarrow{V}(\xi) \cdot \nabla_{\xi}N(\xi) = F_0(\xi) + \sum_{i=1}^{2}\partial_{\xi_i} F_i(\xi), &
			\xi\in Y_0,
			\\[3pt]
			\mathcal{B}_{\xi}(N(\xi)) =  \Phi^\pm(\xi) + \sum_{i=1}^{2} F_i(\xi)\, \nu_i(\xi),
			&  \xi \in S^{\pm},
			\\[3pt]
			\mathcal{B}_{\xi}(N(\xi)) =  \Psi(\xi) + \sum_{i=1}^{2} F_i(\xi)\, \nu_i(\xi),
			&  \xi\in \partial T_0,
			\\[3pt]
			\partial^n_{\xi_1}N(\xi)\big|_{\xi_1=0} = \partial^n_{\xi_1}N(\xi)\big|_{\xi_1=1} & \text{on} \ \Gamma, \ \ n\in\{0, 1\},
			\\[3pt]
			\langle N\rangle_{Y_0}= 0, & 
		\end{array}
		\right.
	\end{equation}
	where
	$  \{F_0,\,  F_1,\,  F_2 \} \subset L^2(\omega_0), \ \Phi \in L^2(\partial T_0),$
	$ \Phi^{+}_0 \in L^2(S^{+}),$ $ \Phi^{-}_0 \in L^2(S^{-}).$
	
	\begin{definition}
		A function $N\in H^1_\sharp(Y_0):=\{v\in H^1(Y_0)\colon \   v \ \text{ is 1-periodic in } \, \xi_1 \}$ is called a weak solution to problem \eqref{cell-problem} if for any function $\phi \in H^1_\sharp(\omega_0)$ the following identity holds:
		\begin{equation*}
			\int\limits_{Y_0}\Big(\sum_{i, j=1}^{2} d_{ij}\, \partial_{\xi_i}N \, \partial_{\xi_j}\phi \, + \,   \phi \, \overrightarrow{V}\cdot\nabla_{\xi}N\Big) d\xi =
			-\int\limits_{Y_0} F_0\, \phi \, d\xi + \sum_{i=1}^{2} \int\limits_{Y_0} F_i \, \partial_{\xi_i}\phi \, d\xi + \int\limits_{S^{\pm}} \Phi^{\pm} \, \phi \, dl_{\xi} +  \int\limits_{\partial T_0} \Psi \, \phi \, dl_{\xi}.
		\end{equation*}
	\end{definition}
	\begin{remark}
		Hereinafter the symbol  $\int_{S^{\pm}} \Phi^{\pm}\, dl_{\xi}$ in equations means the sum of two integrals
		$\int_{S^{+}} \Phi^{+}\, dl_{\xi} + \int_{S^{-}} \Phi^{-}\, dl_{\xi}.$
	\end{remark}
	
	\begin{proposition}\label{Prop-4-1}
		Problem \eqref{cell-problem} has a unique weak solution if and only if
		\begin{equation}\label{cell_solution_exists}
			\int_{Y_0} F_0(\xi) d\xi =  \int_{S^{\pm}} \Phi^{\pm}_0(\xi)  \, dl_\xi + \int_{\partial T_0} \Psi(\xi)  \, dl_\xi.
		\end{equation}
	\end{proposition}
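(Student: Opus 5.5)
The plan is to cast the weak formulation as an operator equation on the Hilbert space $H^1_\sharp(Y_0)$ and apply the Fredholm alternative. Define the bilinear form
\[
a(N,\phi) := \int_{Y_0}\Big(\sum_{i,j=1}^{2} d_{ij}\,\partial_{\xi_j}N\,\partial_{\xi_i}\phi + \phi\,\overrightarrow{V}\cdot\nabla_\xi N\Big)\,d\xi,
\]
which is continuous on $H^1_\sharp(Y_0)$ because $d_{ij}$ and $\overrightarrow{V}$ are smooth on $\overline{Y}_0$ (assumption A1 and the smoothness of $p$ in A2). The right-hand side of the weak identity defines a bounded linear functional $\ell$, by the trace theorem on $S^\pm\cup\partial T_0$. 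Since $a(N,N)+\lambda\|N\|^2_{L^2(Y_0)}$ is coercive for large $\lambda$ (uniform ellipticity together with Young's inequality for the convective term), the compact embedding $H^1_\sharp(Y_0)\hookrightarrow L^2(Y_0)$ shows that the associated operator $A$ is Fredholm of index zero. The Fredholm alternative then gives solvability of $AN=\ell$ if and only if $\ell$ annihilates $\ker A^*$. Uniqueness is understood modulo $\ker A$ and is then fixed by the normalization $\langle N\rangle_{Y_0}=0$.

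The key step is to identify $\ker A$ and $\ker A^*$, and here the structure of $\overrightarrow{V}$ enters. Integrating by parts in the convective term gives
\[
\int_{Y_0}\phi\,\overrightarrow{V}\cdot\nabla N\,d\xi = -\int_{Y_0} N\,\overrightarrow{V}\cdot\nabla\phi\,d\xi - \int_{Y_0}(\nabla\cdot\overrightarrow{V})\,N\phi\,d\xi + \int_{\partial Y_0}(\overrightarrow{V}\cdot\boldsymbol{\nu})\,N\phi\,dl_\xi .
\]
The volume term with $\nabla\cdot\overrightarrow{V}=\Delta p$ vanishes. On $S^\pm\cup\partial T_0$ we have $\overrightarrow{V}\cdot\boldsymbol{\nu}=\partial_{\boldsymbol{\nu}}p=0$. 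On $\Gamma$ the contributions cancel by periodicity, since $\partial_{\xi_1}p$ equals $\mathrm{v}_0(\xi_2)$ on both vertical sides with opposite normals. Hence the convective part is skew-symmetric, so $a(N,N)=\int_{Y_0}\mathbb{D}\nabla N\cdot\nabla N\,d\xi$. Consequently $AN=0$ forces $\nabla N=0$, i.e. $N$ is constant, using connectedness of $Y_0$ (the holes in $T_0$ are strictly interior and non-tangent). The adjoint form is $a^*(\phi,N)=a(N,\phi)$, which is the same diffusion with convection $-\overrightarrow{V}$, so by the identical argument $\ker A^*$ consists of the constants.

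Testing the weak identity with $\phi\equiv1$ makes the left side and the $F_i$-terms vanish, and yields exactly \eqref{cell_solution_exists}; this gives necessity. Sufficiency is the Fredholm alternative, since orthogonality to the one-dimensional $\ker A^*$ is precisely this identity. Among solutions, which differ by constants, exactly one satisfies $\langle N\rangle_{Y_0}=0$. I expect the main obstacle to be the careful justification of the boundary cancellation on $\Gamma$, i.e. that $\overrightarrow{V}\cdot\boldsymbol{\nu}$ with periodic traces of $N\phi$ integrates to zero. This requires that $p$ is smooth up to $\Gamma$ and that $\nabla p$ is genuinely $1$-periodic in $\xi_1$ (including $\partial_{\xi_2}p$), which I would deduce from the problem \eqref{potential0} and the evenness and periodicity assumptions on $h_\pm$.
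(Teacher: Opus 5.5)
Your proposal is correct and takes the same route as the paper. Both apply the Fredholm alternative and use $\nabla\cdot\overrightarrow{V}=0$ together with $\overrightarrow{V}\cdot\boldsymbol{\nu}=0$ on $S^\pm\cup\partial T_0$ to show that the homogeneous adjoint problem has only constant solutions, so that orthogonality to $\rho=1$ yields \eqref{cell_solution_exists}. Your skew-symmetry energy argument just spells out the paper's ``it is easy to see'' step.

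The obstacle you anticipate is not one. On $\Gamma$ the normal is $(\pm1,0)$, so only $\partial_{\xi_1}p$ enters the boundary term, and \eqref{potential0} prescribes it to equal $\mathrm{v}_0(\xi_2)$ on both vertical sides. Hence no periodicity of $\partial_{\xi_2}p$ is needed.
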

	\begin{proof}
		Applying the Fredholm alternative to problem \eqref{cell-problem}, the solvability condition is given by the relation
		\begin{equation}\label{cell_solution_exists1}
			\int_{Y_0} F_0(\xi) \, \rho(\xi) \, d\xi = \sum_{i=1}^{2} \int\limits_{Y_0} F_i(\xi)\,\partial_{\xi_i}\rho(\xi)\, d\xi + \int_{S^{\pm}} \Phi^{\pm}_0(\xi)  \, \rho(\xi) \, dl_\xi + \int_{\partial T_0} \Psi(\xi)  \, \rho(\xi) \, dl_\xi.
		\end{equation}
		where,  $\rho \in H^1_\sharp(Y_0)$ represents any solution to the corresponding homogeneous adjoint problem
		\begin{equation}\label{adjoint-cell-problem2}
			\left\{
			\begin{array}{ll}
				L_{\xi\xi}(\rho(\xi))  + \nabla_{\xi}\big(\rho(\xi)\, \overrightarrow{V}(\xi)\big) = 0, &
				\xi\in Y_0,
				\\[3pt]
				\mathcal{B}_{\xi}(\rho(\xi))  + \rho(\xi)\, \overrightarrow{V}(\xi) \cdot \boldsymbol{\nu}_\xi = 0,
				&  \xi \in S^{\pm} \cup  \partial T_0,
				\\[3pt]
				\partial^n_{\xi_1}\rho(\xi)\big|_{\xi_1=0} = \partial^n_{\xi_1}\rho(\xi)\big|_{\xi_1=1} & \text{on} \ \Gamma, \ \ n\in\{0, 1\}.
			\end{array}
			\right.
		\end{equation}
		Based on the assumptions for the vector-function $\overrightarrow{V}$ (see $\bf{A2}$), this  is equivalent to solving the problem
		\begin{equation}\label{adjoint-cell-problem1}
			\left\{
			\begin{array}{ll}
				L_{\xi\xi}(\rho(\xi))  + \overrightarrow{V}(\xi) \cdot \nabla_{\xi}\rho(\xi) = 0, &
				\xi\in Y_0,
				\\[3pt]
				\mathcal{B}_{\xi}(\rho(\xi)) = 0,
				&  \xi \in S^{\pm} \cup  \partial T_0,
				\\[3pt]
				\partial^n_{\xi_1}\rho(\xi)\big|_{\xi_1=0} = \partial^n_{\xi_1}\rho(\xi)\big|_{\xi_1=1} & \text{on} \ \Gamma, \ \ n\in\{0, 1\}.
			\end{array}
			\right.
		\end{equation}
		It is easy to see that every solution to problem \eqref{adjoint-cell-problem1} must be a constant function. Therefore, the solvability condition \eqref{cell_solution_exists1} simplifies to \eqref{cell_solution_exists}, where $\rho =1.$
	\end{proof}
	
	By writing out the solvability conditions for the problems involving the coefficients $N_1$ and $N_2$, we obtain the following relations:
	\begin{equation*}
		\widehat{v}_1 = - \langle v_1 \rangle_{Y_0} - \frac{\widehat{\Phi}^+}{\partial_{x_1}w_0}  - \frac{\widehat{\Phi}^-}{\partial_{x_1}w_0} - \frac{\widehat{\Psi}}{\partial_{x_1}w_0},
	\qquad 	\widehat{d}_{11} = \Big\langle d_{11} + \sum_{i=1}^{2} d_{1 i}\, \partial_{\xi_i}N_1\Big\rangle_{Y_0},
	\end{equation*}
	where
	\begin{equation}\label{eq10}
		\widehat{\Phi}^\pm(\mathbf{u}^\pm_0, \mathbf{w}^f_0, x_1, t) := \frac{1}{\upharpoonleft\!\! Y_0 \!\!\upharpoonright_2} \int_{S^\pm} \Phi^\pm(\mathbf{u}^\pm_0, \mathbf{w}^f_0, x_1, \xi, t)\, dl_\xi,
	\end{equation}
	\begin{equation}\label{eq11}
		\widehat{\Psi}(\mathbf{w}^f_0, x_1, t) := \frac{1}{\upharpoonleft\!\! Y_0 \!\!\upharpoonright_2} \int_{\partial T_0} \Psi(\mathbf{w}^f_0, x_1, \xi, t)\, dl_\xi.
	\end{equation}
	
	Assuming that the coefficient $N_1$ has been determined, the right-hand side of \eqref{eq1} can now be rewritten as follows:
	\begin{equation}\label{eq12}
		\partial_t w_0 +  \langle v_1 \rangle_{Y_0} \, \partial_{x_1} w_0 +  \widehat{\Phi}^+
		+ \widehat{\Phi}^- + \widehat{\Psi} + \varepsilon\, N_1\, \partial^2_{t x_1}w_0 
		- \varepsilon\, \widehat{d}_{11} \, \partial^2_{x^2_1} w_0 + \mathcal{O}(\varepsilon^2).
	\end{equation}
	Therefore, if the component $w_{0,k}$ of the vector-function $\mathbf{w}^f_0$ satisfies the equation
	\begin{equation}\label{eq13}
		\partial_t w_{0,k} 
		+ \langle v_1 \rangle_{Y_0} \, \partial_{x_1} w_{0,k} +  \widehat{\Phi}^+_k(\mathbf{u}^+_0, \mathbf{w}^f_0, x_1, t)
		+ \widehat{\Phi}^-_k(\mathbf{u}^-_0, \mathbf{w}^f_0, x_1, t) + \widehat{\Psi}_k(\mathbf{w}^f_0, x_1, t) = 0 \quad \text{in} \ \mathcal{I}^f\times (0,T),
	\end{equation}
	then ansatz \eqref{regul} leaves a small residue of order $\mathcal{O}(\varepsilon)$ in \eqref{eq1}. 
	
	\begin{proposition}\label{Prop-4-2}
		Coefficient $\hat{\mathrm{v}} := \langle v_1 \rangle_{Y_0}$ is positive.
	\end{proposition}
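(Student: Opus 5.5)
We need to show $\langle v_1\rangle_{Y_0} = |Y_0|^{-1}\int_{Y_0}\partial_{\xi_1}p\,d\xi > 0$. The plan is to reduce the area integral to the inflow flux through the vertical side $\Gamma$, using the incompressibility of $\overrightarrow{V}$ and the no-flux conditions in \eqref{potential0}.

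The key identity is $v_1 = \partial_{\xi_1} p = \nabla_\xi\cdot(\xi_1 \nabla_\xi p) - \xi_1\Delta_\xi p$. Since $\Delta_\xi p = 0$ in $Y_0$, the divergence theorem gives
$$
\int_{Y_0} v_1\, d\xi = \int_{\partial Y_0} \xi_1\, \partial_{\boldsymbol{\nu}_\xi} p \, dl_\xi .
$$
On $S^\pm\cup\partial T_0$ the normal derivative vanishes by \eqref{potential0}. On the left segment $\{\xi_1=0\}$ the weight $\xi_1$ vanishes. On the right segment $\{\xi_1 = 1\}$ the outward normal is $\vec{\boldsymbol{e}}_1$, and we have $\partial_{\boldsymbol{\nu}_\xi}p = \partial_{\xi_1}p = \mathrm{v}_0(\xi_2)$. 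Because $h_\pm=1$ near $0$ and $h_\pm$ is $1$-periodic, the right segment is $\{1\}\times(-1,1)$. Hence
$$
\langle v_1\rangle_{Y_0} = \frac{1}{\upharpoonleft\!\! Y_0 \!\!\upharpoonright_2}\int_{-1}^{1} \mathrm{v}_0(\xi_2)\, d\xi_2 > 0,
$$
which is positive since $\mathrm{v}_0$ is positive on $[-1,1]$.

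The only point that needs care is justifying the divergence theorem, i.e. the regularity of $p$ up to the boundary. The paper asserts that $p$ is smooth on $\overline{Y}_0$. To support this, we note that the boundary pieces are $C^{2,\mu}$ and that the holes $T_0$ lie strictly inside $Y$. At the corners where $\Gamma$ meets $S^\pm$, the boundary is flat near $\xi_1\in\{0,1\}$, because $h_\pm\equiv1$ there. The compatibility condition $\mathrm{v}_0'(\pm1)=0$ allows $p$ to be reflected evenly across $\xi_2=\pm1$, which gives regularity at those corners. Alternatively, one can avoid pointwise regularity and use the weak formulation directly: take the test function $\phi=\xi_1$ in the weak form of \eqref{potential0} with prescribed Neumann data $\mathrm{v}_0$ on the two vertical sides, which gives $\int_{Y_0}\partial_{\xi_1}p\,d\xi$ equal to the boundary term $\int_{-1}^1 \mathrm{v}_0\,d\xi_2$. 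This weak route is what I would actually write, since it only needs $p\in H^1(Y_0)$.
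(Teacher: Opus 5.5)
Your proposal is correct and follows the paper's own argument: multiply $\Delta_\xi p=0$ by $\xi_1$ and integrate by parts over $Y_0$, so that only the right vertical side contributes, giving $\int_{Y_0}\partial_{\xi_1}p\,d\xi=\int_{-1}^{1}\mathrm{v}_0(\xi_2)\,d\xi_2>0$. Your weak-formulation variant with test function $\phi=\xi_1$ is a sound addition, since it needs only $p\in H^1(Y_0)$ and so sidesteps the corner-regularity question that the paper leaves implicit.
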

	\begin{proof}
		Multiplying the differential equation of problem \eqref{potential0} by  $\xi_1,$  integrating over $Y_0,$ and then integrating  by part, we derive  
		$$
		\int_{Y_0} \partial_{\xi_1}p(\xi)\,d\xi = \int_{-1}^{1} \mathrm{v}_0(\xi_2)\, d\xi_2.
		$$
		Considering assumption $\mathbf{A2},$ it follows from the previous equality that
		\begin{equation}\label{avarage conv flow}
			\langle v_1 \rangle_{Y_0} = \frac{1}{\upharpoonleft\!\! Y_0 \!\!\upharpoonright_2}\int_{-1}^{1} \mathrm{v}_0(\xi_2)\, d\xi_2 > 0.
		\end{equation}
	\end{proof}
	
	Thus, the first terms $\{w_{0,k}\}_{k=1}^\mathcal{M}$ in the asymptotics of $\mathbf{u}^f_\varepsilon$ must be solutions to the mixed problem for a first-order semi-linear hyperbolic system  
	\begin{equation}\label{hyperbolic-system}
		\left\{
		\begin{array}{ll}
			\partial_t w_{0,k}(x_1,t)  + \hat{\mathrm{v}} \, \partial_{x_1} w_{0,k}(x_1,t) = \widehat{F}_k(\mathbf{u}^+_0, \mathbf{u}^-_0, \mathbf{w}^f_0, x_1, t), &
			(x_1,t) \in \mathcal{I}^f\times (0,T),
			\\[3pt]
			w_{0,k}(0,t)= 0, \ \ t\in [0,T], \qquad w_{0,k}(x_1,0)= 0, \ \ x_1\in [0, \ell],
			& k\in \{1,\ldots,\mathcal{M}\}.
		\end{array}
		\right.
	\end{equation}
	Here,
	\begin{equation}\label{hat F}
		\widehat{F}_k(\mathbf{u}^+_0, \mathbf{u}^-_0, \mathbf{w}^f_0, x_1, t) := -  \widehat{\Phi}^+_k(\mathbf{u}^+_0, \mathbf{w}^f_0, x_1, t)
		- \widehat{\Phi}^-_k(\mathbf{u}^-_0, \mathbf{w}^f_0, x_1, t) - \widehat{\Psi}_k(\mathbf{w}^f_0, x_1, t),
	\end{equation}
	where the functions $\widehat{\Phi}^\pm_k$ and  $\widehat{\Psi}_k$ are defined by formulas \eqref{eq10} and \eqref{eq11}, respectively,  
	and the boundary conditions and initial conditions are taken from the original problem \eqref{probl}.

	Now we adjust the boundary conditions in the problems to determine the coefficients $\{N_1^{(k)}\}_{k=1}^\mathcal{M}$ (see Remark~\ref{rem-4-1}). For each $k,$ the coefficient $N_1^{(k)} \in H^1_\sharp(Y_0)$ is the unique solution to the problem
	\begin{equation}\label{problem-N-1}
		\left\{
		\begin{array}{ll}
			L^k_{\xi\xi}(N_1^{(k)})  - \overrightarrow{V} \cdot \nabla_{\xi}N_1^{(k)} = v_1(\xi) - \hat{\mathrm{v}}
			- \sum_{i=1}^{2} \partial_{\xi_i} d^k_{1 i}(\xi), & \xi \in Y_0, 
			\\[3pt]
			\mathcal{B}^k_{\xi}(N_1^{(k)}) =  - \sum_{i=1}^{2}d^k_{1 i}(\xi)\, \nu_i(\xi),
			&  \xi \in S^{\pm} \cup \partial T_0,
			\\[3pt]
			\langle N_1^{(k)}\rangle_{Y_0}= 0, & 
		\end{array}
		\right.
	\end{equation}
	where $L^k_{\xi\xi}(N) := \sum_{i, j =1}^{2} \partial_{\xi_i}\big(d^k_{i j}\, \partial_{\xi_j}N\big),$
	$\mathcal{B}^k_\xi(N) := \sum_{i, j =1}^{2} d_{i j}\, \partial_{\xi_j}N\, \nu_i.$ Based on Proposition~\ref{Prop-4-1}, such a solution does indeed exist.

	The coefficient $N^{(k)}_2\in H^1_\sharp(Y_0)$ is uniquely determined as the solution to the problem 
	\begin{equation}\label{problem-N-2}
		\left\{
		\begin{array}{ll}
			L^k_{\xi\xi}(N_2^{(k)}) - \overrightarrow{V} \cdot \nabla_{\xi}N^{(k)}_{2} = \Big(\widehat{d}^{\,k}_{11} - d^k_{11}
			- \sum_{i=1}^{2}d^k_{1 i}\,\partial_{\xi_i}N^{(k)}_1\Big) - \sum_{i=1}^{2}\partial_{\xi_i}(d^k_{1 i}\, N^{(k)}_1)\big), &
			\xi\in Y_0,
			\\[3pt]
			\mathcal{B}^k_{\xi}(N_2^{(k)}) =  - \sum_{i=1}^{2}d^k_{1 i}\,N_1^{(k)}\,\nu_i,
			&  \xi \in S^{\pm} \cup \partial T_0,
			\\[3pt]
			\langle N_2^{(k)}\rangle_{Y_0}= 0, & 
		\end{array}
		\right.
	\end{equation}
	where 
	\begin{equation}\label{hom-coeff}
		\widehat{d}^{\,k}_{11} = \big\langle d^k_{11} + \sum_{i=1}^{2}d^k_{1 i}\,\partial_{\xi_i}N^{(k)}_1 \big\rangle_{Y_0}, \quad k\in \{1,\ldots,\mathcal{M}\}.
	\end{equation}
	
	\subsection{Analysis in the bulk domains $\Omega^+_\varepsilon$ and $\Omega^-_\varepsilon$}\label{Par-4-2} 
	
	It is clear that it is sufficient to do this in one of these regions.   
	To achieve this, we will follow the approach used in \cite{Mel-1999,Mel-Naz-1997}, where asymptotic approximations for solutions to boundary value problems in domains with highly oscillating boundaries were constructed.
	
	We are looking for two types of expansions: the  outer one is
	\begin{equation}\label{exp-1}
		\mathbf{u}^+_\varepsilon(x,t) = \mathbf{u}^+_0(x,t) + \varepsilon \mathbf{u}^+_1(x,t) + \ldots  \quad \text{in} \ \ \Omega^+_\varepsilon
	\end{equation}
	outside a small neighborhood of the oscillating boundary $S^+_\varepsilon,$ and  the inner one 
	\begin{equation}\label{outer-1}
		\mathbf{u}^+_\varepsilon(x,t) = \mathbf{u}^+_0(x_1,0,t) + \varepsilon \Big(Z^+_1\big(\tfrac{x_1}{\varepsilon}, 
		\tfrac{x_2- \varepsilon}{\varepsilon}\big)\, \partial_{x_1}\mathbf{u}^+_0(x_1,0,t) + Z^+_2\big(\tfrac{x_1}{\varepsilon}, 
		\tfrac{x_2- \varepsilon}{\varepsilon}\big)\, \partial_{x_2}\mathbf{u}^+_0(x_1,0,t)\Big) +  \ldots
	\end{equation}
	is valid near  $S^+_\varepsilon.$ Here coefficients $Z^+_1(\xi_1, \xi_2)$ and $Z^+_2(\xi_1, \xi_2),$ where $\xi_1 = \frac{x_1}{\varepsilon}$ and $\xi_2 = \frac{x_2 - \varepsilon}{\varepsilon},$ are $1$-periodic in $\xi_1$ and will be determined below.
	
	Using Taylor's formula for the vector-function $\mathbf{F}^+=(F^+_1,\ldots,F^+_\mathcal{M}),$ it is easy to write down relations for the leading term of  \eqref{exp-1}:
	\begin{equation}\label{relations+}
		\left\{\begin{array}{rcll}
			\partial_t \mathbf{u}^+_0 -   \mathbb{D}^+ \Delta \mathbf{u}^+_0
			& = & \mathbf{F}^+(\mathbf{u}^+_0, x,t) + \boldsymbol{\mathfrak{F}}^+(x,t) & (x,t) \in \Omega^+ \times (0, T),
			\\[2mm]
			\mathbf{u}^+_0  & = & 0 & \text{on} \ \ \big(\partial\Omega^+  \setminus \mathcal{I}^f \big) \times (0, T),
		\end{array}\right.
	\end{equation}
	where $\mathbb{D}^+$ is the diagonal matrix $\text{diag}(D^+_1,\ldots,D^+_\mathcal{M}),$ $\boldsymbol{\mathfrak{F}}^\pm= (\mathfrak{f}_1^\pm,\ldots,\mathfrak{f}_\mathcal{M}^\pm).$
	
	To derive  boundary relations for the components of  $\mathbf{u}^+_0 = (u^+_{0,1},\ldots,u^+_{0,\mathcal{M}})$ on the interval $\mathcal{I}^f,$  we substitute the inner ansatz \eqref{outer-1} into the corresponding differential equations of problem \eqref{probl} in $\Omega^+_\varepsilon$ and into the boundary conditions on $S^+_\varepsilon.$ We  perform this substitution for each component of the vector expansion \eqref{outer-1}, omitting the "$+$" and $k$ indices.   As we will see, the coefficients $Z^+_1$ and $Z^+_2$ are the same for all components. 
		Collecting terms at the same powers of $\varepsilon,$ we obtain at $\varepsilon^{-1}$ in  $\Omega^+_\varepsilon$ the following sum:
	\begin{equation}\label{sum-Z}
		- D \Big(\Delta_\xi Z^+_1(\xi) \, \partial_{x_1}{u}_0(x_1,0,t) + \Delta_\xi Z^+_2(\xi) \, \partial_{x_2}{u}_0(x_1,0,t)\Big),
	\end{equation}
	where $\Delta_\xi$ is the Laplace operator with respect to variables $\xi_1 = \frac{x_1}{\varepsilon}$ and $\xi_2 = \frac{x_2 - \varepsilon}{\varepsilon}.$ Since the right-hand side of  the corresponding differential equation is of order $\mathcal{O}(1),$ the sum \eqref{sum-Z} must be equal to zero, i.e., $\Delta_\xi Z^+_1(\xi) = \Delta_\xi Z^+_2(\xi) = 0$ in the domain $\Pi^+_0 :=\{\xi=(\xi_1,\xi_2)\colon \ \xi\in (0,1), \ \xi_2 > h_+(\xi_1) - 1\}$ (see Fig.~\ref{fig-3}). 
	\begin{figure}[htbp]
		
		\vspace{-0.3cm}
		\begin{center}
			\includegraphics[width=5cm]{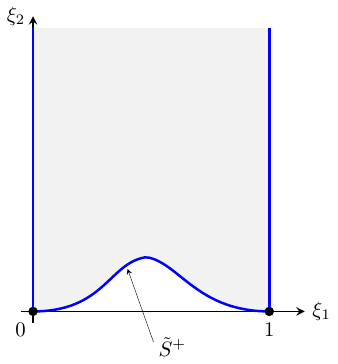}
		\end{center}
		
			\vspace{-0.3cm}
		\caption{Sketch of the domain $\Pi^+_0$ }\label{fig-3}
	\end{figure}

	Substituting \eqref{outer-1} and \eqref{regul} into the boundary condition $D \, \nabla u_{\varepsilon} \cdot \boldsymbol{\nu}_\varepsilon =  \Upsilon\big(\mathbf{u}^+_\varepsilon, \mathbf{u}^f_\varepsilon, \tfrac{x}{\varepsilon}, x_1, t \big)$ on $S^+_\varepsilon,$ 
	we get the following relation at order $\varepsilon^0:$
	\begin{multline}\label{bound-cond-1}
		- D \, \Big(\partial_{\boldsymbol{\nu}_\xi}Z^+_1(\xi) + \nu_1(\xi)\Big)\Big|_{\xi \in \tilde{S}^+} \, \partial_{x_1}{u}_0(x_1,0,t)
		- D \, \partial_{\boldsymbol{\nu}_\xi}Z^+_2(\xi)\Big|_{\xi \in \tilde{S}^+}\,  \partial_{x_2}{u}_0(x_1,0,t)
		\\
		=  \widetilde{\Upsilon}\big(\mathbf{u}^+_0(x_1,0,t), \mathbf{w}^f_0(x_1,t), x_1, t \big), \quad (x_1, t) \in \mathcal{I}^f \times (0, T),
	\end{multline}
	where $\tilde{S}^+:= \{\xi\colon\ \xi_2=h_+(\xi_1) -1, \ \xi_1 \in (0,1) \}$ is the lower part of $\partial\Pi^+_0$ (see Fig.~\ref{fig-3}),
	$\partial_{\boldsymbol{\nu}_\xi}$ is the derivative along the  outward unit normal $\boldsymbol{\nu}_\xi= (\nu_1(\xi), \nu_2(\xi))$ to the boundary of the domain $\Pi^+_0$ (hence the minus sign on the left-hand side), and
	\begin{equation}\label{eq-14}
		\widetilde{\Upsilon}\big(\mathbf{u}^+_0(x_1,0,t), \mathbf{w}^f_0(x_1,t), x_1, t \big) := 
		\frac{1}{\upharpoonleft\!\! {S}^+\!\!\upharpoonright_1}
		\int_{S^+}  \Upsilon\big(\mathbf{u}^+_0(x_1,0,t), \mathbf{w}^f_0(x_1,t), \xi, x_1, t \big) \,  dl_\xi .
	\end{equation}
	The right-hand side of \eqref{bound-cond-1} follows from \eqref{Lip-ineq} and the inequality
	\begin{multline}\label{eq-15}
		\bigg|\int\limits_{S^+_\varepsilon}\Big(\Upsilon\big(\mathbf{u}^+_0(x_1,0,t), \mathbf{w}^f_0(x_1,t), \tfrac{x}{\varepsilon}, x_1, t \big)
		\, \phi \, dl_x    
		\\
		- \upharpoonleft\!\! {S}^+\!\!\upharpoonright_1 \int_{0}^{\ell} \widetilde{\Upsilon}\big(\mathbf{u}^+_0(x_1,0,t), \mathbf{w}^f_0(x_1,t), x_1, t \big) \, \phi(x_1, 0) \, dx_1\bigg| \le c_1  \, \varepsilon^{\frac{1}{2}} \|\phi\|_{H^1(\Omega^+)} 
	\end{multline}
	for any function $\phi \in H^1(\Omega^+),$ which can be proved in the same way as, for example, the analogous inequality in Theorem~4.1 of \cite[\S 4, Chapt.~3]{Ole-Ios-Sha-1991}.
	
	To eliminate the dependence on the microvariables $\xi$ in \eqref{bound-cond-1}, we impose the following conditions: 
	\begin{equation}\label{eq-15+}
		\Big(\partial_{\boldsymbol{\nu}_\xi}Z^+_1(\xi) + \nu_1(\xi)\Big)\Big|_{\xi \in \tilde{S}^+} = b_1 \quad \text{and} \quad 
		\partial_{\boldsymbol{\nu}_\xi}Z^+_2(\xi)\Big|_{\xi \in \tilde{S}^+} = b_2,
	\end{equation}
	where the constants $b_1$ and $b_2$ are determined below. 
	 
	Thus, we have obtained relations that form boundary-value problems for $Z^+_1$ and $Z^+_2$ in  domain $\Pi^+_0.$

	\subsubsection{Model cell problem in $\Pi^+_0$}
	
	First, we consider following model boundary-value problem:
	\begin{equation}\label{cell-problem-Pi}
		\left\{
		\begin{array}{ll}
			\Delta_{\xi}Z(\xi)  = f(\xi) , & \xi\in\Pi^+_0,
			\\[3pt]
			\partial_{\boldsymbol{\nu}_\xi}Z(\xi) =  g(\xi),
			&  \xi \in \tilde{S}^+,
			\\[3pt]
			\partial^n_{\xi_1}Z(\xi)\big|_{\xi_1=0} = \partial^n_{\xi_1}Z(\xi)\big|_{\xi_1=1}, & \xi_2\in (0, +\infty), \ \ n\in\{0, 1\}.
		\end{array}
		\right.
	\end{equation}
	For this problem, we introduce an energy space of functions for which the Dirichlet integral is finite and constant functions belong to it.
	Let  $\widehat{C}^{\infty}_0\big(\overline{\Pi^+_0}\big)$ be a space of infinitely   differentiable functions in $\overline{\Pi^+_0}$ that satisfy the   periodical conditions of problem \eqref{cell-problem-Pi}  and  vanish at sufficiently large values of   $\xi_2$,
	i.e.,     
	$$
	\forall u\in
	\widehat{C}^{\infty}_0\big(\overline{\Pi^+_0}\big)\quad\exists\, R>0\quad\forall \xi \in \overline{\Pi^+_0} \quad \xi_2 \ge R\colon \  u(\xi)=0 .
	$$
	By $ \mathcal{H},$ we denote the completion of the space $\widehat{C}^{\infty}_0\big(\overline{\Pi^+_0}\big)$ with respect to  the norm
	$$
	\|u\|_{\mathcal{H}}=\left(\|\nabla_{\xi}u \|^2_{L^2(\Pi^+_0)}+\|\rho   u\|^2_{L^2(\Pi^+_0)}\right)^{1/2}\, ,
	$$
	where  $\rho(\xi_2)=(1+ \xi_2)^{-1} ,\ \xi_2 \in (0, +\infty).$  
	
	A  function $Z$ is called a weak solution to  problem \eqref{cell-problem-Pi}
	if, for all functions $u\in\mathcal{H},$  it holds that 
	\begin{equation}\label{eq-16}
		\int_{\Pi^+_0}\nabla_{\xi}Z\cdot\nabla_{\xi}u\,d\xi =    \int_{\tilde{S}^+} g\, u\, dl_\xi - \int_{\Pi^+_0}f \, u\,d\xi  .  
	\end{equation}
	
	In the same way as in \cite[\S 4.2]{Mel-1999}, we prove the following lemma.
	\begin{lemma}\label{lemma-4-1}
		Let $\rho^{-1} f\in L^2(\Pi^+), \ g \in L^2(\tilde{S}^+),$ and 
		\begin{equation}\label{eq-17}
			\int_{\tilde{S}^+} g\, dl_\xi = \int_{\Pi^+_0}f \, d\xi .
		\end{equation}
		
		Then there exists a unique solution $Z \in \mathcal{H}$ to problem \eqref{cell-problem-Pi}.
		In addition,
		\begin{itemize}
			\item  if the functions $f$ and $g$ are even or odd in $\xi_1$ with respect to $\frac12,$ then the solution inherits the corresponding symmetry;
			\item 
			if $\exp(\delta\, \xi_2)\, f \in L^2(\Pi^+)$ with some $\delta >0,$ then the solution has the differential asymptotics:
			\begin{equation}\label{eq-18}
				Z(\xi) = c_0 + \mathcal{O}\big(\exp(-\delta_0 \, \xi_2) \big) \quad \text{as} \ \ \xi_2 \to +\infty \ \ (\delta_0 > 0).  
			\end{equation}
		\end{itemize}
	\end{lemma}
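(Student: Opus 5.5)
Since $\mathcal{H}$ contains the constants, the plan is to prove existence by Lax--Milgram/Riesz on a quotient space, then obtain the symmetry from uniqueness and the exponential asymptotics from Fourier analysis in $\xi_1$ above the rough boundary. The first step is a weighted Hardy-type inequality on $\Pi^+_0$:
\[
\|\rho\, u\|_{L^2(\Pi^+_0)} \le C\big(\|\nabla_\xi u\|_{L^2(\Pi^+_0)} + |\langle u\rangle_{K}|\big), \qquad u\in\mathcal{H},
\]
where $K$ is a fixed bounded subdomain of $\Pi^+_0$ adjacent to $\tilde S^+$, for instance $K:=\Pi^+_0\cap\{\xi_2<R_0\}$. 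I would prove it for $u\in\widehat{C}^\infty_0(\overline{\Pi^+_0})$ by combining the one-dimensional Hardy inequality $\int_{R_0}^\infty (1+s)^{-2}|v(s)-v(R_0)|^2ds\le 4\int_{R_0}^\infty|v'|^2ds$, applied on each vertical line, with the Poincaré inequality on $K$. The same argument, together with the trace theorem on $K$, gives $\|u\|_{L^2(\tilde S^+)}\le C(\|\nabla_\xi u\|+|\langle u\rangle_K|)$.

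Next I work in the closed subspace $\mathcal{H}_0:=\{u\in\mathcal{H}:\langle u\rangle_K=0\}$. There the Dirichlet form is coercive, because the inequality above makes $\|\nabla_\xi u\|_{L^2}$ an equivalent norm. The functional $\ell(u):=\int_{\tilde S^+}g\,u\,dl_\xi-\int_{\Pi^+_0}f\,u\,d\xi$ is bounded: the first term by the trace estimate, and the second by $|\int fu|\le\|\rho^{-1}f\|\,\|\rho u\|$. Riesz then gives a unique $Z_0\in\mathcal{H}_0$ satisfying \eqref{eq-16} for all $u\in\mathcal{H}_0$. Every $u\in\mathcal{H}$ splits as $u_0+c$ with $u_0\in\mathcal{H}_0$ and $c$ constant. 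The identity for the constant test function $u\equiv1$ holds precisely because of the compatibility condition \eqref{eq-17}, since the left side of \eqref{eq-16} vanishes and the right side equals $\int g-\int f=0$. Hence $Z_0$ is a weak solution. It is unique up to an additive constant, since the difference of two solutions has zero Dirichlet integral. So "unique solution" has to be read modulo constants or with a normalization; I would fix $\langle Z\rangle_K=0$, or alternatively normalize via the limit constant $c_0$.

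For the symmetry, assume $f,g$ are even or odd about $\xi_1=\tfrac12$. The evenness of $h_+$ together with its $1$-periodicity makes $\Pi^+_0$ symmetric under $\xi_1\mapsto1-\xi_1$, and this reflection preserves the periodicity conditions. The function $\pm Z(1-\xi_1,\xi_2)$ therefore solves the same problem, and uniqueness (with a symmetric normalization) gives the claimed symmetry.

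The main work is the asymptotics \eqref{eq-18}. Choose $R_0>\max h_+-1$ so that the strip $\{\xi_2>R_0\}$ is a half-cylinder with periodic side conditions. There expand
\[
Z(\xi)=\sum_{n}z_n(\xi_2)e^{2\pi i n\xi_1},
\]
where the coefficients satisfy $z_n''-4\pi^2n^2z_n=f_n$. Finiteness of the Dirichlet integral excludes the growing solutions. For $n\neq0$ this leaves $z_n=a_ne^{-2\pi|n|\xi_2}$ plus a particular solution given by the Green's-function convolution with $f_n$, which decays like $e^{-\min(\delta,2\pi)\xi_2}$ (minus a little) thanks to $e^{\delta\xi_2}f\in L^2$. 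For $n=0$, $z_0''=f_0$ with $z_0'\in L^2$ forces $z_0'(\xi_2)=-\int_{\xi_2}^\infty f_0$, which is exponentially small. So $z_0\to c_0$ exponentially.

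What remains is to upgrade these $L^2$-type statements to pointwise and differential bounds. For that I would apply interior elliptic estimates in the unit cylinders $\{k<\xi_2<k+2\}$ to $Z-c_0$, obtaining $|\partial^\alpha(Z-c_0)|=\mathcal{O}(e^{-\delta_0\xi_2})$ for any $\delta_0<\min(\delta,2\pi)$. If derivatives of all orders are needed, this also requires local regularity of $f$.
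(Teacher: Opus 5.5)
Your proposal is correct and follows the standard route that the paper relies on. The paper itself gives no proof and only points to \cite[\S 4.2]{Mel-1999}; that route is: Riesz representation in the weighted space $\mathcal{H}$ via a Hardy-type inequality plus the compatibility condition, symmetry from uniqueness under $\xi_1\mapsto 1-\xi_1$, and Fourier analysis in the half-cylinder above $\tilde S^+$ for exponential stabilization to $c_0$.

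Both of your caveats are also right. Because constants lie in $\mathcal{H}$, uniqueness holds only up to an additive constant; a reflection-invariant normalization fixes it, and this is what gives the odd $Z^+_1$ its value $c_0=0$. Differentiated asymptotics require extra local regularity of $f$, which costs nothing in the paper's applications, where $f$ is zero or smooth with compact support.
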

	
	The coefficient $Z^+_1$ is defined as a solution to problem \eqref{cell-problem-Pi} with $f=0$ and $g(\xi) = - \nu_1(\xi).$ 
	Since the component $\nu_1$ of the normal is  odd  in $\xi_1$ with respect to $\frac{1}{2}$ -- a consequence of  the evenness of the function $h_+$ --  the solvability condition \eqref{eq-17} is satisfied. By Lemma~\ref{lemma-4-1}, it follows that  $Z^+_1$ is also odd  in $\xi_1$ with respect to $\frac{1}{2}.$ Consequently, the constants $b_1$ and $c_0$ in \eqref{eq-15+} and \eqref{eq-18}, respectively, vanish. 
	
	To derive a condition for $Z^+_2$ at infinity,  we match the outer expansion \eqref{exp-1} with the inner one \eqref{outer-1}. The asymptotics of $u_0$ as $x_2 \to 0$ is as follows: $u_0 = u_0|_{x_2=0} + \partial_{x_2}u_0|_{x_2=0}\, x_2 + \mathcal{O}(x_2^2),$ which, in terms of the scaled variable $ \xi_2,$ $\xi_2 = \frac{x_2 - \varepsilon}{\varepsilon},$ becomes
	$$
	u_0(x_1,0,t) + \varepsilon\, \partial_{x_2}u_0(x_1,0,t)\, \xi_2 +  \mathcal{O}(\varepsilon) +  \mathcal{O}(\varepsilon^2 \xi_2^2).
	$$
	Comparing this expansion with the asymptotic behavior of  \eqref{outer-1} as $\xi_2 \to +\infty,$ and using  the exponential decay of $Z^+_1,$  we deduce the following condition at infinity for the coefficient $Z^+_2:$ \ $Z^+_2(\xi) \sim \xi_2$ as $\xi_2 \to +\infty.$ Therefore,  $Z^+_2$ must satisfy  the following problem: 
	\begin{equation}\label{cell-problem-Z-2}
		\left\{
		\begin{array}{rcll}
			\Delta_{\xi}Z^+_2(\xi) & =& 0 , & \xi\in\Pi^+_0,
			\\[3pt]
			\partial_{\boldsymbol{\nu}_\xi}Z^+_2(\xi) &=&  b_2,
			&  \xi \in \tilde{S}^+,
			\\[3pt]
			Z^+_2(\xi) &\sim& \xi_2
			& \text{as} \  \xi_2 \to +\infty,
			\\[3pt]
			\partial^n_{\xi_1}Z^+_2(\xi)\big|_{\xi_1=0} &=& \partial^n_{\xi_1}Z^+_2(\xi)\big|_{\xi_1=1}, & \xi_2\in (0, +\infty), \ \ n\in\{0, 1\}.
		\end{array}
		\right.
	\end{equation}
	\begin{corollary}
		If 
		\begin{equation}\label{eq-19}
			b_2 = - \frac{1}{\upharpoonleft\!\! {S}^+\!\!\upharpoonright_1},
		\end{equation}
		then there is a unique solution to problem \eqref{cell-problem-Z-2}, which is even in $\xi_1$ with respect to $\frac12$ and has  the following  asymptotic behaviour at infinity:
		\begin{equation}\label{eq-20}
			Z^+_2(\xi) = \xi_2 + c_0 +  \mathcal{O}\big(\exp(-\delta_0 \, \xi_2) \big) \quad \text{as} \ \ \xi_2 \to +\infty \ \ (\delta_0 > 0).
		\end{equation}
	\end{corollary}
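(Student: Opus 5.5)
The plan is to reduce problem \eqref{cell-problem-Z-2} to the model problem \eqref{cell-problem-Pi} by subtracting the linear growth at infinity with a cut-off. Fix a smooth function $\chi(\xi_2)$ with $\chi=0$ for $\xi_2\le R_0$ and $\chi=1$ for $\xi_2\ge R_0+1$, where $R_0>\max(h_+-1)$, so that $\chi$ vanishes near $\tilde S^+$. We look for $Z^+_2=\xi_2\chi(\xi_2)+\widetilde Z$. Since $\chi$ depends only on $\xi_2$, the function $\xi_2\chi$ is $1$-periodic and even in $\xi_1$ with respect to $\frac12$. Then $\widetilde Z$ must satisfy \eqref{cell-problem-Pi} with
\[
f=-\Delta_\xi(\xi_2\chi)=-(2\chi'+\xi_2\chi'') \quad\text{and}\quad g=b_2 .
\]
Here $f$ is smooth, compactly supported in $\{R_0\le\xi_2\le R_0+1\}$ and independent of $\xi_1$. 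Hence $\rho^{-1}f\in L^2$ and $e^{\delta\xi_2}f\in L^2$ for every $\delta>0$.

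Next we check the compatibility condition \eqref{eq-17} and see that it forces \eqref{eq-19}. Integrating in $\xi_2$ first, using that $\chi$ changes from $0$ to $1$ on the interval and that the width in $\xi_1$ is $1$, gives
\[
\int_{\Pi^+_0}f\,d\xi=-\int_{R_0}^{R_0+1}(\xi_2\chi)''\,d\xi_2=-\big[(\xi_2\chi)'\big]_{R_0}^{R_0+1}=-1 .
\]
On the other side, $\int_{\tilde S^+}g\,dl_\xi=b_2\,\upharpoonleft\!\tilde S^+\!\upharpoonright_1$. Since $\tilde S^+$ is a vertical translate of $S^+$, its length equals $\upharpoonleft\! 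S^+\!\upharpoonright_1$. So \eqref{eq-17} holds exactly when $b_2=-1/\upharpoonleft\! S^+\!\upharpoonright_1$. This is the flux balance expected from Green's formula: the unit flux of $\xi_2$ at infinity must be absorbed through the oscillating bottom.

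Then we invoke Lemma~\ref{lemma-4-1} to obtain a unique $\widetilde Z\in\mathcal H$.
\begin{itemize}
\item Because $f$ and $g$ are even with respect to $\xi_1=\frac12$ ($g$ is constant and $f$ is independent of $\xi_1$), $\widetilde Z$ is even, and therefore so is $Z^+_2$.
\item The exponential integrability of $f$ gives $\widetilde Z=c_0+\mathcal O(e^{-\delta_0\xi_2})$.
\item Since $\xi_2\chi=\xi_2$ for large $\xi_2$, this yields \eqref{eq-20}.
\end{itemize}

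For uniqueness, note that the meaning of ``$\sim\xi_2$'' must be fixed as $Z^+_2-\xi_2\chi\in\mathcal H$. The difference of two solutions then lies in $\mathcal H$ and solves \eqref{eq-16} with $f=g=0$, so uniqueness follows from the uniqueness part of Lemma~\ref{lemma-4-1}. The only delicate point is interpretive. If ``$\sim$'' were read loosely, one could add constants, or other bounded harmonic periodic functions with zero Neumann data, which are themselves constants. We therefore understand uniqueness within $\mathcal H$ modulo this normalization, exactly as in the lemma, where constants belong to $\mathcal H$ and uniqueness is stated in that sense.
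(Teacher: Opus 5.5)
Your proposal is correct and is essentially the paper's proof. Both subtract $\xi_2\chi_+(\xi_2)$ and apply Lemma~\ref{lemma-4-1} to the remainder with $g=b_2$; the compatibility condition \eqref{eq-17} then becomes $b_2\,\upharpoonleft\!\! \tilde{S}^+\!\!\upharpoonright_1=-1$, and evenness and the exponential asymptotics follow from the lemma. Your right-hand side $f=-(2\chi'+\xi_2\chi'')$ is the correct one; the formula printed in the paper is missing a prime and should read $-\chi_+'-(\xi_2\chi_+')'$.

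Read your uniqueness step the way you hint at the end. Constants lie in $\mathcal{H}$ and solve the homogeneous problem, so the difference of two solutions is only forced to be a constant. Uniqueness therefore holds up to an additive constant, i.e. up to the choice of $c_0$. This caveat is already implicit in Lemma~\ref{lemma-4-1} and is shared by the paper's argument.
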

	\begin{proof}
		The solution $Z^+_2$ is sought as the sum $\xi_2\, \chi_+(\xi_2) + \widetilde{Z}_2,$ where $\widetilde{Z}_2 \in \mathcal{H},$ and  $\widetilde{Z}_2$  solves  problem  \eqref{cell-problem-Pi} with the right-hand sides $f(\xi_2)= -  \chi'_+(\xi_2) - \big(\xi_2\, \chi_+(\xi_2)\big)'$ and $g= b_2.$ Here, $\chi_{+}$ is a smooth cut-off function defined as 
		$$
		\chi_+(\xi_2) = \left\{
		\begin{array}{ll}
			1, & \hbox{if} \ \ \xi_2 \ge 3 \max_{[0,1]}h_+, 
			\\
			0, & \hbox{if }  \  \xi_2 \le 2 \max_{[0,1]}h_+.
		\end{array}
		\right.
		$$
		
		The solvability condition for this problem reads: $b_2 \upharpoonleft\!\! \tilde{S}^+\!\!\upharpoonright_1 = -1,$ which  is equivalent to
		\eqref{eq-19} due to the equality $ \upharpoonleft\!\! \tilde{S}^+\!\!\upharpoonright_1 = \upharpoonleft\!\! {S}^+\!\!\upharpoonright_1.$  
		Next it remains to apply Lemma~\ref{lemma-4-1}.
	\end{proof}
	
	Thus, we have uniquely determined the coefficients $Z^+_1$ and $Z^+_2,$ and relation \eqref{bound-cond-1} now takes the form
	\begin{equation}\label{bound-cond-hom}
		D \,  \partial_{x_2}{u}_0(x_1,0,t)
		=  \ \upharpoonleft\!\! {S}^+\!\!\upharpoonright_1 \widetilde{\Upsilon}\big(\mathbf{u}^+_0(x_1,0,t), \mathbf{w}^f_0(x_1,t), x_1, t \big), \quad (x_1, t) \in \mathcal{I}^f \times (0, T).
	\end{equation}
	\begin{remark}\label{rem-4-4}
		Because of the $1$-periodicity  in $\xi_1$ and the oddness of $Z^+_1$ with respect to $\frac{1}{2}$,  we have $Z^+_1\big|_{\xi_1=0} = 0.$
		Therefore, since $\mathbf{u}^+_0=0$ on $\partial\Omega^+  \setminus \mathcal{I}^f,$ the sum 
		\begin{equation}\label{eq-21}
			\mathbf{u}^+_0(x_1,0,t) + \varepsilon \Big(Z^+_1\big(\tfrac{x_1}{\varepsilon}, 
			\tfrac{x_2- \varepsilon}{\varepsilon}\big)\, \partial_{x_1}\mathbf{u}^+_0(x_1,0,t) + Z^+_2\big(\tfrac{x_1}{\varepsilon}, 
			\tfrac{x_2- \varepsilon}{\varepsilon}\big)\, \partial_{x_2}\mathbf{u}^+_0(x_1,0,t)\Big)
		\end{equation}
		vanishes on the vertical parts of the boundary of $\Omega^+,$ i.e., at $x_1 = 0$ or $x_1=\ell.$
	\end{remark}
	\begin{remark}
		If the function $h_+$ is not assumed to be even, one cannot conclude that the sum \eqref{eq-21} vanishes along the vertical segments of the boundary (see Remark~\ref{rem-4-4}). In this situation, the justification of the constructed asymptotics becomes significantly more complex: it requires the introduction 
		of additional boundary and corner layer corrections. Moreover, the condition on $\mathcal{I}^f$ takes the following form:
		$$
		D \int_{S^+}\nu_1(\xi)\, dl_\xi\,  \partial_{x_1}{u}_0(x_1,0,t) + 
		D \,  \partial_{x_2}{u}_0(x_1,0,t)
		=  \ \upharpoonleft\!\! {S}^+\!\!\upharpoonright_1 \widetilde{\Upsilon}\big(\mathbf{u}^+_0(x_1,0,t), \mathbf{w}^f_0(x_1,t), x_1, t \big).
		$$ 
	\end{remark}

	\section{The homogenized  problem}\label{Sect-4} 
	
	The systems \eqref{hyperbolic-system}, \eqref{relations+} with conditions \eqref{bound-cond-hom} for each components $u^+_{0,k}, \ k \in \{1,\ldots,M\},$ and the corresponding system in $\Omega^-$ form
	the \textit{homogenized problem}
	\begin{equation}\label{homo-problem}
		\left\{\begin{array}{rcll}
			\partial_t \mathbf{u}^\pm_0 -   \mathbb{D}^\pm \Delta \mathbf{u}^\pm_0
			& = & \mathbf{F}^\pm(\mathbf{u}^\pm_0, x, t) + \boldsymbol{\mathfrak{F}}^\pm(x,t) & (x,t) \in \Omega^{\pm, T} ,
			\\[2mm]
			\mathbf{u}^\pm_0  & = & \mathbf{0}, & \text{on} \ \ \partial\Omega^{\pm, T}  \setminus \mathcal{I}^{f,T},
			\\[2mm]
			\pm\, \mathbb{D}^\pm \partial_{x_2} \mathbf{u}^\pm_0&=& 
			\upharpoonleft\!\! {S}^\pm\!\!\upharpoonright_1\widetilde{\boldsymbol{\Upsilon}}^\pm\big(\mathbf{u}^\pm_0(x_1,0,t), \mathbf{w}^f_0(x_1,t), x_1, t \big), & (x_1, t) \in \mathcal{I}^{f,T},
			\\[2mm]
			\mathbf{u}^\pm_0 \big|_{t=0} & = & \mathbf{0}, & \text{in} \ \ \Omega^\pm  ,
			\\[2mm]
			\partial_t \mathbf{w}^f_0  + \hat{\mathrm{v}} \, \partial_{x_1} \mathbf{w}^f_0 & =& \widehat{ \mathbf {F}}(\mathbf{u}^+_0(x_1,0,t), \mathbf{u}^-_0(x_1,0,t), \mathbf{w}^f_0(x_1,t), x_1, t), &
			(x_1,t) \in \mathcal{I}^{f, T},
			\\[3pt]
			\mathbf{w}^f_0(0,t)= \mathbf{0}, \ \ t\in [0,T], & & \mathbf{w}^f_0(x_1,0)= \mathbf{0}, \ \ x_1\in [0, \ell],
		\end{array}\right.
	\end{equation}
	for problem \eqref{probl}. Here,  $\mathbb{D}^\pm = \text{diag}(D^\pm_1,\ldots,D^\pm_\mathcal{M}),$
	$\mathbf{F}^\pm=(F^\pm_1,\ldots,F^\pm_\mathcal{M}),$ $\widetilde{\boldsymbol{\Upsilon}}^\pm=(\widetilde{\Upsilon}^\pm_1,\ldots,\widetilde{\Upsilon}^\pm_\mathcal{M}),$ $\widehat{\mathbf{F}}=(\widehat{F}_1,\ldots,\widehat{F}_\mathcal{M}),$ where $\widehat{F}_k$ is defined by  formula \eqref{hat F},  and 
	\begin{equation}\label{eq-22}
		\widetilde{\Upsilon}^\pm_k\big(\mathbf{u}^\pm, \mathbf{w}, x_1, t \big) :=  \frac{1}{\upharpoonleft\!\! {S}^\pm\!\!\upharpoonright_1}
		\int_{S^\pm}  \Upsilon^\pm_k\big(\mathbf{u}^\pm, \mathbf{w}, \xi, x_1, t \big) \, dl_\xi  .
	\end{equation}
	
	Denote by $\boldsymbol{\mathfrak{H}}^{\pm, *} $ the dual space to the Sobolev vector space 
	$\boldsymbol{\mathfrak{H}}^{\pm} := \left\{ \mathbf{u}\in H^1(\Omega^\pm; \mathbb{R}^\mathcal{M})\colon \ \mathbf{u}\big|_{\partial\Omega^\pm \setminus \mathcal{I}^f } =0 \right\}.$
	
	\begin{definition}\label{Def-}
		We say that a vector-function $(\mathbf{u}^+, \mathbf{w}^f, \mathbf{u}^-),$ where
		$$
		\mathbf{u}^\pm \in L^2(0,T; \boldsymbol{\mathfrak{H}}^{\pm}), \quad \partial_t \mathbf{u}^\pm \in L^2(0,T; \boldsymbol{\mathfrak{H}}^{\pm, *}) \quad \text{and} \quad \mathbf{w}^f \in C([0, \ell] \times [0,T]; \mathbb{R}^\mathcal{M}),
		$$
		is a weak solution to the homogenized problem \eqref{homo-problem} provided
		\begin{multline}\label{eq23}
			\int_{\Omega^\pm} \partial_t \mathbf{u}^\pm \cdot \mathbf{v} \, dx + \mathbb{D}^\pm\int_{\Omega^\pm} \nabla_x  \mathbf{u}^\pm :  \nabla_x \mathbf{v} \, dx 
			\\
			= \int_{\Omega^\pm} \left(\mathbf{F}^\pm(\mathbf{u}^\pm, x, t) + \boldsymbol{\mathfrak{F}}^\pm\right) \cdot \mathbf{v}\, dx  \, - 
			\upharpoonleft\!\! {S}^\pm\!\!\upharpoonright_1 \int_{\mathcal{I}^f} \widetilde{\boldsymbol{\Upsilon}}^\pm\big(\mathbf{u}^\pm(x_1,0,t), \mathbf{w}^f(x_1,t), x_1, t \big) \cdot \mathbf{v}(x_1,0)\, dx_1
		\end{multline}
		for any function $\mathbf{v} \in \boldsymbol{\mathfrak{H}}^{\pm}$ and a.e. $t \in (0, T),$ and $\mathbf{u}^\pm|_{t=0}=\mathbf{0},$ and
		\begin{equation}\label{eq24}
			\mathbf{w}^f(x_1,t) = \mathbb{S}\{\mathbf{u}^+, \mathbf{u}^-,\mathbf{w}^f\}(x_1,t), \quad (x_1, t) \in \mathcal{I}^{f, T},
		\end{equation}
		where the components of the  vector-function $\mathbb{S}\{\mathbf{u}^+, \mathbf{u}^-,\mathbf{w}^f\}(x_1,t)= \big(\mathsf{S}_1(x_1,t),\ldots, \mathsf{S}_\mathcal{M}(x_1,t)\big)$ are defined as follows: 
		\begin{equation}\label{eq25}
			\mathsf{S}_k(x_1,t)=
			\left\{
			\begin{array}{ll}
				\displaystyle  \int\limits_{0}^{t} \widehat{F}_k \big(\mathbf{u}^+(y_1, 0, \tau), \mathbf{u}^-(y_1, 0, \tau),\mathbf{w}^f(y_1, \tau),  y_1, \tau\big)\big|_{y_1 = \hat{\mathrm{v}}(\tau -t) +x_1} 
				\, d\tau  , &  (x_1,t) \in \mathcal{D}_1 ,
				\\
				\displaystyle 
				\int\limits_{t - \frac{x_1}{\hat{\mathrm{v}}}}^{t}  \widehat{F}_k\big(\mathbf{u}^+(y_1, 0, 
				\tau), \mathbf{u}^-(y_1, 0, \tau), \mathbf{w}^f(y_1, \tau),  y_1, \tau\big)\big|_{y_1 = \hat{\mathrm{v}}(\tau -t) +x_1}  \, d\tau   , & (x_1,t) \in  \mathcal{D}_2,
			\end{array}
			\right.
		\end{equation}
		and $\mathcal{D}_1 :=\{(x_1, t)\colon x_1 \in (0, \ell), \ \ t \in (0, \frac{x_1}{\hat{\mathrm{v}}})\}
		,$
		$\mathcal{D}_2 :=  \mathcal{I}^{f, T} \setminus \overline{\mathcal{D}}_1.$ 
	\end{definition}
	
	\begin{remark}
		The integral equations \eqref{eq24} are obtained by the integration of hyperbolic differential equations in \eqref{homo-problem} along the corresponding characteristics taking into account the initial and boundary conditions for them. 
	\end{remark}
	
	\subsection{Recursive inequality}
	To justify the convergence of the approximations in the theorem concerning the existence of a solution to the homogenized problem, we require two lemmas.
	\begin{lemma}\label{lemma-4-2}
		Let $\{f_n(t)\}_{n\in \mathbb{N}_0}$ be a sequence of continuous and non-negative functions on $[0,T]$
		such that
		$$
		f_{n+1}(t) \le q\,f_n(t) + C \int_{0}^{t} f_n(\tau)\, d\tau,
		$$
		where $ 0 < q < 1,\ C > 0.$ Then the series $\sum_{n=0}^{\infty} f_n(t) $
		converges uniformly for $t\in[0,T].$ 
	\end{lemma}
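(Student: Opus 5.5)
The plan is to iterate the recursive inequality and bound $f_n$ by an explicit sum of binomial-type terms, then show the resulting double series converges. Set $M:=\max_{[0,T]} f_0$ and introduce the Volterra operator $(Jg)(t):=\int_0^t g(\tau)\,d\tau$. The hypothesis reads $f_{n+1}\le (qI + CJ)f_n$. Since $q I$ and $CJ$ are positive operators (they preserve nonnegativity and the pointwise order of continuous functions) and they commute, induction gives
\begin{equation*}
f_n(t) \le \big((qI+CJ)^n f_0\big)(t) = \sum_{k=0}^{n}\binom{n}{k} q^{n-k} C^k (J^k f_0)(t).
\end{equation*}
The monotonicity of $qI+CJ$ is what makes the induction step legitimate: from $f_n\le g_n$ with $f_n\ge 0$ we get $qf_n + CJf_n \le qg_n+CJg_n$.

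Next I would use the elementary estimate $(J^k f_0)(t)\le M t^k/k! \le M T^k/k!$. This yields the uniform bound
\begin{equation*}
\sup_{t\in[0,T]} f_n(t)\le a_n:=M\sum_{k=0}^{n}\binom{n}{k} q^{n-k}\frac{(CT)^k}{k!}.
\end{equation*}
By the Weierstrass M-test, it then suffices to show $\sum_n a_n<\infty$.

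The only real point is summing $a_n$, because the crude bound $(q+CT)^n$ may exceed $1$. I would interchange the order of summation, which is allowed since all terms are nonnegative:
\begin{equation*}
\sum_{n=0}^\infty a_n = M\sum_{k=0}^\infty \frac{(CT)^k}{k!}\sum_{n\ge k}\binom{n}{k}q^{n-k}.
\end{equation*}
The inner sum is the classical identity $\sum_{m\ge0}\binom{m+k}{k}q^m=(1-q)^{-(k+1)}$, obtained by differentiating the geometric series $k$ times, and it uses $0<q<1$. Hence
\begin{equation*}
\sum_{n} a_n = \frac{M}{1-q}\exp\!\Big(\frac{CT}{1-q}\Big)<\infty,
\end{equation*}
so $\sum_n f_n$ converges uniformly on $[0,T]$.

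I expect this interchange-and-resum step to be the main obstacle, since it is exactly where $q<1$ enters. The factorials coming from the Volterra operator only compensate the growth when they are combined with the negative-binomial series. Continuity of the $f_n$ is used to define $M$ and to keep the $J^k f_0$ continuous; nonnegativity is used for the monotonicity in the induction and for Tonelli-type rearrangement.
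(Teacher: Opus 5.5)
Your proof is correct. It arrives at exactly the paper's majorant $g_n(t)=M\sum_{k=0}^{n}\binom{n}{k}q^{n-k}\frac{(Ct)^k}{k!}$, but by a different route.

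The paper defines $g_n$ by the equality recurrence $g_{n+1}=qg_n+C\int_0^t g_n\,d\tau$ with $g_0\equiv M$. It then introduces the generating function $G(x,t)=\sum_n g_n(t)x^n$, solves an ODE in $t$ to get $G(x,t)=\frac{M}{1-qx}\exp\bigl(\frac{Cxt}{1-qx}\bigr)$, and extracts the coefficient of $x^n$ by collapsing a triple sum.

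You instead use that $qI$ and $CJ$ commute and preserve order, expand $(qI+CJ)^n$ binomially, and bound $J^kf_0\le Mt^k/k!$. This bypasses the generating function and the termwise manipulations in $x$ entirely, and it is shorter.

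Your explicit summation $\sum_n a_n=\frac{M}{1-q}\exp\bigl(\frac{CT}{1-q}\bigr)$, via Tonelli and the negative binomial series, also makes precise the step the paper compresses into ``taking into account that $q\in(0,1)$''. In the paper's notation it is just $G(1,T)$, which is admissible since $1<1/q$.

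One small correction to your closing remark concerns where $f_n\ge0$ is used. The order-preservation of $qI+CJ$ does not need it, since it holds for any $u\le v$. The interchange of summation does not need it either, since it only concerns the nonnegative terms of $a_n$. Nonnegativity is what gives $|f_n|\le a_n$ in the $M$-test, and it cannot be dropped: for example, $f_n\equiv-2^n$ satisfies the recursion when $CT\le1$, yet $\sum_n f_n$ diverges.
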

	\begin{proof} 
		Let’s define a new majorant sequence:
		\begin{equation}\label{recur-1}
			g_0(t) := M_0 = \max_{t \in [0,T]} f_0(t), \quad
			g_{n+1}(t) = q\,  g_n(t) + C \int_0^t g_n(\tau)\, d\tau.
		\end{equation}
		This sequence satisfies the same recurrence as $\{f_n\},$ and it is easy to verify that
		$f_n(t) \le g_n(t)$ for $t\in [0,T].$
		
		Now let us find a solution of  the linear recurrence \eqref{recur-1}. To do this we define the bivariate generating function
		\[
		G(x,t)\;=\;\sum_{n=0}^{\infty}g_{n}(t)\,x^{n}.
		\]
		At \(t=0\), the integral vanishes and \(g_{n}(0)=q\,g_{n-1}(0)\) with \(g_{0}(0)=M_{0}\).  By induction
		$g_{n}(0)=M_{0}\,q^{n},$
		so
		\[
		G(x,0)
		=\sum_{n=0}^{\infty}M_{0}\,q^{n}\,x^{n}
		=\frac{M_{0}}{1-qx} \quad \text{for} \ \ x \in [0, \tfrac{1}{q}).
		\]
		
		Multiplying recurrence \eqref{recur-1} by $x^{n}$ and summing over $n\ge0,$ we get
		functional equation for $G$:
		\begin{equation}\label{recur-2}
			\frac{G(x,t)-M_{0}}{x}
			=q\,G(x,t)
			+ C\int_{0}^{t}G(x,\tau)\,d\tau
			\;\Longrightarrow\;
			(1-qx)\,G(x,t)
			= M_{0} + C\,x\int_{0}^{t}G(x,\tau)\,d\tau.
		\end{equation}
		Differentiating \eqref{recur-2} with respect to $t,$ we obtain 
		\[
		\frac{\partial G}{\partial t}
		= \frac{C\,x}{1-qx}\,G,
		\]
		whence 
		\[
		G(x,t)
		=G(x,0)\,
		\exp\Bigl(\frac{C\,x\,t}{1-qx}\Bigr)
		=\frac{M_{0}}{1-qx}
		\exp\Bigl(\frac{C\,x\,t}{1-qx}\Bigr).
		\]
		
		Using obvious expansions
		\[
		\frac{1}{1-qx}
		=\sum_{m=0}^{\infty}q^{m}x^{m},
		\quad
		\exp\Bigl(\frac{C\,x\,t}{1-qx}\Bigr)
		=\sum_{k=0}^{\infty}\frac{1}{k!}
		\Bigl(\frac{C\,t\,x}{1-qx}\Bigr)^{k}
		=\sum_{k=0}^{\infty}\frac{(C\,t)^{k}}{k!}\,x^{k}
		\sum_{j=0}^{\infty}\binom{k+j-1}{j}(q\,x)^{j},
		\]
		we deduce
		\[
		G(x,t)
		=M_{0}
		\sum_{m,k,j\ge0}
		q^{m}\,
		\frac{(C\,t)^{k}}{k!}\,
		\binom{k+j-1}{j}\,
		q^{\,j}\,
		x^{\,m+k+j}.
		\]
		The coefficient of $x^{n}$ is
		\[
		g_{n}(t)
		= M_{0}
		\sum_{\substack{m,k,j\ge0\\m+k+j=n}}
		q^{\,m+j}
		\frac{(C\,t)^{k}}{k!}
		\binom{k+j-1}{j}.
		\]
		Collapsing  the triple sum, we have
		\[
		g_{n}(t)
		= M_{0}
		\sum_{k=0}^{n}
		\binom{n}{k}
		q^{\,n-k}
		\frac{(C\,t)^{k}}{k!}.
		\]
		
		Taking into account that $q \in (0, 1),$ we verify with the  Weierstrass $M$-test that the series $\sum_{n=0}^{\infty} g_n(t) $
		converges uniformly for $t\in[0,T].$
	\end{proof}
	
	\begin{lemma}\label{lemma-4-3}
		Let $\{f_n(t)\}_{n\in \mathbb{N}_0}$ be a sequence of continuous and non-negative functions on $[0,T]$
		satisfying the recursive inequality
		\[
		f_{n+1}(t) \;\le\; C \int_0^t f_n(\tau)\, d\tau + \alpha_n,
		\]
		where $C > 0$ and the series of non-negative numbers $\sum_{n=0}^{\infty} \alpha_n$ converges.
		
		Then the series $\sum_{n=0}^{\infty} f_n(t) $
		converges uniformly for $t\in[0,T].$ 
	\end{lemma}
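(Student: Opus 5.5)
We plan to bound the \(f_n\) by an explicit Volterra-type majorant, as in the proof of Lemma~\ref{lemma-4-2}, and then show that the majorants are summable uniformly on \([0,T]\). Write \(M_0:=\max_{[0,T]} f_0\) and \(A:=\sum_{n\ge0}\alpha_n<\infty\).

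\textbf{Step 1: unfolding the recursion.} Iterating the inequality and using the monotonicity of the Volterra operator \((Kg)(t):=C\int_0^t g(\tau)\,d\tau\) on non-negative functions, we get by induction
\[
f_{n}(t)\;\le\; M_0\,\frac{(Ct)^{n}}{n!}\;+\;\sum_{j=0}^{n-1}\alpha_j\,\frac{(Ct)^{\,n-1-j}}{(n-1-j)!}, \qquad t\in[0,T].
\]
The induction step uses \(K\bigl[(C\tau)^m/m!\bigr](t)=(Ct)^{m+1}/(m+1)!\) and \(K[1](t)=Ct\). The constant \(\alpha_n\) enters \(f_{n+1}\) with weight \(1=(Ct)^0/0!\), and each further application of \(K\) raises the power by one.

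\textbf{Step 2: summation.} Set \(g_n(t)\) equal to the right-hand side of Step~1; this is the majorant. Each \(g_n\) is bounded on \([0,T]\) by the constant
\[
M_0\frac{(CT)^n}{n!}+\sum_{j=0}^{n-1}\alpha_j\frac{(CT)^{n-1-j}}{(n-1-j)!}.
\]
These constants are summable in \(n\): the first part sums to \(M_0e^{CT}\). The second part is a Cauchy product of the two absolutely convergent series \(\sum_j\alpha_j\) and \(\sum_m (CT)^m/m!\), so its sum is \(A\,e^{CT}<\infty\). By the Weierstrass \(M\)-test, \(\sum_n f_n(t)\) therefore converges uniformly on \([0,T]\), since \(0\le f_n\le g_n\).

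The only point requiring care is the rearrangement in Step~2. It is justified because all terms are non-negative (Tonelli for series), so no real obstacle is expected.
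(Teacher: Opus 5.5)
Your proposal is correct and is essentially the paper's proof: the same inductive bound $f_n(t)\le M_0\frac{(Ct)^n}{n!}+\sum_{j=0}^{n-1}\alpha_j\frac{(Ct)^{n-1-j}}{(n-1-j)!}$, followed by the same Cauchy-product summation giving $M_0e^{CT}+e^{CT}\sum_k\alpha_k$. Your induction applies the Volterra operator to the full bound, which is slightly cleaner than the paper's intermediate step $f_{n+1}\le g_{n+1}+\alpha_n$, since that step tacitly assumes $f_n\le g_n$.
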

	\begin{proof} We define a majorant sequence $\{g_n(t)\}$ recursively as follows:
		\[
		g_0(t) := M_0 := \max_{t \in [0, T]} f_0(t), \quad
		g_{n+1}(t) := C \int_0^t g_n(\tau)\, d\tau.
		\]
		Since \( f_{n+1}(t) \le C \int_0^t f_n(\tau)\,d\tau + \alpha_n \le g_{n+1}(t) + \alpha_n \),
		we obtain by induction:
		\[
		f_n(t) \;\le\; g_n(t) + \sum_{k=0}^{n-1} \alpha_k \cdot \frac{(C t)^{n-1-k}}{(n-1-k)!},
		\quad \text{for all } t \in [0, T].
		\]

		To evaluate \( \max_{t \in [0,T]} f_n(t) \), we note that 
		\[
		g_n(t) = M_0 \cdot \frac{(C t)^n}{n!},
		\quad \text{and so} \quad \max_{t \in [0,T]} g_n(t) = M_0 \cdot \frac{(C T)^n}{n!}.
		\]
		Thus,	
		\[
		\max_{t \in [0,T]} f_n(t)
		\;\le\; M_0 \cdot \frac{(C T)^n}{n!}
		+ \sum_{k=0}^{n-1} \alpha_k \cdot \frac{(C T)^{n-1-k}}{(n-1-k)!}.
		\]
		Summing over $n,$ we obtain:
		\begin{align*}
			\sum_{n=0}^{\infty} \max_{t \in [0,T]} f_n(t)
			&\le M_0 \sum_{n=0}^{\infty} \frac{(C T)^n}{n!}
			+ \sum_{n=0}^{\infty} \sum_{k=0}^{n-1} \alpha_k \cdot \frac{(C T)^{n-1-k}}{(n-1-k)!} \\
			&= M_0\, e^{C T} + \sum_{k=0}^{\infty} \alpha_k \sum_{m=0}^{\infty} \frac{(C T)^m}{m!}
			= M_0\, e^{C T} + e^{C T} \sum_{k=0}^{\infty} \alpha_k.
		\end{align*}
		
		Since \( \sum \alpha_k < \infty \), the series \( \sum_{n=0}^{\infty} \max_{t \in [0,T]} f_n(t) \) converges.
	\end{proof}

	\subsection{Well-posedness of the homogenized problem}\label{par-exist}
	\begin{theorem}\label{Th-4-1}
		There exists a unique weak solution to problem \eqref{homo-problem}. 
	\end{theorem}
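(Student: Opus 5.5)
We will prove existence by a Picard-type iteration that alternates between the parabolic problems in $\Omega^\pm$ and the integral equations \eqref{eq24}, and we will show convergence of the iterates with Lemmas~\ref{lemma-4-2} and~\ref{lemma-4-3}. Uniqueness will follow from the same difference estimates.

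\textbf{Iteration.} Set $\mathbf{w}^{(0)}:=\mathbf{0}$. Given $\mathbf{w}^{(n)}\in C([0,\ell]\times[0,T];\mathbb{R}^{\mathcal M})$, let $\mathbf{u}^{\pm,(n+1)}$ be the weak solution of the parabolic problems \eqref{eq23} with $\mathbf{w}^f$ replaced by $\mathbf{w}^{(n)}$. This is a semilinear problem with Lipschitz volume and boundary nonlinearities (see \eqref{Lip-ineq}), so it is uniquely solvable by Galerkin's method or Banach's fixed point theorem on short time intervals, exactly as for the microscopic problem. We then define $\mathbf{w}^{(n+1)}:=\mathbb{S}\{\mathbf{u}^{+,(n+1)},\mathbf{u}^{-,(n+1)},\mathbf{w}^{(n)}\}$ through \eqref{eq25}.

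To ensure that $\mathbf{w}^{(n+1)}$ is continuous and the traces in \eqref{eq25} make sense, we need $\mathbf{u}^{\pm}(\cdot,0,\cdot)\in C$. Since the data vanish near $t=0$ (see \eqref{init-conds}, \eqref{init-conds+}) and have compact support, parabolic regularity (a Neumann problem with bounded flux) gives Hölder-continuous traces. Alternatively, one can first work with $L^2$ traces and recover continuity at the end, since $\widehat{F}$ is bounded.

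\textbf{Difference estimates.} Let $\delta\mathbf{u}^{\pm}_n:=\mathbf{u}^{\pm,(n+1)}-\mathbf{u}^{\pm,(n)}$ and $\delta\mathbf{w}_n:=\mathbf{w}^{(n+1)}-\mathbf{w}^{(n)}$.

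For the parabolic part, we test \eqref{eq23} for the difference with $\delta\mathbf{u}^{\pm}_n$ and use the Lipschitz bounds together with the trace inequality $\|v(\cdot,0)\|^2_{L^2(\mathcal{I}^f)}\le \eta\|\nabla v\|^2+C_\eta\|v\|^2$. Gronwall's lemma then gives
\[
\|\delta\mathbf{u}^\pm_{n}(t)\|^2_{L^2(\Omega^\pm)}+\int_0^t\|\delta\mathbf{u}^\pm_{n}\|^2_{H^1}\,d\tau\le C\int_0^t\|\delta\mathbf{w}_{n-1}(\cdot,\tau)\|^2_{L^2(\mathcal{I}^f)}\,d\tau .
\]

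For the hyperbolic part, we take differences in \eqref{eq25} and integrate along the characteristics $y_1=\hat{\mathrm v}(\tau-t)+x_1$, with $\hat{\mathrm v}>0$ by Proposition~\ref{Prop-4-2}. We then square and integrate in $x_1$. The change of variables $(x_1,\tau)\mapsto y_1$ has Jacobian $1$ at fixed $\tau$, which gives
\[
f_{n+1}(t):=\|\delta\mathbf{w}_{n}(t)\|^2_{L^2(\mathcal{I}^f)}\le C\int_0^t\big(\|\delta\mathbf{u}^{\pm}_{n}(\cdot,0,\tau)\|^2_{L^2(\mathcal{I}^f)}+f_n(\tau)\big)d\tau .
\]

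The main obstacle is the trace term $\int_0^t\|\delta\mathbf{u}_n(\cdot,0,\tau)\|^2$, which is controlled only by $\int_0^t\|\delta\mathbf{u}_n\|^2_{H^1}$. By the parabolic estimate this is in turn bounded by $C\int_0^t f_{n}$, up to the $\eta$-splitting. This yields $f_{n+1}(t)\le C\int_0^t f_n$, or, if one only has the trace with a small factor $\eta$, the form $f_{n+1}\le q f_n+C\int_0^t f_n$ with $q<1$. Lemma~\ref{lemma-4-2} is stated precisely for this second case. Here I would carefully choose $\eta$ so that $q<1$.

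\textbf{Passage to the limit and uniqueness.} Lemma~\ref{lemma-4-2} shows that $\sum_n f_n$ converges uniformly, so $\mathbf{w}^{(n)}$ is Cauchy in $C([0,T];L^2(\mathcal{I}^f))$ and $\mathbf{u}^{\pm,(n)}$ is Cauchy in $L^2(0,T;\boldsymbol{\mathfrak H}^\pm)\cap C([0,T];L^2)$. Lemma~\ref{lemma-4-3} handles upgrading this to uniform convergence of $\mathbf{w}^{(n)}$ in $C([0,\ell]\times[0,T])$, where the $\alpha_n$ come from the summable trace contributions. Passing to the limit in \eqref{eq23} and \eqref{eq24} uses only the Lipschitz continuity, so the limit is a weak solution.

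For uniqueness, we apply the same two estimates to the difference of two solutions, with $\mathbf{w}$ equal in both slots. Adding them gives $\phi(t)\le C\int_0^t\phi$ for $\phi=\|\delta\mathbf{w}\|^2+\|\delta\mathbf{u}^\pm\|^2$, and hence $\phi\equiv0$ by Gronwall's lemma.
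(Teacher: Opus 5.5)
\textbf{Comparison with the paper's route.} Your argument is correct in its core and takes a genuinely different route from the paper's. The paper linearizes every nonlinearity: $\mathbf v^\pm_n$ solves a linear heat problem with data $\mathbf F^\pm(\mathbf v^\pm_{n-1})$ and $\widetilde{\boldsymbol\Upsilon}^\pm(\mathbf v^\pm_{n-1},\mathbf z^f_{n-1})$, and the iterates are kept classical by Schauder theory. It measures differences of the interface traces and of $\mathbf z^f_n$ in the sup norm, using the explicit Fourier expansion on the rectangle to get $V_n(t)\le C t^{1/4}(V_{n-1}+Z_{n-1})$. Because $t^{1/4}$ is not an integral factor, this contracts only on a short interval $[0,T_0]$ (Lemma~\ref{lemma-4-2} with $q=C_2T_0^{1/4}$). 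The paper therefore has to restart on $[T_0,2T_0],\dots$, obtains bulk convergence afterwards by an energy estimate plus Lemma~\ref{lemma-4-3}, and proves uniqueness again through sup-norm trace bounds. You instead solve a semilinear parabolic problem at each step and stay in energy norms. The energy estimate gives $\int_0^t\|\delta\mathbf u_n\|_{H^1}^2\le C\int_0^t f_n$. Your translation-invariant characteristic estimate, combined with the trace theorem, then closes to the pure Volterra inequality $f_{n+1}(t)\le C\int_0^t f_n$. Your hedging about choosing $\eta$ to get $q<1$ is therefore unnecessary. This buys factorial decay on all of $[0,T]$, with no continuation and no Fourier series. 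The bulk reaction $\mathbf F^\pm$ and the self-coupling in $\widetilde{\boldsymbol\Upsilon}^\pm$ are absorbed in a single Gronwall argument, and uniqueness is equally short: insert the parabolic bound for $\int_0^t\|\delta\mathbf u(\cdot,0,\tau)\|^2$ into the characteristic estimate and apply Gronwall to $\|\delta\mathbf w\|^2_{L^2(\mathcal I^f)}$.

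\textbf{The thin step: continuity of $\mathbf w^f$.} What the paper's sup-norm route delivers directly is $\mathbf w^f\in C([0,\ell]\times[0,T])$ with \eqref{eq24} holding pointwise, as the definition of weak solution requires. Your contraction only yields a limit in $C([0,T];L^2(\mathcal I^f))$, and neither justification you give for the upgrade works as stated.
\begin{itemize}
\item Boundedness of $\widehat{\mathbf F}$ gives Lipschitz continuity along characteristics but nothing across them. If the trace jumps across a line $x_1-\hat{\mathrm v}t=s_0$, then so does $\mathbb S$.
\item Lemma~\ref{lemma-4-3} applied to $Z_n(t)=\sup|\delta\mathbf w_n|$ requires summable $\alpha_n\sim\sup_{\mathcal I^{f,T}}|\delta\mathbf u_n(\cdot,0,\cdot)|$. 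Your $L^2(0,T;H^1)$ bounds do not control this quantity, since traces are only $H^{1/2}$.
\end{itemize}
The missing ingredient is the regularity you mention only in passing, used uniformly in $n$. The data $\mathbf F^\pm,\boldsymbol{\mathfrak F}^\pm,\boldsymbol\Upsilon^\pm$ are uniformly bounded and supported away from the Dirichlet--Neumann corners. Hence De Giorgi--Nash--Moser (LSU) theory gives a H\"older bound on the traces of $\mathbf u^{\pm,(n)}$ that is independent of $n$ and of $\mathbf w^{(n)}$. There are then two ways to finish.
\begin{itemize}
\item Interpolate: $\|g\|_{L^\infty}\le C\|g\|_{L^2}^{\theta}\|g\|_{C^{\alpha}}^{1-\theta}$ turns your factorial $L^2$ bounds into summable sup-norm bounds, and Lemma~\ref{lemma-4-3} applies.
\item Alternatively, pass to the (uniform, by Arzel\`a--Ascoli) H\"older-continuous limit trace and define $\mathbf w$ as the solution of the ODE system along each characteristic. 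This is continuous by continuous dependence on parameters, and your $L^2$ characteristic estimate shows it agrees a.e.\ with the $L^2$ limit.
\end{itemize}
With this step made explicit, your proof is complete.
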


	\begin{proof}
		The proof of existence will be carried out using the iteration method.
		
		{\bf 1.} We propose the following iteration scheme. Consider the sequences of vector-functions
		\begin{equation}\label{sequences}
			\{\mathbf{v}^+_m\}_{m\in \mathbb N_0}, \quad \{\mathbf{v}^-_m\}_{m\in \mathbb N_0}, \quad \{\mathbf{z}^f_m\}_{m\in \mathbb N_0}, 
		\end{equation}   
		where $\mathbf{z}^f_0 = \mathbf{0},$ $\mathbf{v}^\pm_0 \in C^\infty\big(\overline{\Omega^{\pm, T}}; \mathbb{R}^\mathcal{M}\big)$ and 
		$
		\mathbf{v}^\pm_0\big|_{\partial\Omega^{\pm, T}\setminus \mathcal{I}^{f,T}} = \mathbf{0}, \quad 
		\mathbf{v}^\pm_0\big|_{t=0} = \mathbf{0}.
		$
		The other functions are defined as follows: if $\mathbf{v}^+_{n-1}$ and $\mathbf{v}^-_{n-1}$ are known as functions from the H\"older spaces
		$H^{2+\gamma,1 + \frac{1}{\gamma}}\big(\overline{\Omega^{+,T}}; \mathbb{R}^\mathcal{M}\big)$  and $H^{2+\gamma,1 + \frac{1}{\gamma}}\big(\overline{\Omega^{-,T}}; \mathbb{R}^\mathcal{M}\big),$ and $\mathbf{z}^f_{n-1}$ is known as function from the space
		$C^2(\overline{\mathcal{I}^{f, T}}; \mathbb{R}^\mathcal{M}),$
		respectively, then first we define $\mathbf{v}^+_{n}$ and $\mathbf{v}^-_{n}$ as solutions to the linear problems
		\begin{equation}\label{recur-prob-1}
			\left\{\begin{array}{rcll}
				\partial_t \mathbf{v}^\pm_n -   \mathbb{D}^\pm \Delta \mathbf{v}^\pm_n
				& = & \mathbf{F}^\pm(\mathbf{v}^\pm_{n-1}, x, t) + \boldsymbol{\mathfrak{F}}^\pm(x,t) & (x,t) \in \Omega^{\pm, T} ,
				\\[2mm]
				\mathbf{v}^\pm_n  & = & \mathbf{0}, & \text{on} \ \ \partial\Omega^{\pm, T}  \setminus \mathcal{I}^{f,T},
				\\[2mm]
				\pm\, \mathbb{D}^\pm \partial_{x_2} \mathbf{v}^\pm_n &=& 
				\upharpoonleft\!\! {S}^\pm\!\!\upharpoonright_1\widetilde{\boldsymbol{\Upsilon}}^\pm\big(\mathbf{v}^\pm_{n-1}(x_1,0,t), \mathbf{z}^f_{n-1}(x_1,t), x_1, t \big), & (x_1, t) \in \mathcal{I}^{f,T},
				\\[2mm]
				\mathbf{v}^\pm_{n} \big|_{t=0} & = & \mathbf{0}, & \text{in} \ \ \Omega^\pm ,
			\end{array}\right.
		\end{equation}
		depending on sign "$+$" or "$-$",  
		and then the function $\mathbf{z}^f_{n}$ as a solution to the linear first order hyperbolic system
		\begin{equation}\label{recur-prob-2}
			\left\{\begin{array}{rcll}
				\partial_t \mathbf{z}^f_n  + \hat{\mathrm{v}} \, \partial_{x_1} \mathbf{z}^f_n & =& \widehat{ \mathbf {F}}(\mathbf{v}^+_n(x_1,0,t), \mathbf{v}^-_n(x_1,0,t), \mathbf{z}^f_{n-1}(x_1,t), x_1, t), &
				(x_1,t) \in \mathcal{I}^{f, T},
				\\[3pt]
				\mathbf{z}^f_n(0,t)= \mathbf{0}, \ \ t\in [0,T], & & \mathbf{z}^f_n(x_1,0)= \mathbf{0}, \ \ x_1\in [0, \ell].
			\end{array}\right.
		\end{equation}
		
		Due to assumptions \textbf{(A3)} for  the vector-functions $\mathbf{F}^{\pm},$ $\boldsymbol{\mathfrak{F}}^\pm$ and $\boldsymbol{\Upsilon}^\pm,$ all necessary matching conditions for problem \eqref{recur-prob-1} are satisfied and the solution $\mathbf{v}^\pm_n$ belongs to the same space as $\mathbf{v}^\pm_{n-1}$ (see \cite[Chapt. IV, \S 5]{Lad_Sol_Ura_1968}) and each its component ${v}^\pm_{k, n}$
		can be represented in the doubly–indexed Fourier series
		\begin{equation}\label{rep-1}
			{v}^\pm_{k, n}(x_{1},x_{2},t) = \sum_{m=1}^{\infty}\sum_{p=0}^{\infty}
			A^{(\pm, k)}_{m,p}(t)\,\phi_{m,p}(x_{1},x_{2}), \quad (x,t) \in \overline{\Omega^{\pm, T}},
		\end{equation}
		with corresponding Fourier coefficients $\{A^{(\pm, k)}_{m,p}\}.$ Here 
		$
		\phi_{m,p}(x_{1},x_{2}) 
		= \sin\bigl(\alpha_{m}\,x_{1}\bigr)\, \cos\bigl(\mu_{p}\,x_{2}\bigr)
		$
		are separable modes and $\alpha_{m} = \frac{m\pi}{\ell},$  
		\ 
		$
		\mu^{\pm}_{p} = \frac{(2p+1)\,\pi}{2\,\mathfrak{h}^{\pm}}.
		$

		Taking into account the assumptions regarding the smoothness of functions $\boldsymbol{\Phi}^{\pm}$ and $\boldsymbol{\Psi},$ as well as their compact supports, the matching conditions at the point $(0,0)$ for the hyperbolic system \eqref{recur-prob-2} are satisfied (for more detail, see \cite[Appendix B]{Mel-Roh_JMAA-2024}). Consequently, problem \eqref{recur-prob-2} admits a unique $C^2$-smooth solution, and its explicit representation by formula \eqref{eq25} is possible.
		
		\smallskip
		\noindent
		{\bf 2.} Here, we prove the convergence of the sequences 
		$\{\mathbf{v}^\pm_m(x_1, 0, t)\}$ and $\{\mathbf{z}^f_m(x_1,t)\}$
		in the space $C(\overline{\mathcal{I}^{f, T_0}}; \mathbb{R}^\mathcal{M})$ for sufficiently small $T_0.$ Let us introduce the following notation:
		\begin{equation}\label{not-1}
			Z_n(t) := \max_{k;\, x_1\in [0,\ell];\, \tau\in [0, t]} \big|z^f_{k, n+1}(x_1, \tau) - z^f_{k, n}(x_1, \tau)\big|,
		\end{equation}
		\begin{align}\label{not-2}
			V_n(t) := & \ \max_{k; \,x_1\in [0,\ell]; \, \tau\in [0, t]} \big|v^+_{k, n+1}(x_1, 0, \tau) - v^+_{k, n}(x_1, 0, \tau)\big|
			\notag
			\\
			& \ +
			\max_{k;\, x_1\in [0,\ell]; \, \tau\in [0, t]} \big|v^-_{k, n+1}(x_1, 0, \tau) - v^-_{k, n}(x_1, 0, \tau)\big|.
		\end{align}

		Using the representation formula \eqref{eq25} for solutions $\mathbf{z}^f_{n+1}$ and $\mathbf{z}^f_n$ and the Lipschitz continuity of $\widehat{ \mathbf{F}}$ (see Remark~\ref{rem-2-3}), we derive the inequality 
		\begin{equation}\label{Z-n}
			Z_n(t) \le C_1 \bigg(\int_0^t V_n(\tau) \, d\tau + \int_0^t Z_{n-1}(\tau)\, d\tau\bigg).
		\end{equation}
		
		Now we estimate $V_n$ using the Fourier representation \eqref{rep-1} for the solutions $\mathbf{v}^\pm_{n+1}$ and $\mathbf{v}^\pm_{n}.$ For each $k\in \{1,\ldots,\mathcal{M}\},$ 
		\begin{multline}\label{dif-v-1}
			\big|v^+_{k, n+1}(x_1, 0, \tau) - v^+_{k, n}(x_1, 0, \tau)\big|
			+
			\big|v^-_{k, n+1}(x_1, 0, \tau) - v^-_{k, n}(x_1, 0, \tau)\big|
			\\	
			\le\; \frac{4}{\ell \,\mathfrak{h}^+}
			\sum_{m=1}^{\infty}\sum_{p=0}^{\infty}
			\int_{0}^{\tau}e^{-D^+_k \lambda^+_{m,p}\, (\tau-s)}\bigl|G^+_{m}(s)\bigr|\,ds + \frac{4}{\ell \,\mathfrak{h}^-}\sum_{m=1}^{\infty}\sum_{p=0}^{\infty}
			\int_{0}^{\tau}e^{-D^-_k\lambda^-_{m,p} \, (\tau-s)}\bigl|G^-_{m}(s)\bigr|\,ds.
		\end{multline}
		Here $\lambda^{\pm}_{m,p}=\alpha_{m}^{2}+(\mu^{\pm}_{p})^{2}$ and $G^{\pm}_{m}(s)$ is the sine-Fourier coefficient for the corresponding Neumann data.
		
		Using Cauchy-Schwarz and Parseval inequalities, we deduce
		\begin{multline}\label{dif-v-2}
			\sum_{m=1}^{\infty}\sum_{p=0}^{\infty}
			\int_{0}^{\tau}e^{-D^\pm_k \lambda^\pm_{m,p}\, (\tau-s)}\bigl|G^\pm_{m}(s)\bigr|\,ds \le 
			\int_{0}^{\tau} \sum_{m=1}^{\infty} \bigl|G^\pm_{m}(s)\bigr|\, 
			e^{-D^\pm_k \alpha^2_m\, (\tau-s)}\, \sum_{p=0}^{\infty} e^{-D^\pm_k (\mu^\pm_{p})^2\, (\tau-s)}\, ds	
			\\
			\le 
			\int_{0}^{\tau} \Bigl(\sum_{m=1}^\infty G^\pm_m(s)^2\Bigr)^{1/2}
			\Bigl(\sum_{m=1}^\infty e^{-2 D^\pm_k\alpha_m^2 (\tau - s)}\Bigr)^{1/2}
			\int_0^\infty e^{- D^\pm_k (\pi/\mathfrak{h}^\pm)^2 y^2 (\tau- s)}\,dy \, ds
			\\
			=
			\int_{0}^{\tau} \Bigl(\sum_{m=1}^\infty G^\pm_m(s)^2\Bigr)^{1/2}
			\Biggl(\frac{\ell}{2 \sqrt{2 \pi D^\pm_k \, (\tau -s)}} \Biggr)^{1/2}
			\frac{\mathfrak{h}^\pm}{2 \sqrt{ \pi D^\pm_k \, (\tau -s)}}\, ds
			\\
			\le
			\sqrt{\ell} \upharpoonleft\!\! {S}^\pm\!\!\upharpoonright_1 \max_{x_1\in [0,\ell], \, s \in [0, \tau]} \big|\widetilde{\Upsilon}_k^\pm\big(\mathbf{v}^\pm_{n}(x_1,0,s), \mathbf{z}^f_{n}(x_1,s), x_1, s \big) -
			\widetilde{\Upsilon}_k^\pm\big(\mathbf{v}^\pm_{n-1}(x_1,0,s), \mathbf{z}^f_{n-1}(x_1,s), x_1, s \big)\big| 
			\\
			\times C_1 \, \int_{0}^{\tau} \frac{1}{(\tau -s)^{3/4}} \, ds 	
		\end{multline}
		
		Taking into account \eqref{Lip-ineq}, \eqref{dif-v-1} and \eqref{dif-v-2}, we get that
		\begin{equation}\label{V-n}
			V_n(t) \le C_2 \, t^{\frac{1}{4}} \Big(V_{n-1}(t) + Z_{n-1}(t)\Big),	
		\end{equation} 
		where the constant $C_2$ depends only on $\ell,$ $\mathfrak{h}^\pm,$ $\mathbb{D}^\pm$ and $\upharpoonleft\!\! {S}^\pm\!\!\upharpoonright_1.$
		
		It follows from \eqref{Z-n} and \eqref{V-n} that
		\begin{equation}\label{W-n}
			W_n(t) := Z_n(t) + V_n(t) \le C_2 \, t^{\frac{1}{4}} \, W_{n-1}(t) + C_3 \, T \, \int_{0}^{t} W_{n-1}(\tau)\, d\tau. 	
		\end{equation}
		
		We choose $T_0$ so small that $C_2\, (T_0)^{\frac{1}{4}} < 1.$ This allows us to apply Lemma~\ref{lemma-4-2} and 
		Cauchy criterion for uniform convergence of a sequence. Consequently, the sequences
		$\{\mathbf{v}^\pm_n(x_1, 0, t)\}_{n\in \mathbb N_0}$ and $\{\mathbf{z}^f_n(x_1,t)\}_{n\in \mathbb N_0}$ converge uniformly on $[0, \ell] \times [0, T_0]$ to continuous vector-functions $\mathbf{u}^\pm_0(x_1,t)$ and $\mathbf{w}^f_0(x_1, t),$ respectively.
		
		\smallskip
		\noindent
		{\bf 3.} Here we prove the convergence of $\{\mathbf{v}^\pm_n(x, t)\}$ 
		in the space 
		$$
		X_{T_0} := C([0, T_0]; L^2(\Omega^\pm; \mathbb{R}^\mathcal{M})) \cap L^2(0, T_0; H^1(\Omega^\pm; \mathbb{R}^\mathcal{M}))
		$$
		with the norm
		$$
		\|\mathbf{v}\| = \max_{t\in [0,T_0]} \|\mathbf{v}(t)\|_{L^2(\Omega^\pm; \mathbb{R}^\mathcal{M})} +
		\|\, |\nabla_x \mathbf{v}|\, \|_{L^2(\Omega^{\pm,T_0})}.  
		$$
		
		Using  the Lipschitz continuity of $\mathbf{F}^\pm$ and the inequality 
		$$
		\int_{\mathcal{I}^f} \phi^2(x_1,0)\, dx_1  \le 2 \Bigg(\delta \int_{\Omega^\pm}|\nabla_x\phi|^2 \, dx + \frac{1}{\delta} \int_{\Omega^\pm}\phi^2 \, dx \Bigg) \quad \forall\, \phi \in H^1(\Omega^\pm),
		$$
		with any positive $\delta,$  we derive from  \eqref{recur-prob-1} the following inequality for the difference
		\begin{equation}\label{deffer-V_n}
			\mathbf{V}^\pm_n(x,t):= \mathbf{v}^\pm_{n+1}(x,t) - \mathbf{v}^\pm_n(x,t) 
		\end{equation}
		for any $t \in (0, T]$: 
		\begin{multline}\label{app-1}
			\int_{\Omega^\pm} |\mathbf{V}^\pm_n(x,t)|^2\, dx + \int_{\Omega^{\pm, t}} |\nabla_x \mathbf{V}^\pm_n|^2 \, dx\, d\tau 
			\\
			\le C_1 \int_{\Omega^{\pm, t}} |\mathbf{V}^\pm_n(x,\tau)|^2 \, dx\, d\tau + C_2 \int_{\Omega^{\pm, t}} |\mathbf{V}^\pm_{n-1}(x,\tau)|^2 \, dx\, d\tau + C_3 \, W^2_{n-1}(t) \, t.  
		\end{multline}
		
		Gronwall’s lemma applied to the inequality
		$$
		\int_{\Omega^\pm} |\mathbf{V}^\pm_n(x,t)|^2\, dx \le C_1 \int_{0}^{t}\int_{\Omega^\pm} |\mathbf{V}^\pm_n|^2 \, dx\, d\tau 
		+  C_2 \int_{\Omega^{\pm, t}} |\mathbf{V}^\pm_{n-1}(x,\tau)|^2 \, dx\, d\tau + C_3 \, W^2_{n-1}(t) \, t
		$$
		gives the estimate
		\begin{equation}\label{Gron-1}
			\int_{\Omega^\pm} |\mathbf{V}^\pm_n(x,t)|^2\, dx \le \exp(C_1 T) \, C_4 \Big(\int_{0}^{t}\int_{\Omega^\pm} |\mathbf{V}^\pm_{n-1}(x,\tau)|^2 \, dx\, d\tau + W^2_{n-1}(T_0) \, T \Big) \quad \text{for any } \ \ t \in [0,T_0]. 
		\end{equation}
		
		Now, it remains to apply Lemma~\ref{lemma-4-3} to \eqref{Gron-1} with  
		$$
		f_n(\tau) = \int_{\Omega^\pm} |\mathbf{V}^\pm_{n-1}(x,\tau)|^2 \, dx \quad \text{and} \quad  \alpha_n = 
		\exp(C_1 T) \, C_4 \, T \, W^2_{n-1}(T_0).
		$$ 
		It should be noted that, based on the results of item 2, $\lim_{n\to 0}W_{n-1}(T_0)= 0.$ 
		Thus, the series $\sum_{n=0}^{\infty} f_n(t) $ converges uniformly for $t\in[0,T_0],$ and from \eqref{app-1} it follows that
		$$
		\sum_{n=0}^{\infty} \int_{\Omega^{\pm, T_0}} |\nabla_x \mathbf{V}^\pm_n|^2 \, dx\, d\tau < +\infty .
		$$

		Now using the Cauchy criterion for the convergence of a sequence and the fact that $\{\mathbf{v}^\pm_n(x_1, 0, t)\}_{n\in \mathbb N_0}$ uniformly on $[0, \ell] \times [0, T_0]$ converges to $\mathbf{u}^\pm_0(x_1, t),$ we conclude that 
		$\{\mathbf{v}^\pm_n(x, t)\}$ converges to a vector-function $\mathbf{u}^\pm_0$
		in the space $X_{T_0},$  whose trace on $\mathcal{I}^f$ coincides with
		the continuous function $\mathbf{u}^\pm_0(x_1, t).$ Obviously, that for a.e. $t\in [0,T_0]$ the function  $\mathbf{u}^\pm_0$ has zero-trace on $\partial\Omega^\pm \setminus \mathcal{I}^f.$ We can also regard that
		\begin{equation}\label{conv4}
			\mathbf{v}^\pm_n    \rightarrow  \mathbf{u}^\pm_0 \quad \text{almost everywhere in} \ \ \Omega_\varepsilon^{\pm, T_0}.
		\end{equation}

		\smallskip
		\noindent
		{\bf 4.} Using the convergence results obtained in the point 2, we can pass to the limit as $n\to \infty$ in the representation 
		formula $\mathbf{z}^f_{n+1}(x_1,t) = \mathbb{S}\{\mathbf{v}^+_{n+1}, \mathbf{v}^-_{n+1},\mathbf{z}^f_n\}(x_1,t)$ 
		and obtain \eqref{eq24} for $(x_1, t) \in \mathcal{I}^{f, T_0}.$
		
		Now we should pass to the limit in the integral identity 
		\begin{multline}\label{in-1}
			\int_{\Omega^{\pm}} \mathbf{v}^{\pm}_n(x, T_0) \cdot \partial_t \boldsymbol{\phi}(x, T_0) \, dx	- \int_{\Omega^{\pm, T_0}} \mathbf{v}^{\pm}_n \cdot \partial_t \boldsymbol{\phi} \, dxdt + \mathbb{D}^\pm \int_{\Omega^{\pm, T_0}} \nabla_x  \mathbf{v}^{\pm}_n :  \nabla_x\boldsymbol{\phi}\,dxdt 
			\\
			= \int_{\Omega^{\pm, T_0}} \mathbf{F}^\pm(\mathbf{v}^\pm_{n-1},x, t) \cdot \boldsymbol{\phi}\, dxdt  \, - 
			\upharpoonleft\!\! {S}^\pm\!\!\upharpoonright_1 \int_{\mathcal{I}^{f, T_0}} \widetilde{\boldsymbol{\Upsilon}}^\pm\big(\mathbf{v}^\pm_{n-1}(x_1,0,t), \mathbf{z}^{f}_{n-1}(x_1,t), x_1, t \big) \cdot \boldsymbol{\phi}(x_1,0,t)\, dx_1 dt
		\end{multline}
		for any function $\boldsymbol{\phi} \in H^1\big(0,T_0; H^1(\Omega^\pm; \mathbb{R}^\mathcal{M})\big),$ which corresponds to problem \eqref{recur-prob-1}.
		
		Based on the convergence results above and assumption $\mathbf{A3},$ it is easy to find the limits of all integrals in \eqref{in-1}.
		As a result, we get that 
		\begin{multline}\label{in-2}
			\int_{\Omega^{\pm}} \mathbf{u}^{\pm}_0(x, T_0) \cdot \partial_t \boldsymbol{\phi}(x, T_0) \, dx	- \int_{\Omega^{\pm, T_0}} \mathbf{u}^{\pm}_0 \cdot \partial_t \boldsymbol{\phi} \, dxdt + \mathbb{D}^\pm \int_{\Omega^{\pm, T_0}} \nabla_x  \mathbf{u}^{\pm}_0 :  \nabla_x\boldsymbol{\phi}\,dxdt 
			\\
			= \int_{\Omega^{\pm, T_0}} \mathbf{F}^\pm(\mathbf{u}^\pm_{0},x, t) \cdot \boldsymbol{\phi}\, dxdt  \, - 
			\upharpoonleft\!\! {S}^\pm\!\!\upharpoonright_1 \int_{\mathcal{I}^{f, T_0}} \widetilde{\boldsymbol{\Upsilon}}^\pm\big(\mathbf{u}^\pm_{0}(x_1,0,t), \mathbf{w}^{f}(x_1,t), x_1, t \big) \cdot \boldsymbol{\phi}(x_1,0,t)\, dx_1 dt .
		\end{multline}
		Due to the equivalent definitions of weak solutions (see, e.g. \cite[Chapt. III]{Showalter}), $\mathbf{u}^{\pm}_0$ satisfies identity \eqref{eq23} for  $t\in (0,T_0).$
		Thus, the vector-function $(\mathbf{u}^+_0, \mathbf{w}^f, \mathbf{u}^-_0)$ is a weak solution to problem \eqref{homo-problem} existing on the time interval $[0, T_0].$ 
		
		\smallskip
		\noindent
		{\bf 5.}
		Since $\mathbf{u}^\pm_0(t) \in H^1(\Omega^\pm; \mathbb{R}^\mathcal{M})$ with zero-trace on $\partial\Omega^\pm \setminus \mathcal{I}^f$ for almost every $t \in [0, T_0],$ we may, upon redefining $T_0$ if necessary, assume that $\mathbf{u}^\pm_0(T_0) \in H^1(\Omega^\pm; \mathbb{R}^\mathcal{M}).$ This allows us to repeat the previous arguments and extend the solution to the time interval $[T_0, 2 T_0].$ To do this,
		we consider the iteration sequences \eqref{sequences}, where we set  
		$\mathbf{z}^f_0 = \mathbf{w}^f(x_1, T_0),$ $\mathbf{v}^\pm_0 = \mathbf{u}^\pm_0(T_0),$
		and define the remaining terms as solution to systems \eqref{recur-prob-1} and \eqref{recur-prob-2}, subject to the initial conditions 
		$$
		\mathbf{v}^\pm_{n} \big|_{t=T_0}  =  \mathbf{u}^\pm_0(T_0) \ \  \text{in} \ \ \Omega^\pm, \qquad
		\mathbf{z}^f_n(x_1,T_0)= \mathbf{w}^f(x_1, T_0), \ \ x_1\in [0, \ell],
		$$
		respectively. 
		For the corresponding differences defined in \eqref{not-1}, \eqref{not-2} and \eqref{deffer-V_n}, 
		we derive the following estimates, analogous to those obtained previously:
		\begin{gather*}
			W_n(t) \le C_2 \, (t - T_0)^{\frac{1}{4}} \, W_{n-1}(t) + C_3 \, T \, \int_{T_0}^{t} W_{n-1}(\tau)\, d\tau,
			\\ 
			\int_{\Omega^\pm} |\mathbf{V}^\pm_n(x,t)|^2\, dx \le \exp(C_1 T) \, C_4 \Big(\int_{T_0}^{t}\int_{\Omega^\pm} |\mathbf{V}^\pm_{n-1}(x,\tau)|^2 \, dx\, d\tau + W^2_{n-1}(2 T_0) \, T \Big) 
		\end{gather*}
		for all $t \in [T_0, 2 T_0].$ These estimates imply the convergence of the iteration sequences.
		
		By repeating this extension process a finite number of times, we ultimately construct a weak solution to the homogenized problem \eqref{homo-problem} on the entire interval $[0, T].$  
		
		\smallskip
		\noindent
		{\bf 6.} To demonstrate uniqueness, suppose there exist two weak solutions  $(\mathbf{u}^+, \mathbf{w}^f, \mathbf{u}^-)$
		and $(\tilde{\mathbf{u}}^+, \tilde{\mathbf{w}}^f, \tilde{\mathbf{u}}^-)$ of \eqref{homo-problem}. 
		Define  the functions
		\begin{equation*}
			Z(t) := \max_{k;\, x_1\in [0,\ell];\, \tau\in [0, t]} \big|w^f_{k}(x_1, \tau) - \tilde{w}^f_{k}(x_1, \tau)\big|,
		\end{equation*}
		$$
		V(t) :=  \max_{k; \,x_1\in [0,\ell]; \, \tau\in [0, t]} \big|u^+_{k}(x_1, 0, \tau) - \tilde{u}^+_{k}(x_1, 0, \tau)\big|
		+
		\max_{k;\, x_1\in [0,\ell]; \, \tau\in [0, t]} \big|u^-_{k}(x_1, 0, \tau) - \tilde{u}^-_{k}(x_1, 0, \tau)\big|
		$$
		for all $t\in [0,T].$
		Arguing similarly to item 2, we derive the estimate
		$$
		Z(t) + V(t) \le C_2 \, t^{\frac{1}{4}} \, \left(Z(t) + V(t)\right) + C_1 \, \int_{0}^{t} \left(Z(\tau) + V(\tau)\right)\, d\tau.
		$$
		Choosing $T_0 > 0$ such that $C_2\, (T_0)^{\frac{1}{4}} < 1,$ we obtain 
		$$
		Z(t) + V(t) \le C_3 \, \int_{0}^{t} \left(Z(\tau) + V(\tau)\right)\, d\tau,
		$$
		from which, by Gronwall's lemma, it follows that $Z(t) = V(t) = 0$ for $t\in [0,T_0].$
		
		Now consider the difference
		$
		\mathbf{V}^\pm(x,t):= \mathbf{u}^\pm(x,t) - \tilde{\mathbf{u}}^\pm(x,t). 
		$
		Proceeding as in item 3, we derive the estimate
		\begin{equation*}
			\int_{\Omega^\pm} |\mathbf{V}^\pm(x,t)|^2\, dx \le C_4 \, \int_{0}^{t}\int_{\Omega^\pm} |\mathbf{V}^\pm(x,\tau)|^2 \, dx\, d\tau  \quad \text{for any } \ \ t \in [0,T_0]. 
		\end{equation*}
		Applying Gronwall's lemma again yields   
		$\mathbf{V}^\pm(x,t) =0$ for  $t \in [0, T_0].$
		
		Repeating the above argument on successive intervals, we conclude that the functions $Z(t),$ $V(t),$ $\mathbf{V}^\pm(x,t)$ vanish on each interval $[nT_0, (n+1)T_0]$ for $n \in \mathbb{N},$ until the entire interval $[0,T]$ is covered.
		Hence, 
		$$
		(\mathbf{u}^+, \mathbf{w}^f, \mathbf{u}^-)
		= (\tilde{\mathbf{u}}^+, \tilde{\mathbf{w}}^f, \tilde{\mathbf{u}}^-) \quad \text{for all} \ \ t\in [0,T],
		$$ 
		which proves uniqueness. 
	\end{proof}
	
	\begin{lemma}\label{Th-4-4}
		The weak solution to problem \eqref{homo-problem} is classical, and 
		$$
		\mathbf{u}^\pm_0 \in H^{3+\alpha,\,2+\alpha/2}\big(\overline{\Omega^\pm}\times[0,T]; \mathbb{R}^\mathcal{M} \big),
		\quad  \mathbf{w}^f_0 \in C^{3}\big([0,\ell]\times[0,T]; \mathbb{R}^\mathcal{M}\big).
		$$  
	\end{lemma}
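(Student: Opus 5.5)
We prove the regularity by a bootstrap that alternates between the parabolic subproblems in $\Omega^\pm$ and the transport system on $\mathcal{I}^f$, each time feeding the currently known regularity of one component into the linear problem for the other.

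The starting point is Theorem~\ref{Th-4-1}. There $(\mathbf{u}^\pm_0,\mathbf{w}^f_0)$ was obtained as the limit of the iterates \eqref{recur-prob-1}--\eqref{recur-prob-2}, and these iterates lie in $H^{2+\gamma,1+\gamma/2}$ and $C^2$ respectively. First we upgrade the uniform convergence of the traces to uniform bounds in Hölder norms. From \eqref{eq25}, $\mathbf{w}^f_0$ is given by integrals along the characteristics $x_1-\hat{\mathrm{v}}t=\mathrm{const}$, and $\hat{\mathrm v}>0$ by Proposition~\ref{Prop-4-2}. Differentiating \eqref{eq25} in $x_1$ and $t$ therefore expresses $\partial\mathbf{w}^f_0$ through the derivatives of $\widehat{\mathbf F}$ and of the traces $\mathbf{u}^\pm_0(\cdot,0,\cdot)$. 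Conversely, for the bulk problems with Neumann data $\widetilde{\boldsymbol\Upsilon}^\pm(\mathbf{u}^\pm_0,\mathbf{w}^f_0,x_1,t)$ on $\mathcal{I}^f$, we use the linear parabolic Schauder theory for mixed Dirichlet--Neumann problems in a rectangle (\cite[Chapt.~IV]{Lad_Sol_Ura_1968}). The corners are treated by odd reflection in $x_1$ across $x_1=0,\ell$, justified because the data have compact support in $x_1$ and vanish near the corners. This theory gives $\mathbf{u}^\pm_0\in H^{1+\gamma+1,\dots}$ whenever the Neumann datum lies in $H^{1+\gamma,(1+\gamma)/2}(\overline{\mathcal{I}^{f,T}})$.

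The bootstrap proceeds as follows.
\begin{enumerate}
\item Since $\mathbf{w}^f_0$ and the traces are continuous, a weak solution with bounded Neumann data is Hölder continuous up to $\mathcal{I}^f$ by De Giorgi--Nash / $L^p$ parabolic theory. Hence $\mathbf{u}^\pm_0\in H^{1+\gamma,(1+\gamma)/2}$, and $\mathbf{u}^\pm_0(\cdot,0,\cdot)$ is $C^{1+\gamma}$ in $x_1$ and $C^{(1+\gamma)/2}$ in $t$.
\item Insert this into \eqref{eq25}. The time regularity of the trace is only $1/2+$, but along a characteristic $\tau\mapsto(\hat{\mathrm v}(\tau-t)+x_1,\tau)$ one differentiation in $x_1$ is available and the integral in $\tau$ regains smoothness in $t$. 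Together with a Gronwall argument for $\partial_{x_1}\mathbf{w}^f_0$ (the equation is semilinear in $\mathbf{w}^f_0$ and Lipschitz), this gives $\mathbf{w}^f_0\in C^{1+\gamma}$.
\item Feed this back: the Neumann datum is now $C^{1+\gamma}$ in $(x_1,t)$, so Schauder gives $\mathbf{u}^\pm_0\in H^{2+\gamma,1+\gamma/2}$ and then $H^{3+\gamma,\,\dots}$. The trace becomes $C^{2+\gamma}$ in $x_1$ and $C^{1+\gamma/2}$ in $t$, hence $\mathbf{w}^f_0\in C^2$ and in turn $\mathbf{u}^\pm_0\in H^{3+\alpha,2+\alpha/2}$.
\item A final pass through \eqref{eq25}, using the $C^3$-smoothness of $\boldsymbol\Phi^\pm,\boldsymbol\Psi$ from \textbf{A3}, yields $\mathbf{w}^f_0\in C^3$.
\end{enumerate}
Classicality then follows from $\mathbf{u}^\pm_0\in H^{2+\alpha,1+\alpha/2}$ and $\mathbf{w}^f_0\in C^1$.

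The main obstacle is the \emph{compatibility conditions}. For $H^{3+\alpha,2+\alpha/2}$ regularity one needs compatibility of orders zero and one at $t=0$ on $\mathcal{I}^f$ and at the corners $x_1\in\{0,\ell\}$. For $C^3$ of $\mathbf{w}^f_0$ one needs the derivatives to match across the characteristic $x_1=\hat{\mathrm v}t$ emanating from $(0,0)$ that separates $\mathcal{D}_1$ and $\mathcal{D}_2$. We will handle this with \eqref{init-conds+}. Since all interface reactions vanish for $t\le\delta$, the hyperbolic right-hand side vanishes there, so $\mathbf{w}^f_0\equiv0$ on $[0,\delta]$ and on the whole region $\mathcal{D}_2\cap\{x_1<\hat{\mathrm v}(t-\delta)\}$ near the separating characteristic. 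Consequently, on each side of that characteristic all derivatives vanish identically, which ensures the matching. Near $t=0$ the bulk problems have homogeneous Neumann data, and with $\mathfrak f^\pm(x,0)=0$ and \eqref{init-conds} the parabolic compatibility conditions of the required order hold. The compact support in $x_1$ keeps corners away from any nonzero data, so the reflection argument is legitimate. A secondary technical point is step~2, namely the loss of time regularity of the trace. If the characteristic argument is too delicate there, we instead first prove $\partial_t\mathbf{u}^\pm_0\in H^{1+\gamma,\dots}$ by differentiating the system in $t$; this is legitimate because the data vanish for $t\le\delta$. We then read off $t$-regularity of $\mathbf{w}^f_0$ from the PDE $\partial_t\mathbf{w}^f_0=-\hat{\mathrm v}\,\partial_{x_1}\mathbf{w}^f_0+\widehat{\mathbf F}$.
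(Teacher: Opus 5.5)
\textbf{Verdict: genuine gap.} Your overall plan is the paper's plan: you alternate between Schauder theory for the bulk problems and integration along characteristics via \eqref{eq25} for the interface system. On the bulk side this uses odd reflection across $x_1=0,\ell$ and compatibility from \eqref{init-conds}, \eqref{init-conds+} and the compact supports. As written, however, the bootstrap never gets started.

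In step~1 you pass from ``$\mathbf{w}^f_0$ and the traces are continuous'' to $\mathbf{u}^\pm_0\in H^{1+\gamma,(1+\gamma)/2}$. This inference fails:
De Giorgi--Nash gives only $\mathbf{u}^\pm_0\in H^{\gamma,\gamma/2}$ up to $\mathcal{I}^f$. A Neumann datum $\mathbf{G}^\pm=\upharpoonleft\!\! {S}^\pm\!\!\upharpoonright_1\widetilde{\boldsymbol{\Upsilon}}^\pm(\mathbf{u}^\pm_0(x_1,0,t),\mathbf{w}^f_0(x_1,t),x_1,t)$ that is merely continuous does not even give a $C^1$ solution up to the boundary. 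The $W^{2,1}_p$ route does not help either, since it needs $\mathbf{G}^\pm$ in the trace space $W^{1-1/p,(1-1/p)/2}_p$. Step~2 differentiates \eqref{eq25} in $x_1$ and uses $\partial_{x_1}\mathbf{u}^\pm_0(\cdot,0,\cdot)$, so everything after step~1 rests on this unproved claim.

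The missing rung is the paper's Step~2. Once the traces lie in $H^{\alpha,\alpha/2}$, the map $(x_1,t)\mapsto\mathbf{Q}(\mathbf{s},x_1,t)=\widehat{\mathbf F}(\mathbf{u}^+_0(x_1,0,t),\mathbf{u}^-_0(x_1,0,t),\mathbf{s},x_1,t)$ is H\"older uniformly in $\mathbf{s}$ and Lipschitz in $\mathbf{s}$. A Gr\"onwall argument along the characteristics in $\mathcal{D}_1$ and $\mathcal{D}_2$, interpolating through $\Sigma$, then yields $\mathbf{w}^f_0\in H^{\alpha,\alpha/2}$. Only then is $\mathbf{G}^\pm$ H\"older, and the Neumann Schauder estimate gives $\mathbf{u}^\pm_0\in H^{1+\alpha,(1+\alpha)/2}$. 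Your count of one derivative gained over the Neumann datum is correct here. After that, your steps~2--3 can run.

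Two further steps need repair.

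\emph{Matching across $\Sigma$.} Your argument is wrong as stated. In $\mathcal{D}_2\cap\{x_1<\hat{\mathrm v}(t-\delta)\}$ the backward characteristics leave $x_1=0$ after time $\delta$, when the reactions are switched on, so $\mathbf{w}^f_0$ need not vanish there; that set is not a neighbourhood of $\Sigma$ anyway. The correct reason is this. $\widehat{\mathbf F}$ vanishes for $t\le\delta$ and for $x_1<a$, where $[a,b]\subset(0,\ell)$ contains the $x_1$-supports of $\boldsymbol{\Phi}^\pm$ and $\boldsymbol{\Psi}$. Hence $\mathbf{w}^f_0\equiv\mathbf{0}$ on $\{x_1<a\}\cup\{t\le\delta\}$. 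Near $\Sigma$ the solution is then given by a single characteristic formula started from zero data on $\{x_1=a/2\}$, so no derivative jumps arise across $\Sigma$.

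\emph{The final step to $C^3$.} $\mathbf{w}^f_0\in C^3$ requires $\partial_t^2$ of the traces $\mathbf{u}^\pm_0(x_1,0,t)$. Already for $\partial_t w+\hat{\mathrm v}\,\partial_{x_1}w=\phi(t)$ one has $w=\Phi(t)-\Phi(t-x_1/\hat{\mathrm v})$ in $\mathcal{D}_2$ with $\Phi'=\phi$, so $w$ is exactly one $t$-derivative smoother than the source. A Neumann datum in $H^{2+\alpha,1+\alpha/2}$ gives only $\mathbf{u}^\pm_0\in H^{3+\alpha,(3+\alpha)/2}$, whose trace is not $C^2$ in $t$. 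So your claim in step~3, ``in turn $\mathbf{u}^\pm_0\in H^{3+\alpha,2+\alpha/2}$'', does not follow.

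You need Schauder for the $t$-differentiated bulk problem satisfied by $\partial_t\mathbf{u}^\pm_0$. This problem is compatible because $\mathbf{G}^\pm\equiv\mathbf{0}$ for $t\le\delta$ and $\partial_t\mathbf{u}^\pm_0|_{t=0}=\mathbf{F}^\pm(\mathbf{0},x,0)+\boldsymbol{\mathfrak F}^\pm(x,0)=\mathbf{0}$. You mention this tool only as a fallback for step~2, but it is indispensable at the last rung.
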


	\begin{proof} {\bf 1.} In point 3 of Theorem~\ref{Th-4-1} we proved that $\mathbf{u}_0^\pm$ is continuous on $\overline{\mathcal{I}^{f,T}}.$
		Since $\mathbf{u}_0^\pm|_{\partial\Omega^{\pm, T}  \setminus \mathcal{I}^{f,T}} =0,$ the function $\mathbf{u}_0^\pm$ is bounded on $\partial\Omega^{\pm,T}$. Hence Theorem 2.1 of \cite[Chapt.~V]{Lad_Sol_Ura_1968} gives $\mathbf{u}_0^\pm\in L^\infty(\Omega^{\pm,T})$, and Theorem 1.1 of the same chapter yields
		$\mathbf{u}_0^\pm\in H^{\alpha,\alpha/2}(\Omega^{\pm,T})
		$
		for some $\alpha\in(0,1)$.
		
		We  regard $\mathbf{u}_0^\pm$ as a weak solution to the linear problem
		\begin{equation}\label{par-problem}
			\left\{\begin{array}{rcll}
				\partial_t \mathbf{u}^\pm_0 -   \mathbb{D}^\pm \Delta \mathbf{u}^\pm_0
				& = & \mathbf{B}^\pm(x, t),  & (x,t) \in \Omega^{\pm, T} ,
				\\[2mm]
				\mathbf{u}^\pm_0  & = & \mathbf{0}, & \text{on} \ \ \partial\Omega^{\pm, T}  \setminus \mathcal{I}^{f,T},
				\\[2mm]
				\pm\, \mathbb{D}^\pm \partial_{x_2} \mathbf{u}^\pm_0&=& \mathbf{G}^\pm(x_1, t), & (x_1, t) \in \mathcal{I}^{f,T},
				\\[2mm]
				\mathbf{u}^\pm_0 \big|_{t=0} & = & \mathbf{0}, & \text{in} \ \ \Omega^\pm  ,
			\end{array}\right.
		\end{equation}
		where
		$\mathbf{B}^\pm(x, t) := \mathbf{F}^\pm(\mathbf{u}^\pm_0, x, t) + \boldsymbol{\mathfrak{F}}^\pm(x,t)$ and
		$\mathbf{G}^\pm(x_1, t) := \upharpoonleft\!\! {S}^\pm\!\!\upharpoonright_1\widetilde{\boldsymbol{\Upsilon}}^\pm\big(\mathbf{u}^\pm_0(x_1,0,t), \mathbf{w}^f_0(x_1,t), x_1, t \big).$
		
		By assumption ${\bf A3},$ the vector-function $\mathbf{B}^\pm \in L^\infty(\Omega^{\pm, T}; \mathbb{R}^M)$ and has compact support in $\Omega\times(0,T).$ Due to the continuity of $\mathbf{u}_0^\pm$ and $\mathbf{w}_0^f$ on $\overline{\mathcal{I}^{f,T}},$
		$\mathbf{G}^\pm$ is continuous on $\overline{\mathcal{I}^{f, T}}.$ Moreover, by assumption ${\bf A3},$ it has compact support in $(0,\ell)$ and $\mathbf{G}^\pm(\cdot,t)\equiv\mathbf{0}$ for $t\in[0,\delta].$ These properties prevent the formation of corner singularities at Dirichlet–Neumann junctions, and therefore standard boundary-value theory gives
		$
		\mathbf{u}_0^\pm\in W^{2,1}_2(\Omega^{\pm,T};\mathbb{R}^\mathcal{M}),
		$
		cf. \cite{solonnikov1971boundary,grisvard1985elliptic}.
		By the trace theorem for parabolic Sobolev spaces (see \cite{Lad_Sol_Ura_1968,lions1968non}), the trace of $\mathbf{u}_0^\pm$ on the Neumann boundary belongs to the anisotropic Sobolev space
		\[
		\mathbf{u}_0^\pm(\cdot,0,\cdot)\in W^{3/2,\,3/4}_2\big((0,\ell)\times(0,T);\mathbb{R}^\mathcal{M}\big).
		\]
		Using the parabolic Sobolev–Hölder embedding on bounded domains, we have 
		\[
		W^{3/2,\,3/4}_2\big((0,\ell)\times(0,T)\big)\hookrightarrow H^{\alpha,\alpha/2}\big([0,\ell]\times[0,T]\big) \quad \text{for every} \ \ \alpha\in(0,1),
		\]
		and consequently
		\begin{equation}\label{Hord-1}
			\mathbf{u}_0^\pm(x_1,0,t)\in H^{\alpha,\alpha/2}\big([0,\ell]\times[0,T];\mathbb{R}^\mathcal{M}\big).
		\end{equation}

		\smallskip
		\noindent
		\textbf{2.} \emph{H\"older continuity of $\mathbf{w}^f_0$.} We consider the function $\mathbf{w}^f_0$ as a continuous solution to the mixed semi-linear problem
		\begin{equation}\label{mixed-problem}
			\left\{\begin{array}{l}
				\partial_t \mathbf{w}^f_0  + \hat{\mathrm{v}} \, \partial_{x_1} \mathbf{w}^f_0  = \mathbf{Q}\big(\mathbf{w}^f_0(x_1,t), x_1, t\big), \quad 
				(x_1,t) \in \mathcal{I}^{f, T},
				\\[3pt]
				\mathbf{w}^f_0(0,t)= \mathbf{0}, \ \ t\in [0,T], \qquad \mathbf{w}^f_0(x_1,0)= \mathbf{0}, \ \ x_1\in [0, \ell],
			\end{array}\right.
		\end{equation}
		where 
		$
		\mathbf{Q}(\mathbf{s}, x_1, t) := \widehat{\mathbf {F}}\big(\mathbf{u}^+_0(x_1,0,t), \mathbf{u}^-_0(x_1,0,t), \mathbf{s}, x_1, t\big).
		$
		We show that $\mathbf{w}^f_0\in H^{\alpha,\alpha/2}\big([0,\ell]\times[0,T];\mathbb{R}^\mathcal{M}\big)$, with a bound depending only on the data in \eqref{mixed-problem} and the structural constants in the assumptions.
		
		At first, we show that  for each fixed $\mathbf{s}$, $(x_1,t)\mapsto \mathbf{Q}(\mathbf{s},x_1,t)$ is in $H^{\alpha,\alpha/2}\big([0,\ell]\times[0,T]; \mathbb{R}^\mathcal{M}\big)$ with a bound uniform in $\mathbf{s}.$
		Taking into account the definition of $\widehat{\mathbf {F}}$ (see \eqref{hat F}), 
		it suffices to verify the mixed Hölder regularity, e.g., for
		$\Phi_k^+(\mathbf{u}^+_0(x_1,0,t),\mathbf{s},\xi,x_1,t).$
		Using the triangle inequality and assumptions ${\bf A3}$ (see also Remark~\ref{rem-2-3}), we obtain for any $(x_1,t),(x_1',t')\in[0,\ell]\times[0,T]$, 
		\begin{multline*}
			\big| \Phi_k^+(\mathbf{u}^+_0(x'_1,0,t'),\mathbf{s},\xi,x'_1,t') - \Phi_k^+(\mathbf{u}^+_0(x_1,0,t),\mathbf{s},\xi,x_1,t)\big|
			\\
			\le
			\big| \Phi_k^+(\mathbf{u}^+_0(x'_1,0,t'),\mathbf{s},\xi,x'_1,t') - \Phi_k^+(\mathbf{u}^+_0(x_1,0,t),\mathbf{s},\xi,x'_1,t')\big|
			\\
			+
			\big| \Phi_k^+(\mathbf{u}^+_0(x_1,0,t),\mathbf{s},\xi,x'_1,t') - \Phi_k^+(\mathbf{u}^+_0(x_1,0,t),\mathbf{s},\xi,x_1,t)\big|
			\\
			\le C_1 |\mathbf{u}^+_0(x'_1,0,t') - \mathbf{u}^+_0(x_1,0,t)| + C_2 |x'_1 -x_1| + C_3 |t' - t| \le C_0 \big(|x'_1 -x_1|^\alpha + |t' - t|^{\alpha/2}\big),
		\end{multline*}
		with a constant $C_0$ independent of $(x_1,t),(x_1',t')$ and of $\mathbf{s}$.
		
		Now we prove that the solution $\mathbf{w}^f_0 = (w_{0,1},\ldots,w_{0,\mathcal{M}})$ to problem \eqref{mixed-problem} is H\"older continuous. We do this for the $k$-th component $w_{0,k},$ which is a continuous solution to the problem
		\[
		\left\{\begin{array}{l}
			\partial_t w_{0,k}(x_1,t) + \hat{\mathrm{v}} \,\partial_{x_1} w_{0,k}(x_1,t) = Q_k\big(\mathbf{w}^f_0(x_1,t),x_1,t\big), \quad (x_1,t)\in(0,\ell)\times(0,T),\\[3pt]
			w_{0,k}(0,t)=0,  \ \  t\in[0,T],\qquad w_{0,k}(x_1,0)=0, \ \  x_1\in[0,\ell].
		\end{array}\right.
		\]
		
		For $(x_1,t)\in(0,\ell)\times(0,T)$, define
		\[
		y_1(\tau;x_1,t):=x_1+\hat{\mathrm{v}}\,(\tau-t),\qquad 
		\mathbf{z}(\tau;x_1,t):=\mathbf{w}^f_0\big(y_1(\tau;x_1,t),\tau\big),\quad 
		z_k(\tau;x_1,t):=w_{0,k}\big(y_1(\tau;x_1,t),\tau\big).
		\]
		Then (see \eqref{eq25})
		\[
		w_{0,k}(x_1,t)=\int_{0}^{t} Q_k\!\left(\mathbf{z}(\tau;x,t),\,y_1(\tau;x_1,t),\,\tau\right)\,d\tau, \quad (x_1,t)\in \mathcal{D}_1,
		\]
		\[
		w_{0,k}(x_1,t)=\int_{t-\frac{x_1}{\hat{\mathrm{v}}}}^{t} Q_k\!\left(\mathbf{z}(\tau;x,t),\,y_1(\tau;x_1,t),\,\tau\right)\,d\tau, \quad (x_1,t)\in \mathcal{D}_2.
		\]
		It should be noted that the two integral formulas coincide on the characteristic interface 
		$\Sigma :=
		\{(x_1,t)\in[0,\ell]\times[0,T]:\, t= x_1/\hat{\mathrm{v}} \},
		$
		which separates $\mathcal{D}_1$ and $\mathcal{D}_2$.

		\subsubsection*{Spatial Hölder estimate on $\mathcal{D}_1$ at fixed time}
		Fix $t\in[0,T]$ and $x_1,x_1'\in[0,\ell]$ such that $t\le x_1/\hat{\mathrm{v}}$ and $t\le x_1'/\hat{\mathrm{v}};$ so $(x_1,t),(x_1',t)\in\mathcal{D}_1$. Then
		\[
		w_{0,k}(x_1,t)- w_{0,k}(x_1',t)=\int_0^t \Big(Q_k\big(\mathbf{z}(\tau;x_1,t),y_1(\tau;x_1,t),\tau\big) - Q_k\big(\mathbf{z}(\tau;x_1',t),y_1(\tau;x_1',t),\tau\big)\Big)\,d\tau.
		\]
		Using the spatial $\alpha$-Hölder continuity of $Q_k$ (uniform in $\mathbf{s}$) and its Lipschitz continuity in $\mathbf{s}$, we get
		\begin{equation}\label{Hor-2}
			|w_{0,k}(x_1,t)- w_{0,k}(x_1',t)|\le C_1 \int_0^t |\mathbf{z}(\tau;x_1,t)- \mathbf{z}(\tau;x_1',t)|\,d\tau + t\,C_0\,|x_1-x_1'|^\alpha.
		\end{equation}
		To bound the integral term, note that for every $\tau\in[0,t]$,
		\[
		z_k(\tau;x_1,t)-z_k(\tau;x_1',t)
		=\int_0^\tau \Big(Q_k\big(\mathbf{z}(\eta;x_1,t),y_1(\eta;x_1,t),\eta\big)
		- Q_k\big(\mathbf{z}(\eta;x_1',t),y_1(\eta;x_1',t),\eta\big)\Big)\,d\eta,
		\]
		whence, by the same splitting and the Lipschitz/Hölder bounds on $Q_k$,
		$$
		|\mathbf{z}(\tau;x_1,t)- \mathbf{z}(\tau;x_1',t)| \le \tilde{C}_1 \int_0^\tau |\mathbf{z}(\eta;x_1,t)- \mathbf{z}(\eta;x_1',t)|\,d\eta + \tau\, \tilde{C}_0\,|x_1-x_1'|^\alpha.
		$$
		By Grönwall’s inequality,
		\begin{equation}\label{Hor-3}
			|\mathbf{z}(\tau;x_1,t)- \mathbf{z}(\tau;x_1',t)| 
			\le \frac{\tilde{C}_0}{\tilde{C}_1}\big(e^{\tilde{C}_1 \tau}-1\big)\,|x_1-x_1'|^\alpha .
		\end{equation}
		Substituting \eqref{Hor-3} in \eqref{Hor-2} yields  
		$$
		|w_{0,k}(x_1,t)- w_{0,k}(x_1',t)|\le C_2 \, |x_1-x_1'|^\alpha,
		$$
		with the constant $C_2$ independent of $x_1,x_1'$ and $t$.
		
		\subsubsection*{Spatial Hölder estimate on $\mathcal{D}_2$ at fixed time}
		Fix $t\in[0,T]$ and $x_1,x_1'\in[0,\ell]$ with $t> x_1/\hat{\mathrm{v}}$ and $t> x_1'/\hat{\mathrm{v}}$. Then
		\[
		\begin{aligned}
			w_{0,k}(x_1,t)-w_{0,k}(x_1',t) & = \int_{t-\frac{x_1}{\hat{\mathrm{v}}}}^t Q_k\!\left(\mathbf{z}(\tau;x_1,t),y_1(\tau;x_1,t),\tau\right)\,d\tau
			-\int_{t-\frac{x_1'}{\hat{\mathrm{v}}}}^t Q_k\!\left(\mathbf{z}(\tau;x_1',t),y_1(\tau;x_1',t),\tau\right)\,d\tau
			\\
			&=\int_{t-\frac{x_1'}{\hat{\mathrm{v}}}}^t \!\Big(Q_k(\mathbf{z}(\tau;x_1,t),y_1(\tau;x_1,t),\tau)-Q_k(\mathbf{z}(\tau;x_1',t),y_1(\tau;x_1',t),\tau)\Big)\,d\tau
			\\
			&\quad + \int_{t-\frac{x_1}{\hat{\mathrm{v}}}}^{t-\frac{x_1'}{\hat{\mathrm{v}}}} Q_k(\mathbf{z}(\tau;x_1,t),y_1(\tau;x_1,t),\tau)\,d\tau.
		\end{aligned}
		\]
		The first term is handled exactly as in $\mathcal{D}_1$, giving the bound $ C_3\,|x_1-x_1'|^\alpha.$
		For the second term, use boundedness of $Q_k$ and the length of the interval:
		\[
		\left|\int_{t-\frac{x_1}{\hat{\mathrm{v}}}}^{t-\frac{x_1'}{\hat{\mathrm{v}}}} Q_k(\mathbf{z}(\tau;x_1,t),y_1(\tau;x_1,t),\tau)\,d\tau\right|
		\le \|Q_k\|_{L^\infty}\,\frac{|x_1-x_1'|}{\hat{\mathrm{v}}}
		\le C_4\,|x_1-x_1'|^\alpha.
		\]
		As a result, we obtain
		\[
		|w_{0,k}(x_1,t)-w_{0,k}(x_1',t)|\le C_5\,|x_1-x_1'|^\alpha,
		\]
		with $C_5$ independent of $x_1,x_1',t$.
		
		If $x_1\in \mathcal{D}_1$ and $x'_1\in \mathcal{D}_2,$ the interpolating via the interface point $x^*_1 = \hat{\mathrm{v}}\, t $ on $\Sigma$ gives 
		\[
		|w_{0,k}(x'_1,t)-w_{0,k}(x_1,t)|
		\le |w_{0,k}(x'_1,t)-w_{0,k}(x^*_1,t)| + |w_{0,k}(x^*_1,t)-w_{0,k}(x_1,t)|.
		\]
		Each term is controlled by the estimates of two previous cases, giving the same bound $C\,|x'_1- x_1|^{\alpha}$.
		
		The H\"older estimate in time for a fixed $x_1$ is obtained by applying the same technique first in $\mathcal{D}_1$ and then in $\mathcal{D}_2$, taking into account the $\frac{\alpha}{2}$-H\"older continuity of the function $\mathbf{Q}$.
		
		\medskip
		
		Combining the spatial estimates on $\mathcal{D}_1$ and $\mathcal{D}_2$ with the temporal estimate at fixed $x_1$, we conclude that for each $k=1,\dots,M$,
		\[
		|w_{0,k}(x_1,t)-w_{0,k}(x_1',t')|
		\le C\big(|x_1-x_1'|^\alpha + |t'-t|^{\alpha/2}\big),
		\]
		for all $(x_1,t),(x_1',t')\in[0,\ell]\times[0,T]$, with $C$ depending only on the data of \eqref{mixed-problem} and the structural constants from {\bf A3}. Therefore,
		$\displaystyle 
		\mathbf{w}^f_0\in H^{\alpha,\alpha/2}\big([0,\ell]\times[0,T];\mathbb{R}^\mathcal{M}\big).
		$
		
		\smallskip
		
		\noindent
		\textbf{3.} \emph{H\"older regularity of $\mathbf{u}^\pm_0$.} Returning to the linear problem \eqref{par-problem} and taking into account results obtained in the previous points, we see that   
		$$
		\mathbf{B}^\pm \in H^{\alpha,\alpha/2}\big(\Omega^{\pm, T}; \mathbb{R}^\mathcal{M}\big) \quad \text{and} \quad \mathbf{G}^\pm \in  H^{\alpha,\alpha/2}\big([0,\ell]\times[0,T]; \mathbb{R}^\mathcal{M}\big). 
		$$
		
		Then, by standard parabolic Schauder theory (see, e.g., \cite[Ch.~IV]{Lad_Sol_Ura_1968}, \cite[Ch.~IV]{Lieberman}), $\mathbf{u}_0^\pm$ has interior $H^{2+\alpha,1+\alpha/2}$-regularity in $\Omega^\pm\times(0,T)$.
		
		The only obstruction to global $H^{2+\alpha,1+\alpha/2}$ up to $\overline{\Omega^\pm}\times[0,T]$ is the presence of mixed (Dirichlet–Neumann) corner points $(0,0)$ and $(\ell,0)$ on the interface $\mathcal{I}^{f,T}$. To remove this, we reflect across the vertical Dirichlet edges $x_1=0$ and $x_1=\ell$.
		
		Consider the corner $(0,0).$ In the extended domain $\widetilde{\Omega}^{\pm, T} = (-\ell, \ell)\times (0,\mathfrak{h}^\pm) \times (0,T)$ we define the odd extension $\widetilde{\mathbf{u}}^\pm_0,$ $\widetilde{\mathbf{B}}^\pm$ and $\widetilde{\mathbf{G}}^\pm$
		in $x_1$ (componentwise), i.e.,
		\[
		\widetilde{u}(x_1,x_2,t)=\begin{cases}
			u(x_1,x_2,t), & (x_1,x_2,t)\in [0, \ell]\times [0,\mathfrak{h}^\pm] \times [0,T],
			\\
			-\,u(-x_1,x_2,t), & (x_1,x_2,t) \in [-\ell, 0]\times [0,\mathfrak{h}^\pm] \times [0,T].
		\end{cases} 
		\]
		A direct computation shows 
		$\partial_t \widetilde{\mathbf{u}}^\pm_0 - \mathbb{D}^\pm \Delta\widetilde{\mathbf{u}}^\pm_0
		=  \widetilde{\mathbf{B}}^\pm$ in $\widetilde{\Omega}^{\pm, T}$, $\widetilde{\mathbf{u}}^\pm_0 = \mathbf{0}$ on 
		$\partial\widetilde{\Omega}^{\pm,T} \setminus \big((-\ell,\ell) \times(0,T)\big),$ and
		$\pm\, \mathbb{D}^\pm \partial_{x_2}\widetilde{\mathbf{u}}^\pm_0 = \widetilde{\mathbf{G}}^\pm$ on  $(-\ell,\ell)\times (0,T)$. 
		
		Since $\mathbf{B}^\pm$ and $\mathbf{G}^\pm$ are Hölder and compactly supported in the corresponding regions, the odd extensions $\widetilde{\mathbf{B}}^\pm$ and $\widetilde{\mathbf{G}}^\pm$ 
		preserve $H^{\alpha,\alpha/2}$ on $\widetilde{\Omega}^{\pm,T}$ and $(-\ell,\ell)\times[0,T]$, respectively. Moreover, the assumption $\mathbf{G}^\pm(\cdot,t)\equiv \mathbf{0}$ on $[0,\delta]$ implies $\widetilde{\mathbf{G}}^\pm(\cdot,t)\equiv \mathbf{0}$ on $[0,\delta]$, giving initial-time compatibility for the Neumann flux on the reflected interval.
		
		In the reflected configuration, the corner $(0,0)$ is converted into a \emph{smooth Neumann boundary point} for the extended problem along $x_2=0$ with $x_1\in(-\ell,\ell)$, and data vanish in a neighborhood of $(0,0)$. 
		Hence classical parabolic Schauder boundary estimates (Neumann case) yield
		\[
		\widetilde{\mathbf{u}}^\pm_0 \in H^{2+\alpha,\,1+\alpha/2}\quad \text{in a neighborhood of }\ (0,0)\times[0,T].
		\]
		Restricting back to $x_1\ge 0$ gives the same regularity to ${\mathbf{u}}^\pm_0$ near $(0,0)$.
		
		Repeating the same odd-reflection construction across $x_1=\ell$ (after translating coordinates), we obtain  the same regularity to  ${\mathbf{u}}^\pm_0$ in a neighborhood of $(\ell,0).$ 
		
		Away from the corners, the boundary is flat and of pure type. 
		On Dirichlet patches, the homogeneous condition $\mathbf{u}^\pm_0=0$ together with the Hölder regularity of $\mathbf{B}^\pm$ yields $H^{2+\alpha,1+\alpha/2}$ up to the boundary. 
		On the Neumann segment $\{x_2=0\}$ away from the endpoints, $\mathbf{G}^\pm \in H^{\alpha,\alpha/2}([0,\ell]\times[0,T]; \mathbb{R}^\mathcal{M})$ and the vanishing $\mathbf{G}^\pm\equiv \mathbf{0}$ on $[0,\delta]$ ensure compatibility with the initial condition, and thus $H^{2+\alpha,1+\alpha/2}$ regularity up to $[0,\ell]\times\{x_2=0\}\times [0,T]$. 
		These properties of $\mathbf{B}^\pm$ and $\mathbf{G}^\pm$ guarantee time–Hölder continuity at the initial slice. 
		Consequently,
		$
		\mathbf{u}^\pm_0 \in H^{2+\alpha,\,1+\alpha/2}\big(\overline{\Omega^\pm}\times[0,T]; \mathbb{R}^\mathcal{M}\big).
		$
		
		\smallskip
		
		\noindent
		\textbf{4.} \emph{Increasing the regularity of $\mathbf{u}^\pm_0$ and $\mathbf{w}^f_0.$} 
		We return to the mixed semi-linear problem \eqref{mixed-problem}. For each fixed $\mathbf{s}$
		the map $(x_1,t)\mapsto \mathbf{Q}(\mathbf{s},x_1,t)$ belongs to 
		$
		H^{1+\alpha,\,1+\alpha/2}\big([0,\ell]\times[0,T];\mathbb{R}^\mathcal{M}\big),
		$
		with a bound uniform in $\mathbf{s}$. Hence, by Theorem 2 of \cite{Myshkis_1960}, the solution $\mathbf{w}^f_0$ to problem \eqref{mixed-problem} is classical; in particular
		$
		\mathbf{w}^f_0\in C^1\big([0,\ell]\times[0,T];\mathbb{R}^\mathcal{M}\big),
		$
		and, arguing as in point {\bf 2}, $\mathbf{w}^f_0\in H^{1+\alpha,\,1+\alpha/2}\big([0,\ell]\times[0,T];\mathbb{R}^\mathcal{M}\big)$.
		
		Combining this regularity of $\mathbf{w}^f_0$ with the previously established regularity of $\mathbf{u}^\pm_0$ and assumptions {\bf A3}, the data of the linear problems \eqref{par-problem} satisfy
		$$
		\mathbf{B}^\pm\in H^{1+\alpha,\,1+\alpha/2}\big(\overline{\Omega^\pm}\times[0,T];\mathbb{R}^\mathcal{M}\big), \qquad
		\mathbf{G}^\pm\in H^{1+\alpha,\,1+\alpha/2}\big([0,\ell]\times[0,T];\mathbb{R}^\mathcal{M}\big).
		$$
		Arguing as in point {\bf 3} (interior and boundary Schauder estimates together with the reflection near mixed corners), we obtain the improved regularity
		$
		\mathbf{u}^\pm_0\in H^{3+\alpha,\,2+\alpha/2}\big(\overline{\Omega^\pm}\times[0,T];\mathbb{R}^\mathcal{M}\big).
		$
		
		The higher regularity of $\mathbf{u}^\pm_0$ implies that, for each fixed \(\mathbf{s}\),  
		$\mathbf{Q}(\mathbf{s},\cdot,\cdot) \in 
		H^{2+\alpha,\,2+\alpha/2}\big([0,\ell]\times[0,T];\mathbb{R}^\mathcal{M}\big),
		$
		uniformly in $\mathbf{s}$. Due to assumptions {\bf A3} the function $\mathbf{Q}$ has $C^3$-regularity in $\mathbf{s}.$ By the same argument used in Remark 3.2 of \cite{Mel-Roh_JMAA-2024} (see also the classical regularity results in \cite{Friedrichs1948}), this improved regularity of the right-hand side upgrades \(\mathbf{w}_0^f\) to
		$
		\mathbf{w}_0^f\in C^{2}\big([0,\ell]\times[0,T];\mathbb{R}^\mathcal{M}\big).
		$
		Finally, repeating these arguments again completes the successive regularity improvements and leads to the claimed regularity of $\mathbf{w}_0^f.$
	\end{proof}	
	
	
	\subsection{Boundary-layer asymptotics}\label{BLPs}
	The regular part \eqref{regul} of the asymptotic expansion does not satisfy the boundary condition on the right side $\Gamma^T_{\ell,\varepsilon}$ of $\partial\Omega^f_\varepsilon$, leaving residuals of order $\mathcal{O}(1)$ on that part of the boundary. To compensate for these residuals and enforce the boundary condition on $\Gamma^T_{\ell,\varepsilon}$ in \eqref{probl}, we introduce the boundary-layer ansatzes
	\begin{equation}\label{prim+}
		\Pi_{0,k}\left(\frac{x_1 - \ell}{\varepsilon}, \frac{x_2}{\varepsilon}, t\right) + \varepsilon \, \Pi_{1,k}\left(\frac{x_1 - \ell}{\varepsilon}, \frac{x_2}{\varepsilon}, t\right)
		+ \ldots, \quad k \in \{1,\ldots,\mathcal{M}\},
	\end{equation}
	which are localized in a neighborhood of $\Gamma^T_{\ell,\varepsilon}$.
	
	According to assumption ${\bf A3}$, the boundary conditions on the oscillating and perforated parts  of $\partial\Omega^f_\varepsilon$ vanish in the narrow right-hand region $\Omega^f_{\varepsilon, \ell -\delta_1} := \Omega^f_\varepsilon \cap \{x \colon x_1 \in (\ell -\delta_1, \ell)\},$ $\delta_1 >0.$ In a neighborhood of $\Omega^f_{\varepsilon, \ell -\delta_1}$, we introduce the local coordinates $\zeta = (\zeta_1, \zeta_2)$ defined by
	$
	\zeta_1 = \frac{x_1 - \ell}{\varepsilon}, \quad \zeta_2 = \frac{x_2}{\varepsilon}.
	$
	Passing formally to the limit $\varepsilon \to 0$, we obtain a $1$-periodic perforated semi‑infinite strip, perforated in the horizontal direction and having $1$-periodically varying upper and lower boundaries
	(see Fig.~\ref{fig-4})
	\[
	\mathfrak{C}  := \left\{\zeta \colon \zeta_1 \in (-\infty, 0), \quad \zeta_2 \in \left(-h_-(\zeta_1), \, h_+(\zeta_1)\right)\right\} \setminus \bigcup_{k \in \mathbb{Z}} \left(k \, \vec{\boldsymbol{e}}_1 + {T}_0\right).
	\]
	\begin{figure}[htbp]
		\begin{center}
			\includegraphics[width=11cm]{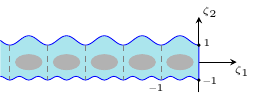}
		\end{center}
		
		\vspace{-0.3cm}
		\caption{Semi-strip $\mathfrak{C}$ with perforations and upper/lower boundaries all  varying with period$1$}\label{fig-4}
	\end{figure}
	
	Let $\Gamma_\ell$ denote the interval 
	$
	\left\{\zeta \colon \zeta_1 = 0, \  \zeta_2 \in (-1,1)\right\},
	$
	which is the vertical part of the boundary $\partial\mathfrak{C}$ on the right.
	
	Substituting the boundary-layer ansatz for each $k$ into the corresponding differential equation and boundary conditions of problem \eqref{probl} within $\Omega^f_{\varepsilon, \ell -\delta_1}$, taking into account the $1$-periodicity of the coefficients and Remark~\ref{rem-2-4},  and collecting terms of equal powers of $\varepsilon$, we arrive at the following problem for the leading-order coefficient $\Pi_{0,k}$:
	\begin{equation}\label{prim+probl+0}
		\left\{\begin{array}{rcll}
			\mathcal{L}_{\zeta}\big(\Pi_{0,k}(\zeta,t)\big)
			& =    & 0,
			& \quad \zeta\in \mathfrak{C},
			\\[2mm]
			-\big(\mathbb{D}(\zeta)\nabla_{\zeta}\Pi_{0,k}(\zeta,t)\big) \cdot \boldsymbol{\nu}(\zeta) & =
			& 0,
			& \quad \zeta\in \partial\mathfrak{C} \setminus \Gamma_\ell,
			\\[2mm]
			\Pi_{0, k}(0, {\zeta}_2,t) & =
			& \Phi_{0, k}(t),
			& \quad {\zeta}_2\in \Gamma_\ell,
			\\[2mm]
			\Pi_{0, k}(\zeta,t) & \to
			& 0,
			& \quad \zeta_1\to -\infty,
		\end{array}\right.
	\end{equation}
	where $\Phi_0(t) := q^{\ell}_k(t) - w_{0,k}(\ell,t),$ the variable $t\in [0,T]$ is considered as a parameter, and
	$$
	\mathcal{L}_{\zeta}U := -\nabla_{\zeta}\!\cdot\!\big(\mathbb{D}(\zeta)\nabla_{\zeta}U(\zeta)\big)+ \nabla_{\zeta}\!\cdot\!\big(\overrightarrow{V}(\zeta)\, U(\zeta)\big).
	$$
	
The behavior of solutions to elliptic problems at infinity in unbounded cylindrical domains has been extensively studied; see, for example, \cite{Kon-Ole_1983,Koz-Maz-Ros_97,Land-Pan-1977,Nazarov-1982,Naz99,Ole-Ios-1981,Ole_book_1996,Piat-1984}. Most works treat rectilinear cylinders, where the product geometry allows the use of Fourier/Floquet techniques, explicit Green’s kernels, and operator-valued multipliers. These methods may fail or become substantially more technical in curvilinear or perforated settings. In particular, \cite{Nazarov-1982} employs a Gelfand-type transform to analyze problems (elliptic in the Douglis–Nirenberg sense) in periodic cylinders with oscillating boundaries, while \cite{Ole-Ios-1981} establishes stabilization results for divergence-form equations in periodically perforated cylinders. Related behavior in partially perforated layers is addressed in \cite{Melnyk-2026}. Problems involving first-order terms require additional care and have been treated in \cite{Nazarov-1982,Naz99,Ole_book_1996,Piat-1984,Piat-2009,Mel-Kle-2022,Mel-Roh_Non-Diff-2024}.

In what follows, under assumptions \(\mathbf{A1}\) and \(\mathbf{A2}\), and using the Hopf–Oleinik boundary point lemma \cite{Hopf-1952,Oleinik-1952},  the maximum principle, and Schauder estimates, 
we provide a short proof of the exponential decay to zero of the solution to problem \eqref{prim+probl+0}, together with its first and second derivatives, as \(\zeta_1 \to -\infty\).

	To study problem \eqref{prim+probl+0} we omit the indices $0, k$, assume that $\Phi(t) \equiv 1$ (otherwise, we introduce a new function $\widetilde{\Pi}_{N} = \Pi_{N}/\Phi(t)$), and introduce the following notation:
	\begin{gather*}
		\mathfrak{C}_{m,m + N} := \mathfrak{C} \cap \{\zeta\colon \ \zeta_1\in (-m-N, \, -m)\}, \quad m \in \mathbb{N}_0, \ \ N \in \mathbb{N},	
		\\
		\Gamma_{-N} = \left\{\zeta \colon \zeta_1 = -N, \ \zeta_2 \in (-1,1)\right\}.
	\end{gather*}
	It is easy to see that $\mathfrak{C}_{0,1} = Y_0 - \vec{\boldsymbol{e}}_1$ and  $\Gamma_0 = \Gamma_\ell.$ 
	
	\begin{lemma}\label{lemma-4-5}
		Problem \eqref{prim+probl+0} admits a unique solution \(\Pi\) in the H\"older space
		\(H^{2+\mu}\big(\overline{\mathfrak{C}}\big)\), for some \(\mu \in (0,1),\) and this solution
		decays exponentially at infinity together with its first and second derivatives:
		\begin{equation}\label{bv-1}
			|\Pi(\zeta)| + |\nabla\Pi(\zeta)| + |D^2\Pi(\zeta)| = \mathcal{O}\big(\exp(\delta_0 \, \zeta_1)\big)
			\quad \text{as} \quad \zeta_1 \to -\infty,
		\end{equation}
		for some constant \(\delta_0 > 0\).
	\end{lemma}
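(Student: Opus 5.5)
We plan to follow the route announced before the lemma: maximum principle and Hopf--Oleinik lemma for a contraction from cell to cell, then Schauder estimates for the derivatives. Since $\nabla_\zeta\cdot\overrightarrow{V}=0$ and $\overrightarrow{V}\cdot\boldsymbol{\nu}=0$ on $S^\pm\cup\partial T_0$ (from \eqref{potential0}), the operator $\mathcal{L}_\zeta$ is $-\nabla\cdot(\mathbb{D}\nabla U)+\overrightarrow{V}\cdot\nabla U$, and the conormal condition $\mathbb{D}\nabla U\cdot\boldsymbol{\nu}=0$ holds on $\partial\mathfrak{C}\setminus\Gamma_\ell$. This operator has no zeroth-order term, so the weak and strong maximum principles apply. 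The smoothness of $\mathbb{D}$ and $\overrightarrow{V}$ on $\overline{Y}_0$ and the $C^{2,\mu}$ boundaries supply Schauder estimates uniformly in every cell $\mathfrak{C}_{m,m+1}$.

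\textbf{Existence and uniqueness.} First solve the truncated problems on $\mathfrak{C}_{0,N}$ with data $\Pi=1$ on $\Gamma_0$, $\Pi=0$ on $\Gamma_{-N}$, and conormal condition elsewhere. These have unique solutions by Lax--Milgram: the convection form $\int\phi\,\overrightarrow{V}\cdot\nabla\phi=\tfrac12\int_{\partial}\phi^2\,\overrightarrow{V}\cdot\boldsymbol{\nu}$ contributes only at the Dirichlet ends, where $\phi=0$, so coercivity holds. By the maximum principle (Hopf excludes Neumann extrema), $0\le\Pi_N\le1$. The $\Pi_N$ are monotone in $N$ by comparison and are uniformly bounded in $H^{2+\mu}$ on compacts by local Schauder estimates, including near the corners of $\Gamma_\ell$, where $\mathbb{D}$ is diagonal constant, $h_\pm=1$, and the data is constant. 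Hence $\Pi_N\to\Pi$ in $C^2_{loc}$. The decay established below shows that the limit tends to $0$. Uniqueness follows from the maximum principle applied to the difference of two solutions on $\mathfrak{C}_{0,N}$ with $N\to\infty$, using the decay at infinity.

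\textbf{Exponential decay; main step.} Set $M_m:=\max_{\overline{\Gamma}_{-m}}\Pi$ and $m_m:=\min_{\overline{\Gamma}_{-m}}\Pi$, and let $\mathrm{osc}_m:=M_m-m_m$. By the maximum principle on $\mathfrak{C}_{m,\infty}$ (Neumann sides and Hopf), $\Pi$ on $\mathfrak{C}_{m+1,\infty}$ lies between $m_m$ and $M_m$. The key claim is a uniform contraction $\mathrm{osc}_{m+1}\le q\,\mathrm{osc}_m$ with $q<1$ independent of $m$. To prove it, note that $(\Pi-m_m)/\mathrm{osc}_m$ takes values in $[0,1]$ on $\mathfrak{C}_{m,\infty}$. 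Applying the Harnack inequality (interior, and boundary Harnack up to the Neumann portions by reflection/Hopf--Oleinik) to both $\Pi-m_m$ and $M_m-\Pi$ on $\mathfrak{C}_{m,m+2}$ gives the contraction. Periodicity makes the Harnack constant identical in every cell, which is the crucial uniformity. I expect this step to be the main obstacle: the sets $\Gamma_{-m}$ cut through a perforated, oscillating geometry, and one must make sure the Harnack chain stays inside the connected set $\mathfrak{C}_{m,m+2}$ and reaches the Neumann boundary. If needed, I would argue by compactness instead: any failure of a uniform $q$ would, after translation by integers, produce a nonconstant limit solution attaining an interior maximum, contradicting the strong maximum principle.

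From the contraction, $\mathrm{osc}_m\le Cq^m$, so $\Pi\to c$ as $\zeta_1\to-\infty$ at an exponential rate. To identify $c$, integrate the equation over $\mathfrak{C}_{m,m+1}$. The total flux $J=\int_{\Gamma_{-m}}(-\mathbb{D}\nabla\Pi\cdot\boldsymbol{e}_1+v_1\Pi)$ is independent of $m$. Letting $m\to\infty$, Schauder estimates give $\nabla\Pi\to0$, hence $J=c\int_{-1}^1\mathrm{v}_0>0$ when $c\neq0$. This is compatible only with $c=\lim\Pi_N$, where $\Pi_N(-N)=0$; it forces a flux computation on the truncated problems that I still need to carry out. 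This is a second delicate point: the requirement $\Pi\to0$ in \eqref{prim+probl+0} must be shown consistent with the inflow direction, possibly through the adjoint problem with constant solution $\rho=1$, as in Proposition~\ref{Prop-4-1}. Finally, apply Schauder estimates to $\Pi-c$ on $\mathfrak{C}_{m,m+1}$, with constants uniform by periodicity; this yields \eqref{bv-1} for the first and second derivatives with $\delta_0=-\ln q$.
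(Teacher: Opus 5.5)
There is a genuine gap in your main decay step. You claim that the maximum principle on the half-infinite cylinder $\mathfrak{C}_{m,\infty}$ keeps $\Pi$ between $m_m$ and $M_m$ on $\mathfrak{C}_{m+1,\infty}$. This is false in this problem. The drift points toward $\Gamma_0$, so $\zeta_1\to-\infty$ is the inflow end. Constants are then bounded solutions that the data on $\Gamma_{-m}$ cannot detect, and a bounded solution on $\mathfrak{C}_{m,\infty}$ is not determined by its trace on $\Gamma_{-m}$.

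Take $T_0=\emptyset$, $h_\pm\equiv1$, $\mathbb{D}=d\,I$ and $\mathrm{v}_0$ constant, so $\overrightarrow{V}=(\mathrm{v}_0,0)$.
\begin{itemize}
\item The function $1-e^{\mathrm{v}_0(\zeta_1+m)/d}$ solves the problem on $\mathfrak{C}_{m,\infty}$, vanishes on $\Gamma_{-m}$, and is positive inside.
\item The solution of \eqref{prim+probl+0} is $\Pi=e^{\mathrm{v}_0\zeta_1/d}$. Here $M_m=m_m$, yet $\Pi<m_m$ on $\mathfrak{C}_{m+1,\infty}$.
\end{itemize}
So $(\Pi-m_m)/\mathrm{osc}_m\in[0,1]$ fails, and Harnack cannot be applied to $\Pi-m_m$. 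Your compactness fallback, a translated limit attaining an interior maximum, relies on the same false bound.

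In a genuinely periodic geometry the oscillation contraction is not even a property of bounded solutions. The function $u=c+(a-c)\,\Pi(\zeta_1+m,\zeta_2)$ has constant trace $a$ on $\Gamma_{-m}$. Its oscillation on $\Gamma_{-m-1}$ is $|a-c|\,\mathrm{osc}_{\Gamma_{-1}}\Pi$, which is nonzero as soon as $\Pi$ is not constant on $\Gamma_{-1}$. Any decay proof must therefore use the condition at $-\infty$, namely the zero Dirichlet datum on $\Gamma_{-N}$ in the truncated problems. Your main step never uses it, and that is exactly why you could not identify the limit constant $c$. The Harnack chain through the perforations, which you flagged as the main obstacle, is not the real difficulty.

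The paper works only with the one-sided quantity $a_m=\max_{\overline{\Gamma}_{-m}}\Pi$. The bound $\Pi\le a_m$ on $\mathfrak{C}_{m,\infty}$ is legitimate: pass to the limit in the maximum principle for $\Pi_N\ge0$ on $\mathfrak{C}_{m,N}$, where $\Pi_N=0$ on $\Gamma_{-N}$. It then uses a multiplicative comparison. By periodicity, $\Pi_{N+1}(\zeta_1-1,\zeta_2)/\max_{\Gamma_{-1}}\Pi_{N+1}$ solves the truncated problem on $\mathfrak{C}_{0,N}$ with data $\le1$ on $\Gamma_0$ and $0$ on $\Gamma_{-N}$, so it lies below $\Pi_N$. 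Letting $N\to\infty$ gives $a_{m+1}\le\gamma\,a_m$ with $\gamma=a_1$. Hence $0\le\Pi\le Ce^{\delta_0\zeta_1}$ with $\delta_0=-\ln\gamma$. The derivative bounds then follow from uniform Schauder estimates on unit cells, as you propose.

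The only nontrivial input is $\gamma<1$, i.e.\ $\Pi\not\equiv1$. Once that holds, the strong maximum principle and Hopf--Oleinik give $\Pi<1$ on $\overline{\Gamma}_{-1}$. The paper obtains $\Pi\not\equiv1$ from the energy identity $\int_{\Gamma_0}d_{11}\,\partial_{\zeta_1}\Pi_N\,d\zeta_2\ge\tfrac12\int_{-1}^{1}\mathrm{v}_0\,d\zeta_2>0$ together with $C^1$ convergence up to $\Gamma_0$. Your postponed flux computation would do the same job:
\begin{itemize}
\item the flux of $\Pi_N$ equals $-\int_{\Gamma_{-N}}d_{11}\partial_{\zeta_1}\Pi_N\,d\zeta_2\le0$;
\item this passes to the limit, so the flux of $\Pi$ is $\le0$;
\item if $\Pi\equiv1$, that flux would equal $\int_{-1}^1\mathrm{v}_0\,d\zeta_2>0$, a contradiction.
\end{itemize}
Your existence argument (truncation, monotonicity, $C^2_{loc}$ convergence) and your uniqueness argument are fine.
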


	\begin{proof}{\bf 1.} For each $N \in \mathbb{N}$ we consider the problem 
		\begin{equation}\label{prim+probl+N}
			\left\{\begin{array}{rcll}
				\mathcal{L}_{\zeta}\big(\Pi_{N}(\zeta,t)\big)
				& =    & 0,
				& \quad \zeta\in \mathfrak{C}_{0,N},
				\\[2mm]
				-\big(\mathbb{D}(\zeta)\nabla_{\zeta}\Pi_{N}(\zeta,t)\big) \cdot \boldsymbol{\nu}(\zeta) & =
				& 0,
				& \quad \zeta\in \partial\mathfrak{C}_{0,N} \setminus \big(\Gamma_0 \cup \Gamma_{-N}\big),
				\\[2mm]
				\Pi_{N}\big|_{\zeta\in \Gamma_0} = 1, & & &\Pi_{N}\big|_{\zeta\in \Gamma_{-N}} = 0.  
			\end{array}\right.
		\end{equation}
		By virtue of assumption ${\bf A2},$ the corresponding homogeneous problem has only trivial solution. Therefore, by the Fredholm alternative, problem \eqref{prim+probl+N} has a unique weak solution. 
		
		Taking into account the assumptions on the functions $h_\pm$ and conditions ${\bf A1}$ and ${\bf A2}$, and proceeding similarly to point 3 in  the proof of Lemma~\ref{Th-4-4}, we can transform the corners on $\partial\mathfrak{C}_{0,N}$ into smooth Dirichlet points by applying even extensions in $\zeta_2$ through the corresponding horizontal sides.  Therefore, by Schauder theory, we obtain that $\Pi_N$ belongs to the H\"older space $H^{2+\mu}\big(\overline{\mathfrak{C}_{0,N}}\big).$ 
		
		By the maximum principle, the solution $\Pi_N$ cannot attain its minimum or maximum inside of $\mathfrak{C}_{0,N}.$ 
		In addition, the extremum cannot be attained on $\partial\mathfrak{C}_{0,N} \setminus \big(\Gamma_0 \cup \Gamma_{-N}\big),$ as the Hopf–Oleinik boundary point lemma combined with the uniform positive definiteness of the matrix $\mathbb{D}$ ensures that the conormal derivative $\big(\mathbb{D}\nabla_{\zeta}\Pi_{N}\big) \cdot \boldsymbol{\nu}$ is nonzero at such points.
		Consequently, we conclude that $0 \le \Pi_N(\zeta) \le 1$ for all $\zeta \in \overline{\mathfrak{C}_{0,N}}.$

		\smallskip
		
		{\bf 2.} Consider a smooth cutoff function $\chi_{\delta_1}(\zeta_1),$ defined for  $\zeta_1 \le 0,$ such that $\chi_{\delta_1}(0) =1$ and
		$\chi_{\delta_1}(\zeta_1) = 0$ for $\zeta_1 \le -\delta_1,$ where the positive number $\delta_1$ is chosen so that 
		$\mathfrak{C}_{0,N} \cap \big((-\delta_1,0)\times (-1, 1)\big)= (-\delta_1,0)\times (-1, 1).$  This is possible due to the assumptions on the functions $h_-$ and$h_+,$ and the structure of the periodicity cell $Y_0.$ Define $\widetilde{\Pi}_N := \Pi - \chi_{\delta_1};$ then $\widetilde{\Pi}_N$  is a weak solution to the problem
		\begin{equation}\label{prim+probl+N+}
			\left\{\begin{array}{rcll}
				\mathcal{L}_{\zeta}\big(\widetilde{\Pi}_{N}\big)
				& =    & \widetilde{F},
				& \quad \zeta\in \mathfrak{C}_{0,N},
				\\[2mm]
				-\big(\mathbb{D}(\zeta)\nabla_{\zeta}\widetilde{\Pi}_{N}\big) \cdot \boldsymbol{\nu}(\zeta) & =
				& 0,
				& \quad \zeta\in \partial\mathfrak{C}_{0,N} \setminus \big(\Gamma_0 \cup \Gamma_{-N}\big),
				\\[2mm]
				\widetilde{\Pi}_{N}\big|_{\zeta\in \Gamma_0} = 0, & & &\widetilde{\Pi}_{N}\big|_{\zeta\in \Gamma_{-N}} = 0,
			\end{array}\right.
		\end{equation}
		where $\widetilde{F} := -\mathcal{L}_{\zeta}\big(\chi_{\delta_1}(\zeta_1)\big).$ Multiplying the differential equation in \eqref{prim+probl+N+} by $\widetilde{\Pi}_N$ and integrating over $\mathfrak{C}_{0,N}$ by parts, we deduce
		the equality
		\begin{equation}\label{bv-2}
			\int_{\mathfrak{C}_{0,N}}\big(\mathbb{D}(\zeta)\nabla_{\zeta}\widetilde{\Pi}_N\big) \cdot \nabla_{\zeta}\widetilde{\Pi}_N\, d\zeta
			= \int_{\mathfrak{C}_{0,N}}\widetilde{F} \, \widetilde{\Pi}_N\, d\zeta,
		\end{equation}
		To justify integration by parts, we use assumption ${\bf A2},$ which yields
		\[
		\int_{\mathfrak C_{0,N}}\nabla_{\zeta}\cdot (\overrightarrow V\, \widetilde{\Pi}_N)\, \widetilde{\Pi}_N\,d\zeta
		=\tfrac12\int_{\mathfrak C_{0,N}}\overrightarrow V\cdot\nabla_{\zeta}(\widetilde{\Pi}_N^2)\,d\zeta
		=\tfrac12\int_{\partial\mathfrak C_{0,N}\setminus (\Gamma_0 \cup \Gamma_{-N})}(\overrightarrow V\cdot\nu)\,\widetilde{\Pi}_N^2\,ds =0.
		\]
		Taking into account the uniform positive definiteness of the matrix $\mathbb{D}$ and the estimate $|\widetilde{\Pi}_N| \le 2,$  we derive  from \eqref{bv-2} the inequality
		\begin{equation*}
			\int_{\mathfrak{C}_{0,N}}|\nabla_{\zeta}\widetilde{\Pi}_N|^2\, d\zeta
			\le C_0 \int_{(-\delta_1,0)\times (-1, 1)}|\widetilde{F}| \, d\zeta =: C_1.
		\end{equation*}
		Therefore,
		\begin{equation}\label{bv-3}
			\int_{\mathfrak{C}_{0,N}}|\nabla_{\zeta}{\Pi}_N|^2\, d\zeta
			\le  C_2,
		\end{equation}
		where the constant $C_2$ is independent of $N.$
		
		Thus, for any $M\in \mathbb{N},$ the sequence $\{\Pi_N\}_{N\in \mathbb{N}}$ (extended by zero for $\zeta_1 \le - N$) is bounded in 
		the Sobolev space $W_2^1(\mathfrak{C}_{0,M}).$ Note that $L^2$-norm of $\Pi_N$ depends on $M.$ Hence, up to a subsequence,
		\begin{equation}\label{bv-4}
			\Pi_N \longrightarrow \Pi \quad \text{weakly in} \ \ W_2^1(\mathfrak{C}_{0,M}) \quad \text{as} \ \ N \to +\infty.
		\end{equation}
		Using this convergence, one can show that for any test function $\varphi \in W^1_2(\mathfrak{C}_{0,M})$ with zero traces on $\Gamma_0$ and $\Gamma_{-M},$ the following identity holds: 
		\begin{equation}\label{bv-5}
			\int_{\mathfrak{C}_{0,M}}\big(\mathbb{D}(\zeta)\nabla_{\zeta}\Pi(\zeta)\big) \cdot \nabla_{\zeta}\varphi(\zeta)\, d\zeta +
			\int_{\mathfrak{C}_{0,M}}\big(\overrightarrow{V}(\zeta) \cdot \nabla_{\zeta}\Pi(\zeta)\big) \, \varphi(\zeta)\, d\zeta =0,
		\end{equation}
		i.e., $\Pi$ is a weak solution to the problem 
		\begin{equation}\label{bv-6}
			\left\{\begin{array}{rcll}
				\mathcal{L}_{\zeta}\big(\Pi\big)
				& =    & 0,
				& \quad \zeta\in \mathfrak{C},
				\\[2mm]
				-\big(\mathbb{D}(\zeta)\nabla_{\zeta}\Pi\big) \cdot \boldsymbol{\nu}(\zeta) & =
				& 0,
				& \quad \zeta\in \partial\mathfrak{C} \setminus \Gamma_0,
				\\[2mm]
				\Pi(0, {\zeta}_2) & =
				& 1,
				& \quad {\zeta}_2\in \Gamma_0.
			\end{array}\right.
		\end{equation}
		Just as in the previous point, we justify that $\Pi \in H^{2+\mu}_{loc}(\overline{\mathfrak{C}}).$  In addition, it follows from \eqref{bv-3} and \eqref{bv-4} that $\nabla_{\zeta}\Pi \in L^2(\mathfrak{C})$ and 
		\begin{equation}\label{bv-7}
			\int_{\mathfrak{C}}|\nabla_{\zeta}{\Pi}|^2\, d\zeta	\le  C_2 .
		\end{equation}

		By the maximum principle, the sequence $\{\Pi_N\}_{N\in \mathbb{N}}$ is monotonically increasing for each $\zeta \in \overline{\mathfrak{C}},$ bounded above by $1$ and hence converges pointwise to $\Pi.$ By Dini’s theorem, this convergence is uniform on any compact subset of~$\overline{\mathfrak{C}}.$ Thus, $0 \le \Pi(\zeta) \le 1$ for all $\zeta\in \overline{\mathfrak{C}}.$
		
		For any compact $K \subset \overline{\mathfrak{C}}$ and for all $N \ge N_0,$ where $N_0$ is chosen such that $K \subset \overline{\mathfrak{C}_{0, N_0}},$ standard Schauder estimates (see. e.g. \cite{Lad_Ura_1968}) give
		\begin{equation}\label{uniform-bound}
			\|\Pi_N\|_{H^{2+\mu}(K)} \le C_1 \|\Pi_N\|_{C^0(K)} \le C_2,
		\end{equation}
		with the constant $C_2 := C_2(K)$  depending only on $K$ and the ellipticity and H\"older norms of coefficients. By Arzela-Ascoli Theorem we extract a subsequence (still denoted $\Pi_N$) such that 
		\begin{equation}\label{uniform-conv}
			\Pi_N  \to \Pi \quad\text{in} \ \ C^{2}(K) \quad \text{as} \ \ N \to +\infty.
		\end{equation}

		\smallskip
		
		{\bf 3.} Suppose there exists a point $\zeta_0 \in \mathfrak{C}$ such that $\Pi(\zeta_0)=1.$ Then, by the maximum principal, it follows that $\Pi \equiv 1$ in $\mathfrak{C}$ (clearly, there is no point in $\mathfrak{C}$ at which $\Pi$ achieves a local maximum less than $1).$
		Thus, the sequence $\{\Pi_N\}_{N\in \mathbb{N}}$ converges to $1$ in $C^{2}(K)$ on every  compact subset $ K \subset \overline{\mathfrak{C}} .$    In this step, we demonstrate that such a scenario leads to a contradiction.
		
		Considering   assumption ${\bf A2}$ and the fact that  $\mathrm{v}_0$ is a positive function on $ [-1, 1],$
		multiplying the differential equation in problem \eqref{prim+probl+N} by $\Pi_N$ and integrating over $\mathfrak{C}_{0,N}$ by parts,
		we obtain
		$$
		\int_{\mathfrak{C}_{0,N}}\big(\mathbb{D} \nabla_{\zeta}\Pi_N\big) \cdot \nabla_{\zeta}\Pi_N(\zeta)\, d\zeta - \int_{\Gamma_0}d_{11}(0,\zeta_2) \, \partial_{\zeta_1}\Pi_N(0,\zeta_2)\, d\zeta_2 + \frac12\int_{\Gamma_0} \mathrm{v}_0(\zeta_2)\, d\zeta_2 =0,
		$$	
		from which the inequality follows
		\begin{equation}\label{bv-8}
			0 < \frac12\int_{\Gamma_0} \mathrm{v}_0(\zeta_2)\, d\zeta_2  \le \int_{\Gamma_0}d_{11}(0,\zeta_2) \, \partial_{\zeta_1}\Pi_N(0,\zeta_2)\, d\zeta_2.	
		\end{equation}
		Passing to the limit in \eqref{bv-8}, we arrive at a contradiction: $\int_{\Gamma_0} \mathrm{v}_0(\zeta_2)\, d\zeta_2 =0.$ Thus, the assumption $\Pi (\zeta _0)=1$ for some $\zeta_0\in \mathfrak{C}$ is false, and hence $\Pi$  cannot be identically equal to $1$ in $\mathfrak{C}.$
		
		By the Hopf-Oleinik boundary point lemma, the solution $\Pi$ cannot attain the value $1$ at any point $\zeta_0 \in \partial\mathfrak{C} \setminus \Gamma_0.$ 
		Similarly, we prove that there are no points in $\mathfrak{C}$ and on $\partial\mathfrak{C} \setminus \Gamma_0$ at which $\Pi$ vanishes.
		Hence,
		\begin{equation}\label{bv-9}
			0 < \Pi(\zeta) < 1, \quad \zeta \in \mathfrak{C} \cup \big(\partial\mathfrak{C} \setminus \Gamma_0\big).
		\end{equation}
		
		\smallskip 
		
		{\bf 4.}  Here, we show that $\Pi$  decays exponentially to zero at infinity. Let us introduce the following notation: 
		\[
		a_m=\max_{\zeta_2\in[-1,1]}\Pi(-m,\zeta_2), \quad m \in \mathbb{N}; \quad \gamma: =a_1.
		\]
		Based on the maximum principle and on \eqref{bv-9}, we have monotonicity $a_m \ge a_{m+1} > 0$ for every $m \in \mathbb{N}$ and $\gamma < 1.$ 
		
		For each $N \in \mathbb{N}$ we define the function
		\begin{equation}\label{bv-10}
			U_N^{(1)}(\zeta) := \frac{\Pi_{N+1}(\zeta_1 -1, \zeta_2)}{\max_{\zeta_2\in [-1,1]}\Pi_{N+1}(-1,\zeta_2)}, \quad \zeta \in \overline{\mathfrak{C}_{0,N}}.
		\end{equation}
		Obviously, that the denominator is positive, not grater than $\gamma,$  and converges to $\gamma$ as
		$N\to\infty$. 
		By construction $U_N^{(1)}$ solves $\mathcal L_\zeta U_N^{(1)}=0$ in $\mathfrak C_{0,N}$, satisfies the homogeneous conormal condition on the lateral boundary, vanishes on $\Gamma_{-N}$ and $U_N^{(1)}|_{\Gamma_0}\le 1.$
		
		Comparing  $U_N^{(1)}$ with the solution $\Pi_N$ and using the maximum principle, we get $U_N^{(1)} \le \Pi_N$ in $\overline{\mathfrak{C}_{0,N}}.$ Then it follows from \eqref{bv-10} that $\Pi_{N+1}(-2, \zeta_2) \le \Pi_N(-1, \zeta_2) \, \gamma .$ Letting $N\to\infty$ and using uniform convergence of $\Pi_{N+1}$ to $\Pi$ on the fixed slices,
		we obtain 
		$
		a_2 \le a_1 \, \gamma = \gamma^2.
		$ 
		
		Iterating the same construction, we define
		\[
		U_N^{(k)}(\zeta):=\frac{\Pi_{N+k}(\zeta_1-1,\zeta_2)}
		{\max_{\zeta_2\in [-1,1]}\Pi_{N+k}(-1,\zeta_2)},
		\]
		compare $U_N^{(k)}$ with $\Pi_{N+k-1}$ by the maximum principle on the appropriate truncated domain, pass to the maximum on slices, and then let $N\to\infty$. The inductive step yields
		\[
		a_{m+1} \le a_{m} \, \gamma \le\gamma^{m+1}
		\quad\text{for every }m\in\mathbb N.
		\]
		
		By the maximum principle together with the Hopf–Oleinik lemma, the supremum of $\Pi$ on $\mathfrak{C}_{m, m+1}$ is attained on the horizontal slice $\Gamma_{m},$ hence
		\begin{equation}\label{bv-11}
			\sup_{\mathfrak{C}_{m, m+1}} \Pi \le a_m \le \gamma^m.	
		\end{equation}
		
		Set $\delta_0 :=-\ln\gamma>0.$ Then for any $\zeta \in \overline{\mathfrak{C}}$ with $\zeta_1\le - 2$ we choose $m=\lfloor -\zeta_1\rfloor\ge 2$ so that $\zeta\in \overline{\mathfrak{C}_{m, m+1}}$. Then
		\[
		\Pi(\zeta)\le \sup_{\mathfrak{C}_{m, m+1}}\Pi \le \gamma^m \le C \,  e^{\delta_0 \zeta_1}, \quad \text{where} \ \ C :=\gamma^{-1} .
		\]

		\smallskip
		\noindent\textbf{5.} Now we prove that the first and second derivatives of $\Pi$
		decay also exponentially as $\zeta_1\to-\infty$. Fix an integer $m\ge2.$ Due to $1$-periodicity structure of $\mathfrak{C}$ and  the $C^{2,\alpha}$-smoothness of the boundaries $\partial\mathfrak{C}\setminus \Gamma_0$,  the closure $\overline{\mathfrak{C}_{m, m+1}}$
		can be covered by a finite family (independent of $m$) of interior balls $B_r$ and boundary half‑balls $B_r^+$
		of a fixed radius $r>0$. 
		
		Standard interior Schauder estimates for the homogeneous equation
		$\mathcal L_\zeta\Pi=0$ provide positive constants $C_{\mathrm{int}}$ and $C_{\mathrm{int}}'$ (depending only
		on ellipticity and the $C^\alpha$ norms of the coefficients) such that for every interior ball $B_r$
		\[
		\|\nabla\Pi\|_{L^\infty(B_{r/2})} \le C_{\mathrm{int}}\,r^{-1}\|\Pi\|_{L^\infty(B_r)},
		\qquad
		\|D^2\Pi\|_{L^\infty(B_{r/2})} \le C_{\mathrm{int}}'\,r^{-2}\|\Pi\|_{L^\infty(B_r)}.
		\]
		Boundary Schauder estimates  give analogous bounds
		on each boundary half‑ball $B_r^+$ with constants $C_{\mathrm{bd}},C_{\mathrm{bd}}'$.
		Combining the finitely many local interior and boundary Schauder estimates on the cover of
		\(\mathfrak C_{m,m+1}\) by balls \(B_{r/2}\) and half‑balls \(B_{r/2}^+\) yields uniform constants  $C'_1,C'_2>0$ (independent of $m$) such that
		\[
		\sup_{\mathfrak{C}_{m, m+1}}|\nabla\Pi| \le \ C'_1 \sup_{\mathfrak{C}_{m-1, m+2}}\Pi \ \stackrel{\eqref{bv-11}}{\le}  \ C'_1\, \gamma^{m-1} ,
		\qquad
		\sup_{\mathfrak{C}_{m, m+1}}|D^2\Pi| \le \ C'_2 \sup_{\mathfrak{C}_{m-1, m+2}}\Pi \ \stackrel{\eqref{bv-11}}{\le}  \ C'_2 \gamma^{m-1}.
		\]
		
		Then, arguing similarly to item 4, we conclude that there exist positive constants
		$C_1,C_2,\delta_0$ such that
		\[
		|\nabla\Pi(\zeta)| \le C_1 e^{\delta_0 \zeta_1},\qquad
		|D^2\Pi(\zeta)| \le C_2 e^{\delta_0 \zeta_1} \qquad\text{for }\zeta_1\le -2.
		\]
	\end{proof}
	
	\begin{remark}
		From the proof, it follows that the conclusion of Lemma \ref{lemma-4-5} extends to the inhomogeneous problem \(\mathcal L_\zeta\Pi=f\), provided the right‑hand side satisfies
		\(f(\zeta)=\mathcal O\big(e^{\delta_f\zeta_1}\big)\) as \(\zeta_1\to-\infty\) for some \(\delta_f>0\),
		$f\in C^{1}(\overline{\mathfrak C}),$ and  $\partial_{\zeta_2}f =0$ at $\zeta_2 = \pm 1$ and $\zeta_1 \in (-\delta_1,0).$
	\end{remark}
	
	Based on this remark, we define the next term $\Pi_{1,k}$ in the boundary-layer asymptotics \eqref{prim+} as a solution to 
	the problem
	\begin{equation}\label{prim+probl+1}
		\left\{\begin{array}{rcll}
			\mathcal{L}_{\zeta}\big(\Pi_{1,k}(\zeta,t)\big)
			& =    & - \partial_t \Pi_{0,k}(\zeta,t),
			& \quad \zeta\in \mathfrak{C},
			\\[2mm]
			-\big(\mathbb{D}(\zeta)\nabla_{\zeta}\Pi_{1,k}(\zeta,t)\big) \cdot \boldsymbol{\nu}(\zeta) & =
			& 0,
			& \quad \zeta\in \partial\mathfrak{C} \setminus \Gamma_\ell,
			\\[2mm]
			\Pi_{1, k}(0, {\zeta}_2,t) & =
			& 0,
			& \quad {\zeta}_2\in \Gamma_\ell,
			\\[2mm]
			\Pi_{1, k}(\zeta,t) & \to
			& 0,
			& \quad \zeta_1\to -\infty,
		\end{array}\right.
	\end{equation}
	possessing the same properties as those established in Lemma~\ref{lemma-4-5} for $\Pi_{0,k}.$
	
	
	\section{Asymptotic Approximation and Asymptotic Estimates }\label{Sect-5}
	
	Using the solution $(\mathbf{u}^+_0, \mathbf{w}^f_0, \mathbf{u}^-_0)$ to the homogenized problem \eqref{homo-problem}, the boundary-layer solutions $Z^\pm_1,$ $Z^\pm_2$ (see \S \ref{Par-4-2}),  the solutions $\{N^{(k)}_1\}_{k=1}^{\mathcal{M}}$ of problems \eqref{problem-N-1},  and the solutions $\{\Pi_{0, k}\}_{k=1}^{\mathcal{M}}$ and $\{\Pi_{1, k}\}_{k=1}^{\mathcal{M}}$ of problems  \eqref{prim+probl+0} and \eqref{prim+probl+1}, respectively,  we construct the following approximation vector-function
	\begin{equation}\label{app-functions}
		\mathbf{R}_\varepsilon =
		\left\{\begin{array}{ll}
			\mathbf{R}_\varepsilon^\pm := \mathbf{u}^\pm_0(x,t) + \varepsilon \, \chi_0(x_2) \, \boldsymbol{\mathcal{N}}^\pm\big(\tfrac{x}{\varepsilon},x_1,t\big), & \quad (x,t) \in 
			\Omega^{\pm, T}_\varepsilon,
			\\[4pt]
			\mathbf{R}_\varepsilon^f := \mathbf{w}^f_0(x_1,t) + \chi_{1}\big(\frac{x_1 -\ell}{\varepsilon^\gamma}\big) \, \boldsymbol{\Pi}^f\big(\tfrac{x_1 - \ell}{\varepsilon}, \tfrac{x_2}{\varepsilon}, t\big) + \varepsilon \, \boldsymbol{\mathcal{N}}^f\big(\tfrac{x}{\varepsilon},x_1,t\big),& \quad (x,t) \in 
			\Omega^{f, T}_\varepsilon.
		\end{array}\right.
	\end{equation}
	Here $\chi_i \in C^\infty(\mathbb{R})$ ($i \in \{0,1\}$) satisfies $0 \leq \chi_i \leq 1$, with $\chi_i(s) = 1$ for $|s| \leq \theta_i/2$ and $\chi_i(s) = 0$ for $|s| \geq \theta_i$, where $\theta_0$ and $\theta_1$ are small fixed positive numbers; the parameter $\theta_1$ is chosen so that $\chi_1$ vanishes on the support of $\boldsymbol{\Phi}^\pm$ and $\boldsymbol{\Psi}^\pm$;  $\gamma$ is a fixed number from $(\tfrac{2}{3}, 1)$;
	\[
	\boldsymbol{\mathcal{N}}^\pm\!\left(\tfrac{x}{\varepsilon},x_1,t\right) :=
	Z^\pm_1\!\left(\tfrac{x_1}{\varepsilon}, \tfrac{x_2 \mp \varepsilon}{\varepsilon}\right)\, \partial_{x_1}\mathbf{u}^\pm_0(x_1,0,t)
	+ \left(Z^\pm_2\!\left(\tfrac{x_1}{\varepsilon}, \tfrac{x_2 \mp \varepsilon}{\varepsilon}\right) - \tfrac{x_2 \mp \varepsilon}{\varepsilon}\right)\, \partial_{x_2}\mathbf{u}^\pm_0(x_1,0,t);
	\]
	the $k$-th component of $\boldsymbol{\mathcal{N}}^f$ is
	$N^{(k)}_1\big(\tfrac{x}{\varepsilon}\big) \, \partial_{x_1}w_{0,k}(x_1,t),$ and $\big(\boldsymbol{\Pi}^f\big)_k = \Pi_{0, k} + \varepsilon\, \Pi_{1, k}.$

	Substituting $\mathbf{R}_\varepsilon$ into problem \eqref{probl} instead of the solution, we 
	calculate the resulting discrepancies. Obviously, $\mathbf{R}_\varepsilon\big|_{t=0}= \mathbf{0}$ in $\Omega_{\varepsilon},$ and, 
	due to Remark~\ref{rem-4-4}, $\mathbf{R}_\varepsilon^\pm  = \mathbf{0}$ on $\partial\Omega^{\pm, T}_\varepsilon \setminus S^{\pm, T}_\varepsilon.$

	\textit{Discrepancies in the differential equations in} $\Omega^{+, T}_\varepsilon.$ In $\Omega^{-, T}_\varepsilon$ the discrepancies are calculated in a similar way.  Direct computation yields the following expression:
	\begin{align}\label{res-1}
		\partial_t\mathbf{R}_\varepsilon^+ & - \mathbb{D}^+ \Delta \mathbf{R}_\varepsilon^+   - \boldsymbol{\mathfrak{F}}^\pm  =  \mathbf{F}^+(\mathbf{u}_0^+, x, t)  + \varepsilon\, \chi_0(x_2) \, \partial_t\boldsymbol{\mathcal{N}}^+(\xi,x_1,t) \notag
		\\
		&  -   \varepsilon \chi_0(x_2) \,  \partial_{x_1} \big(\big(\partial_{x_1}\boldsymbol{\mathcal{N}}^+(\xi,x_1,t)\bigr)\big|_{\xi_1=\frac{x_1}{\varepsilon}, \, \xi_2=\frac{x_2 - \varepsilon}{\varepsilon}}\big) 
		\ - \ \varepsilon \,\partial_{x_2}\bigl( \chi^\prime_0(x_2)\, \boldsymbol{\mathcal{N}}^+(\tfrac{x}{\varepsilon}, x_1, t) \bigr) \notag
		\\
		&
		- \chi^\prime_0(x_2)\, \partial_{\xi_2}\boldsymbol{\mathcal{N}}^+(\xi,x_1,t)\big|_{\xi_1=\frac{x_1}{\varepsilon}, \, \xi_2=\frac{x_2 - \varepsilon}{\varepsilon}}
		-   \chi_0(x_2) \, \partial^2_{x_1\xi_1}\boldsymbol{\mathcal{N}}^+(\xi,x_1,t)\big|_{\xi_1=\frac{x_1}{\varepsilon}, \, \xi_2=\frac{x_2 - \varepsilon}{\varepsilon}}.
	\end{align}
	
	\textit{Discrepancies on the oscillating boundaries} $S^{+, T}_\varepsilon.$ Observing that
	$$
	(\mathbb{D}^+\nabla\mathbf{R}_\varepsilon^+)\cdot\boldsymbol{\nu}_\varepsilon = 
	\big(D_1^+\,\nabla R^+_{\varepsilon, 1}\cdot\boldsymbol{\nu}_\varepsilon,\dots,D_{\mathcal{M}}^+\,\nabla R^+_{\varepsilon,\mathcal{M}}\cdot\boldsymbol{\nu}_\varepsilon\big)^\top,
	$$
	taking into account the Neumann conditions for $Z^\pm_1,$ $Z^\pm_2$ (see \S \ref{Par-4-2}) and recalling that $\boldsymbol{\nu}_\varepsilon$ is the outward unit normal to the boundary of $\Omega^f_\varepsilon,$ we find 
	\begin{multline}\label{res-2}
		(\mathbb{D}^+\nabla\mathbf{R}_\varepsilon^+)\cdot\boldsymbol{\nu}_\varepsilon  = (\mathbb{D}^+(\nabla\mathbf{u}_0^+(x,t) -\nabla\mathbf{u}_0^+(x_1,0,t))\cdot\boldsymbol{\nu}_\varepsilon 
		+  \widetilde{\boldsymbol{\Upsilon}}^+\big(\mathbf{u}^+_0(x_1,0,t), \mathbf{w}^f_0(x_1,t), x_1, t \big)
		\\ +
		\varepsilon\, \big(\big(\partial_{x_1}\boldsymbol{\mathcal{N}}^+(\xi,x_1,t)\bigr)\big|_{\xi_1=\frac{x_1}{\varepsilon}, \, \xi_2=\frac{x_2 - \varepsilon}{\varepsilon}}\big) \, \nu_1(\tfrac{x}{\varepsilon})
		\quad \text{on} \ \
		S^{+, T}_\varepsilon, 
	\end{multline}
	where the components of the vector-function $\widetilde{\boldsymbol{\Upsilon}}^+$ are defined by formula \eqref{eq-22}.

	\textit{Discrepancies in the differential equations in fracture domain} $\Omega^{f}_\varepsilon.$ We calculate discrepancies for each component of 
	$\mathbf{R}_\varepsilon^f,$ first for 
	$$
	\widetilde{\mathcal{U}}^{(k)}_\varepsilon(x,t) :=  w_{0,k}(x_1,t) + \varepsilon\, N^{(k)}_1\big(\tfrac{x}{\varepsilon}\big) \, \partial_{x_1}w_{0,k}(x_1,t),
	$$
	and then for 
	$$
	\widetilde{\mathcal{P}}^{(k)}_\varepsilon(x,t) := \chi_{1}\big(\tfrac{x_1 -\ell}{\varepsilon^\gamma}\big) \, \Big(\Pi_{0,k}\big(\tfrac{x_1 - \ell}{\varepsilon}, \tfrac{x_2}{\varepsilon}, t\big) + \varepsilon\, \Pi_{1,k}\big(\tfrac{x_1 - \ell}{\varepsilon}, \tfrac{x_2}{\varepsilon}, t\big)\Big).
	$$ 
	Using \eqref{eq1}, \eqref{hyperbolic-system}, \eqref{problem-N-1} and \eqref{problem-N-2}, we obtain 
	\begin{multline}\label{eq-5-4}
		\partial_t\widetilde{\mathcal{U}}^{(k)}_\varepsilon -  \varepsilon\, \nabla_x \cdot\big( \mathbb{D}(\tfrac{x}{\varepsilon}) \nabla_x \widetilde{\mathcal{U}}^{(k)}_\varepsilon \big) +
		\overrightarrow{V_\varepsilon}(x) \cdot \nabla_x \widetilde{\mathcal{U}}^{(k)}_\varepsilon = \widehat{F}_k(\mathbf{u}^+_0, \mathbf{u}^-_0, \mathbf{w}^f_0, x_1, t) 
		+ \varepsilon\, N^{(k)}_1\, \partial^2_{t x_1}w_{0,k}
		\\
		- \varepsilon \Big(\widehat{d}^{\,k}_{11} + \overrightarrow{V} \cdot \nabla_{\xi}N^{(k)}_{2}\Big)\Big|_{\xi=\frac{x}{\varepsilon}}\,  \partial^2_{x^2_1}w_{0,k} +  
		\varepsilon^2\Big(d_{11}(\xi)\, N^{(k)}_{1}(\xi) + \sum_{j=1}^{2}
		d_{1j}(\xi)\, \partial_{\xi_j}N_2^{(k)}(\xi)\Big)\Big|_{\xi=\frac{x}{\varepsilon}} \partial^3_{x^3_1}w_{0,k}
		\\
		+ \varepsilon^2 \sum_{i=1}^{2} \partial_{x_i}\Big(\sum_{j=1}^{2}
		d_{ij}(\xi)\, \partial_{\xi_j}N_2^{(k)}(\xi)\big|_{\xi=\frac{x}{\varepsilon}} \, \partial^2_{x^2_1}w_{0,k} \Big) .
	\end{multline}
	
	Taking into account that $\Pi_{0,k}$ is a solution to problem \eqref{prim+probl+0}, we find 
	\begin{multline*}
		\partial_t \widetilde{\mathcal{P}}^{(k)}_\varepsilon
		-  \varepsilon\, \nabla_x \cdot\big( \mathbb{D}(\tfrac{x}{\varepsilon}) \nabla_x \widetilde{\mathcal{P}}^{(k)}_\varepsilon \big) +
		\overrightarrow{V_\varepsilon}(x) \cdot \nabla_x \widetilde{\mathcal{P}}^{(k)}_\varepsilon  =  \varepsilon\, \chi_{1}\big(\tfrac{x_1 -\ell}{\varepsilon^\gamma}\big)\, \partial_t\Pi_{1,k}\big(\tfrac{x_1- \ell}{\varepsilon}, \tfrac{x_2}{\varepsilon}, t\big) 
		\\
		- \varepsilon^{1-2\gamma}\,  \chi^{\prime\prime}_{1}\big(\tfrac{x_1 -\ell}{\varepsilon^\gamma}\big) \, d_{11} \, \Pi_{0,k}  
		- \varepsilon^{-\gamma} \bigg(\sum_{i=1}^{2} \big(d_{1 i}\, \partial_{\xi_i}\Pi_{0,k} +
		\partial_{\xi_i}(d_{1 i} \, \Pi_{0,k})\big) - v_{1}(\xi)\, \Pi_{0,k}\bigg)\Big|_{\xi_1=\frac{x_1 - \ell}{\varepsilon}, \ \xi_2=\frac{x_2}{\varepsilon}}\, \chi^{\prime}_{1}\big(\tfrac{x_1 -\ell}{\varepsilon^\gamma}\big)
		\\
		- \varepsilon^{2-2\gamma}\,  \chi^{\prime\prime}_{1}\big(\tfrac{x_1 -\ell}{\varepsilon^\gamma}\big) \, d_{11} \, \Pi_{1,k}  
		- \varepsilon^{1-\gamma} \bigg(\sum_{i=1}^{2} \big(d_{1 i}\, \partial_{\xi_i}\Pi_{1,k} +
		\partial_{\xi_i}(d_{1 i} \, \Pi_{1,k})\big) - v_{1}(\xi)\, \Pi_{1,k}\bigg)\Big|_{\xi_1=\frac{x_1 - \ell}{\varepsilon}, \ \xi_2=\frac{x_2}{\varepsilon}}\, \chi^{\prime}_{1}\big(\tfrac{x_1 -\ell}{\varepsilon^\gamma}\big).
	\end{multline*}

	\textit{Discrepancies on the oscillating and perforated boundaries.} Considering the boundary conditions for $N^{(k)}_1$ and $N^{(k)}_2$(see \eqref{problem-N-1} and \eqref{problem-N-2}), we get
	\begin{equation}\label{eq-5-6}
		-  \mathbb{D}_k(\tfrac{x}{\varepsilon})\nabla_x \widetilde{\mathcal{U}}^{(k)}_\varepsilon  \cdot \boldsymbol{\nu}_\varepsilon  =  \varepsilon \, \mathcal{B}^k_\xi(N^{(k)}_2)\Big|_{\xi=\frac{x}{\varepsilon}} \partial^2_{x_1^2}w_{0,k}  \quad \text{on} \ \ S^{\pm, T}_\varepsilon \cup
		G^{T}_\varepsilon .
	\end{equation}
	
	Now we note that 
	\begin{equation*}
		\widetilde{\mathcal{U}}^{(k)}_\varepsilon\Big|_{\Gamma^T_{0,\varepsilon}} = \varepsilon\, N^{(k)}_1\big(0,\tfrac{x_2}{\varepsilon}\big) \, \partial_{x_1}w_{0,k}(0,t) \quad \text{and} \quad  \Big(\widetilde{\mathcal{U}}^{(k)}_\varepsilon + \widetilde{\mathcal{P}}^{(k)}_\varepsilon \Big)\Big|_{\Gamma^T_{\ell,\varepsilon}} = q^\ell_k(t) + \varepsilon\, N^{(k)}_1\big(0,\tfrac{x_2}{\varepsilon}\big) \, \partial_{x_1}w_{0,k}(\ell,t).
	\end{equation*}
	To neutralize the residuals of order $\mathcal{O}(\varepsilon),$ we introduce the following  cutoff function $\chi_\varepsilon^{(0)} \in C^\infty([0,+\infty))$ defined by
	$$
	0\le \chi_\varepsilon^{(0)}(x_1) \le 1, \quad \chi_\varepsilon^{(0)}(x_1) = 1 \ \ \text{for} \ \ x_1\in [0, \varepsilon  \tfrac{\theta_2}{2}], \quad \text{and}  \quad \chi_\varepsilon^{(0)}(x_1) = 0 \ \ \text{for} \ \ x_1 \ge \varepsilon \, \theta_2 .
	$$
	Clearly, $|(\chi_\varepsilon^{(0)})^\prime| \le C_0 \varepsilon^{-1}.$ We also define a new cutoff function $\chi_\varepsilon^{(\ell)}(x_1) := \chi_\varepsilon^{(0)}(\ell - x_1).$ The constant $\theta_2$ is chosen such that 
	$$
	Y_0 \cap \big((0, \theta_2) \times (-1, 1)\big) = (0, \theta_2) \times (-1, 1)\quad \text{ and} \quad Y_0 \cap \big((1- \theta_2, 1) \times (-1, 1)\big) = (1- \theta_2, 1) \times (-1, 1),
	$$ 
	which is possible due to the assumptions on the structure of $Y_0$ (see Sect.~\ref{Sec-2}). 
	Then for each $k\in \{1,\ldots,\mathcal{M}\},$ the function
	\begin{equation}\label{function-L}
		\lambda_\varepsilon^{(k)}(x,t) := \varepsilon \,\big(\chi_\varepsilon^{(0)}(x_1) + \chi_\varepsilon^{(\ell)}(x_1)\big)\, N^{(k)}_1\big(\tfrac{x}{\varepsilon}\big) \, \partial_{x_1}w_{0,k}(x_1,t), \quad (x,t) \in \Omega_\varepsilon^{f, T},
	\end{equation}
	belongs to $L^2\big(0,T; H^1(\Omega_\varepsilon^f)\big).$ Thanks to the boundedness of $N^{(k)}_1(\xi),$  $\nabla_{\xi}N^{(k)}_1(\xi),$
	and $\partial^2_{x^2_1}w_{0,k},$ it is easy to show that
	\begin{equation}\label{function-L-bound}
		\|\lambda_\varepsilon^{(k)}\|_{L^2(\Omega_\varepsilon^{f, T}))} \le C_1 \, \varepsilon^2 \quad \text{and} \quad 
		\|\lambda_\varepsilon^{(k)}\|_{L^2(0,T; H^1(\Omega_\varepsilon^f))} \le C_2 \, \varepsilon .
	\end{equation}
	
	Using these functions, we adjust the approximation in $\Omega^{f,T}_\varepsilon,$ namely $\widetilde{\mathbf{R}}_\varepsilon^f :=  \mathbf{R}_\varepsilon^f - \mathbf{\Lambda}^f_\varepsilon,$ where the $k$-th component of $\mathbf{\Lambda}^f_\varepsilon$ is  
	$\lambda_\varepsilon^{(k)}.$ Thus, 
	\begin{equation}
		\widetilde{\mathbf{R}}_\varepsilon^f\Big|_{\Gamma^T_{0,\varepsilon}} = 0 \quad \text{and} \quad                                 \widetilde{\mathbf{R}}_\varepsilon^f\Big|_{\Gamma^T_{\ell,\varepsilon}} = \mathbf{q}^\ell(t).	
	\end{equation}
	
	\begin{remark}
		Hereinafter, constants in all inequalities are positive and independent of the parameter $\varepsilon.$ In general, constants with the same indices in different inequalities are different.
	\end{remark}
	
	\begin{theorem}\label{main theorem}
		Assume that all hypotheses stated in Section~\ref{Sec-2} are satisfied. Then there exist positive constants 
		$\tilde{C}_1$, $\tilde{C}_2$, and $\varepsilon_0$ such that, for all 
		$\varepsilon \in (0,\varepsilon_0)$, the following estimates hold: 
		\begin{equation}\label{main-est-1}
			\max_{t\in [0,T]} \|\mathbf{u}^\pm_\varepsilon(\cdot,t) - \mathbf{R}^\pm_\varepsilon(\cdot,t)\|_{L^2(\Omega_\varepsilon^{\pm})} \, + \, 
			\||\nabla \mathbf{u}^\pm_\varepsilon - \nabla\mathbf{R}^\pm_\varepsilon|\|_{L^2(\Omega_\varepsilon^{\pm,T})} \le \tilde{C}_1 \, \varepsilon^{1/2},
		\end{equation}	
		\begin{equation}\label{main-est-2}
			\max_{t\in [0,T]} \|\mathbf{u}^f_\varepsilon(\cdot,t) - \mathbf{R}^f_\varepsilon(\cdot,t)\|_{L^2(\Omega_\varepsilon^{f})} \, + \, 
			\sqrt{\varepsilon} \, \||\nabla \mathbf{u}^f_\varepsilon - \nabla\mathbf{R}^f_\varepsilon|\|_{L^2(\Omega_\varepsilon^{f,T})} \le \tilde{C}_2 \, \varepsilon,
		\end{equation}
		where $\mathbf{u}_\varepsilon = \big(\mathbf{u}^+_\varepsilon, \mathbf{u}^+_\varepsilon, \mathbf{u}^f_\varepsilon \big)$ is the weak solution to problem \eqref{probl}, and  $\mathbf{R}^\pm_\varepsilon$,  $\mathbf{R}^f_\varepsilon$ are the approximation functions defined in 
		\eqref{app-functions}.
	\end{theorem}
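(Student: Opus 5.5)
We will work with the error functions $\mathbf{E}^\pm_\varepsilon := \mathbf{u}^\pm_\varepsilon - \mathbf{R}^\pm_\varepsilon$ and $\mathbf{E}^f_\varepsilon := \mathbf{u}^f_\varepsilon - \widetilde{\mathbf{R}}^f_\varepsilon$. By construction, $\mathbf{E}_\varepsilon|_{t=0}=\mathbf{0}$, $\mathbf{E}^\pm_\varepsilon$ vanishes on $\partial\Omega^\pm_\varepsilon\setminus S^\pm_\varepsilon$, and $\mathbf{E}^f_\varepsilon$ vanishes on $\Gamma_{0,\varepsilon}\cup\Gamma_{\ell,\varepsilon}$, so each error is an admissible test function in \eqref{identity-1} and \eqref{identity-f}. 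Subtracting from these identities the relations satisfied by $\mathbf{R}^\pm_\varepsilon$ and $\widetilde{\mathbf{R}}^f_\varepsilon$ gives a coupled integral identity for $\mathbf{E}_\varepsilon$. Its right-hand side consists of three groups: the differences of the nonlinear terms, the residuals \eqref{res-1}, \eqref{res-2}, \eqref{eq-5-4}, \eqref{eq-5-6} together with the boundary-layer residual, and the contributions of $\mathbf{\Lambda}^f_\varepsilon$. At the end, the estimate for $\mathbf{R}^f_\varepsilon$ itself follows from the one for $\widetilde{\mathbf{R}}^f_\varepsilon$ by \eqref{function-L-bound}.

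Testing with $\mathbf{E}_\varepsilon$ gives energy identities. In the bulk we obtain $\frac12\frac{d}{dt}\|\mathbf{E}^\pm_\varepsilon\|^2 + c\|\nabla\mathbf{E}^\pm_\varepsilon\|^2$. In the fracture we obtain $\frac12\frac{d}{dt}\|\mathbf{E}^f_\varepsilon\|^2 + c\,\varepsilon\|\nabla\mathbf{E}^f_\varepsilon\|^2$, and the convective term drops out: since $\nabla\cdot\overrightarrow{V}=0$, $\overrightarrow{V}\cdot\boldsymbol{\nu}=0$ on $S^\pm\cup\partial T_0$ (see \eqref{potential0}), and $\mathbf{E}^f_\varepsilon=0$ on $\Gamma_\varepsilon$, we have $\int \nabla\cdot(\overrightarrow{V_\varepsilon}E)E\,dx=0$.

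The nonlinear differences are handled by \eqref{Lip-ineq}. The bulk terms are bounded by $C\|\mathbf{E}^\pm\|^2$. The interface terms $\int_{S^\pm_\varepsilon}(\Upsilon(\mathbf{u}_\varepsilon)-\Upsilon(\mathbf{R}_\varepsilon))\,\mathbf{E}^\pm$ and $\varepsilon\int_{S^\pm_\varepsilon\cup G_\varepsilon}(\ldots)\,\mathbf{E}^f$ are estimated with two trace inequalities. In the bulk, $\|v\|^2_{L^2(S^\pm_\varepsilon)}\le C(\delta\|\nabla v\|^2+\delta^{-1}\|v\|^2)$. In the $\varepsilon$-periodic thin domain the scaled trace inequality reads $\varepsilon\|v\|^2_{L^2(S^\pm_\varepsilon\cup G_\varepsilon)}\le C(\|v\|^2_{L^2(\Omega^f_\varepsilon)}+\varepsilon^2\|\nabla v\|^2_{L^2(\Omega^f_\varepsilon)})$, obtained cellwise by rescaling to $Y_0$. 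The mixed term $\int_{S^+_\varepsilon}|\mathbf{E}^f|\,|\mathbf{E}^+|$ is the delicate coupling. I would bound it by $\varepsilon^{-1}\|\mathbf{E}^f\|^2_{\text{trace}}$ plus $\varepsilon\|\mathbf{E}^+\|^2_{L^2(S^+_\varepsilon)}$ and absorb it using the two inequalities above. This is where the different scalings $\varepsilon^{1/2}$ in the bulk versus $\varepsilon$ in the fracture enter, and I would weight the bulk and fracture energies accordingly before summing.

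Next come the residual estimates, term by term. In the bulk, \eqref{res-1} has $\mathcal{O}(1)$ terms supported in the boundary layer $\{x_2\lesssim\varepsilon\}$ (the terms with $\partial_{\xi}\boldsymbol{\mathcal{N}}^+$, which decay exponentially by \eqref{eq-18} and \eqref{eq-20}) and $\mathcal{O}(\varepsilon)$ terms elsewhere. The $\chi'_0$ terms are exponentially small. A layer residual $f$ of size $\mathcal{O}(1)$ on a strip of width $\varepsilon$ gives $|\int f E|\le C\varepsilon^{1/2}\|E\|_{H^1}$, hence a contribution of order $\varepsilon^{1/2}$ to the energy norm. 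On $S^+_\varepsilon$, \eqref{res-2} combined with the averaged interface condition gives $\int_{S^+_\varepsilon}(\Upsilon(\ldots,\tfrac{x}{\varepsilon})-\widetilde\Upsilon)\,\mathbf{E}^+$, which is controlled by the oscillation inequality \eqref{eq-15} with bound $c\,\varepsilon^{1/2}\|\mathbf{E}^+\|_{H^1}$. The term $\nabla\mathbf{u}_0^+(x)-\nabla\mathbf{u}_0^+(x_1,0)$ is $\mathcal{O}(\varepsilon)$ by Lemma~\ref{Th-4-4}.

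In the fracture, \eqref{eq-5-4} and \eqref{eq-5-6} are $\mathcal{O}(\varepsilon)$ pointwise. This uses the $C^3$ regularity of $\mathbf{w}^f_0$ (Lemma~\ref{Th-4-4}), the smoothness of $N_1,N_2$, and the divergence-form structure, which after integration by parts is paired with $\varepsilon^2\nabla E$. There is also the Taylor mismatch between $\Phi(\mathbf{u}_0(x_1,0),\ldots)$ and $\Phi(\mathbf{R}^\pm,\mathbf{R}^f)$, of order $\varepsilon$ on $S^\pm_\varepsilon$. Since $|\Omega^f_\varepsilon|\sim\varepsilon$, an $\mathcal{O}(\varepsilon)$ residual integrates to $C\varepsilon^{3/2}\|E^f\|_{L^2}$, which is more than enough.

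For the boundary-layer part $\widetilde{\mathcal{P}}$, the cutoff terms with $\chi'_1,\chi''_1$ live where $|\zeta_1|\sim\varepsilon^{\gamma-1}$. By Lemma~\ref{lemma-4-5} they are of size $\varepsilon^{-2\gamma}\exp(-\delta_0\varepsilon^{\gamma-1})$, i.e. superpolynomially small. The $\Lambda$ terms are controlled by \eqref{function-L-bound}; their gradient pairing $\varepsilon\int\mathbb{D}\nabla\Lambda\cdot\nabla E$ is $\le C\varepsilon\cdot\varepsilon^{1/2}\|\nabla\Lambda\|\cdot\varepsilon^{1/2}\|\nabla E\|$, which is absorbed. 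Collecting everything, the weighted sum $\mathcal{E}(t)$ of the bulk and fracture energies satisfies $\mathcal{E}'+\text{dissipation}\le C\mathcal{E}+C\varepsilon(\ldots)$, and Gronwall gives \eqref{main-est-1} and \eqref{main-est-2}.

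The main obstacle I expect is the coupling term on $S^\pm_\varepsilon$, where the bulk needs only an $\varepsilon^{1/2}$ bound while the fracture requires order $\varepsilon$. To keep the fracture at order $\varepsilon$, I would first check that the interface residual entering the fracture equation carries the factor $\varepsilon^\alpha=\varepsilon$ and is paired with the trace of $\mathbf{E}^f$. The contribution of $\mathbf{E}^+$ inside $\Phi^\pm$ then costs only $\varepsilon\|\mathbf{E}^+\|_{L^2(S^+_\varepsilon)}\|\mathbf{E}^f\|_{L^2(S^+_\varepsilon)}\lesssim \varepsilon^{1/2}\|\mathbf{E}^+\|_{H^1}\,(\|\mathbf{E}^f\|+\varepsilon\|\nabla\mathbf{E}^f\|)$. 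Since $\varepsilon^{1/2}\|\mathbf{E}^+\|_{H^1}=\mathcal{O}(\varepsilon)$, a fracture bound of order $\varepsilon$ becomes consistent. If a fully coupled Gronwall argument does not close, the fallback is to estimate the bulk first using $\|\mathbf{E}^f\|$ as data, and then close the fracture estimate.
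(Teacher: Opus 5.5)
Your treatment of the fracture residuals has a genuine gap. The claim that \eqref{eq-5-4} and \eqref{eq-5-6} are $\mathcal{O}(\varepsilon)$ pointwise is false. The first term on the right of \eqref{eq-5-4} is $\widehat{F}_k(\mathbf{u}^+_0,\mathbf{u}^-_0,\mathbf{w}^f_0,x_1,t)$, which is of order one, while the fracture equation in \eqref{probl} has zero right-hand side. Moreover, the corrector $N^{(k)}_1$ from \eqref{problem-N-1} carries no reaction data on $S^\pm\cup\partial T_0$ (this is the adjustment of Remark~\ref{rem-4-1}). Hence the flux $-\varepsilon\,\mathbb{D}_k\nabla_x\widetilde{\mathcal{U}}^{(k)}_\varepsilon\cdot\boldsymbol{\nu}_\varepsilon$ on $S^\pm_\varepsilon\cup G_\varepsilon$ is $\mathcal{O}(\varepsilon^2)$ by \eqref{eq-5-6}, whereas the required flux is $\varepsilon\Phi^\pm_k$, resp.\ $\varepsilon\Psi_k$. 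So the boundary residual is the full term $\varepsilon\boldsymbol{\Phi}^\pm(\mathbf{u}^\pm_0,\mathbf{w}^f_0,\tfrac{x}{\varepsilon},x_1,t)$, resp.\ $\varepsilon\boldsymbol{\Psi}(\mathbf{w}^f_0,\tfrac{x}{\varepsilon},x_1,t)$; the Taylor mismatch you list only comes in addition. The factor $\varepsilon^\alpha=\varepsilon$ does not make it small. Since $|\Omega^f_\varepsilon|\sim\varepsilon$ while $S^\pm_\varepsilon$ and $G_\varepsilon$ have length of order one, the volume and boundary terms are of the same size. Estimated separately, $\bigl|\int_{\Omega^f_\varepsilon}\widehat{\mathbf{F}}\cdot\mathbf{E}^f\,dx\bigr|\le C\varepsilon^{1/2}\|\mathbf{E}^f\|$, and by the scaled trace inequality $\varepsilon\bigl|\int_{S^\pm_\varepsilon}\boldsymbol{\Phi}^\pm\cdot\mathbf{E}^f\,dl_x\bigr|\le C\varepsilon^{1/2}\bigl(\|\mathbf{E}^f\|+\varepsilon\|\nabla\mathbf{E}^f\|\bigr)$. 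After Young and Gronwall these give only $\|\mathbf{E}^f\|_{L^2(\Omega^f_\varepsilon)}\lesssim\varepsilon^{1/2}$, i.e.\ an $\mathcal{O}(1)$ error for the layer average. Through $\int_{S^\pm_\varepsilon}|\mathbf{E}^f|^2\,dl_x\lesssim\varepsilon^{-1}\|\mathbf{E}^f\|^2+\varepsilon\|\nabla\mathbf{E}^f\|^2$ this also feeds an $\mathcal{O}(1)$ source into the bulk inequality. So neither \eqref{main-est-1} nor \eqref{main-est-2} follows, whatever weighting of the energies you choose.

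The missing idea is the compensation built into $\widehat{\mathbf{F}}$. By \eqref{hat F}, \eqref{eq10}, \eqref{eq11} (the solvability condition of Proposition~\ref{Prop-4-1}), for all $(x_1,t)$
\[
\int_{Y_0}\widehat{F}_k\,d\xi+\int_{S^{\pm}}\Phi^\pm_k(\mathbf{u}^\pm_0,\mathbf{w}^f_0,\xi,x_1,t)\,dl_\xi+\int_{\partial T_0}\Psi_k(\mathbf{w}^f_0,\xi,x_1,t)\,dl_\xi=0 .
\]
The three terms must therefore be kept together, as in the absolute value on the right of \eqref{eq-5-14}. They are then estimated by a thin-domain analogue of \eqref{eq-15}; the paper invokes Lemma~2.1 of Melnyk--Popov, which can be proved as follows.
\begin{itemize}
\item Take the periodic Neumann cell solution $\eta(\xi;x_1,t)$ of $\Delta_\xi\eta=\widehat{F}_k$ in $Y_0$, with $\partial_{\boldsymbol{\nu}_\xi}\eta=-\Phi^\pm_k$ on $S^\pm$ and $\partial_{\boldsymbol{\nu}_\xi}\eta=-\Psi_k$ on $\partial T_0$. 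It exists precisely because of the identity above.
\item Write $\widehat{F}_k=\varepsilon\nabla_x\cdot\bigl(\nabla_\xi\eta|_{\xi=x/\varepsilon}\bigr)-\varepsilon\,\partial^2_{x_1\xi_1}\eta|_{\xi=x/\varepsilon}$ and integrate by parts.
\item The boundary integrals on $S^\pm_\varepsilon\cup G_\varepsilon$ cancel exactly, and the test function vanishes on $\Gamma_{0,\varepsilon}\cup\Gamma_{\ell,\varepsilon}$.
\item The sum is then bounded by $C\varepsilon^{3/2}\bigl(\|\nabla\mathbf{E}^f\|+\|\mathbf{E}^f\|\bigr)\le\delta\varepsilon\|\nabla\mathbf{E}^f\|^2+\|\mathbf{E}^f\|^2+C\varepsilon^2$.
\end{itemize}
This $\varepsilon^2$ source is what produces the order $\varepsilon$ in \eqref{main-est-2}.

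With this step inserted, the rest of your plan is sound. Your bulk residual bounds and the use of \eqref{eq-15} match the paper. Your weighted energy $\|\mathbf{E}^\pm\|^2+K\varepsilon^{-1}\|\mathbf{E}^f\|^2$ is a legitimate alternative to the paper's sequential argument. The paper instead first applies Gronwall in the fracture with the bulk traces as data, then substitutes the resulting trace bound into the bulk inequality.
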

	
	\begin{proof}
		\textbf{1.} \emph{Estimates in $\Omega_\varepsilon^{\pm, T}$.}
		We present the proof only in $\Omega_\varepsilon^{+, T}$; the argument in $\Omega_\varepsilon^{-,T}$ is identical.  Subtracting from \eqref{res-1} the corresponding differential equations of problem \eqref{probl},  multiplying the resulting identity by $\mathbf{W}^+_\varepsilon := \mathbf{R}^+_\varepsilon - \mathbf{u}^+_\varepsilon,$ and 
		integrating by parts, we get
		\begin{multline}\label{est-5-10}
			\int_{\Omega_\varepsilon^+} \big|\mathbf{W}^+_\varepsilon(x,\tau)\big|^2 \, dx\  + \int_{\Omega_\varepsilon^{+, \tau}} \big|\nabla \mathbf{W}^+_\varepsilon\big|^2 \, dxdt \le C_1\Bigg(\int_{\Omega_\varepsilon^{+, \tau}} |\mathbf{F}^+(\mathbf{u}_0^+, x, t) - \mathbf{F}^+(\mathbf{u}_\varepsilon^+, x, t)| \,
			|\mathbf{W}^+_\varepsilon|\, dxdt 
			\\
			+ \varepsilon \int_{\Omega_\varepsilon^{+, \tau}} |\chi^\prime_0(x_2)|\, |\boldsymbol{\mathcal{N}}^+(\tfrac{x}{\varepsilon}, x_1, t)|\,
			|\partial_{x_2}\mathbf{W}^+_\varepsilon| \, dxdt
			+ \varepsilon \int_{\Omega_\varepsilon^{+, \tau}} \chi_0(x_2) |\partial_t\boldsymbol{\mathcal{N}}^+(\tfrac{x}{\varepsilon},x_1,t)|
			\, |\mathbf{W}^+_\varepsilon|\, dxdt 
			\\
			+
			\int_{\Omega_\varepsilon^{+, \tau}} |\chi^\prime_0(x_2)| \, \big| \partial_{\xi_2}\boldsymbol{\mathcal{N}}^+(\xi,x_1,t)\big|_{\xi_1=\frac{x_1}{\varepsilon}, \, \xi_2=\frac{x_2 - \varepsilon}{\varepsilon}}\big|
			\, |\mathbf{W}^+_\varepsilon|\, dxdt
			\\
			+ \int_{\Omega_\varepsilon^{+, \tau}} \chi_0(x_2) \, \big|\partial^2_{x_1\xi_1}\boldsymbol{\mathcal{N}}^+(\xi,x_1,t)\big|_{\xi_1=\frac{x_1}{\varepsilon}, \, \xi_2=\frac{x_2 - \varepsilon}{\varepsilon}}\big| \, |\mathbf{W}^+_\varepsilon|\, dxdt
			\\
			+	\int_{S_\varepsilon^{+, \tau}}|\nabla\mathbf{u}_0^+(x,t) -\nabla\mathbf{u}_0^+(x_1,0,t)| \, |\mathbf{W}^+_\varepsilon| \, dl_xdt 
			\\	
			+ \bigg|\int_{S_\varepsilon^{+, \tau}} \big(\widetilde{\boldsymbol{\Upsilon}}^+(\mathbf{u}^+_0(x_1,0,t), \mathbf{w}^f_0(x_1,t), x_1, t) -  \boldsymbol{\Upsilon}^+(\mathbf{u}_\varepsilon^+, \mathbf{u}_\varepsilon^f, \tfrac{x}{\varepsilon}, x_1, t)\big) \cdot \mathbf{W}^+_\varepsilon \, dl_xdt\bigg| \Bigg).
		\end{multline}
		Using Cauchy’s inequality with any $\delta > 0$ $(ab \le \delta a^2 + b^2/4\delta),$ assumptions $\mathbf{A3},$ the properties of the solutions
		$Z^\pm_1,$ $Z^\pm_2$ (see \S \ref{Par-4-2}), and the solution to the homogenized problem (see Lemma~\ref{Th-4-4}),  we deduce from \eqref{est-5-10} the inequality    
		\begin{equation}\label{main-omega+}
			\int\limits_{\Omega_\varepsilon^+} \big|\mathbf{W}^+_\varepsilon(x,\tau)\big|^2  dx\,  + \int\limits_{\Omega_\varepsilon^{+, \tau}} \big|\nabla \mathbf{W}^+_\varepsilon\big|^2  dxdt \, \le \, C_1\int\limits_{\Omega_\varepsilon^{+, \tau}} \big|\mathbf{W}^+_\varepsilon\big|^2  dxdt \,
			+ C_2 \int\limits_{S_\varepsilon^{+, \tau}} \big|\mathbf{R}^f_\varepsilon - \mathbf{u}^f_\varepsilon\big|^2  dl_xdt \, + C_3 \, \varepsilon. 
		\end{equation}
		
		We now illustrate how to estimate several integrals in \eqref{est-5-10}. Since  $\partial_{\xi_2}\boldsymbol{\mathcal{N}}^+(\xi,x_1,t)$ and $\partial^2_{x_1\xi_1}\boldsymbol{\mathcal{N}}^+(\xi,x_1,t)$  vanish exponentially as $\xi_2 \to +\infty,$ we have  
		$$
		\int\limits_{\Omega_\varepsilon^{+, \tau}} |\chi^\prime_0(x_2)| \, \big| \partial_{\xi_2}\boldsymbol{\mathcal{N}}^+(\xi,x_1,t)\big|_{\xi_1=\frac{x_1}{\varepsilon}, \, \xi_2=\frac{x_2 - \varepsilon}{\varepsilon}}\big|
		\, |\mathbf{R}^+_\varepsilon - \mathbf{u}^+_\varepsilon|\, dxdt \le 
		\int\limits_{\Omega_\varepsilon^{+, \tau}} |\mathbf{R}^+_\varepsilon - \mathbf{u}^+_\varepsilon|^2\, dxdt + \mathcal{O}\big(\exp(- \tfrac{\delta_0\, \theta_0}{\varepsilon})\big). 
		$$
		By Lemma~3.1 of \cite{Mel-Naz-1997}, for any $\rho\in(0,1)$ and $\delta>0$, 
		\begin{multline*}
			\int_{\Omega_\varepsilon^{+, \tau}} \chi_0(x_2) \, \big|\partial^2_{x_1\xi_1}\boldsymbol{\mathcal{N}}^+(\xi,x_1,t)\big|_{\xi_1=\frac{x_1}{\varepsilon}, \, \xi_2=\frac{x_2 - \varepsilon}{\varepsilon}}\big| \, |\mathbf{W}^+_\varepsilon|\, dxdt \, \le \, C(\rho) \varepsilon^{1-\rho}\|\nabla \mathbf{W}^+_\varepsilon\|_{L^2(\Omega_\varepsilon^{+, \tau})}
			\\
			\le \delta \, \int_{\Omega_\varepsilon^{+, \tau}} \big|\nabla \mathbf{W}^+_\varepsilon\big|^2 \, dxdt + C(\rho,\delta)\, \varepsilon^{2(1-\rho)}.
		\end{multline*}
		Choosing $\delta$ sufficiently small, the gradient term is absorbed into the left-hand side of \eqref{est-5-10}.
		
		The last integral in \eqref{est-5-10} is bounded above by the sum
		$$
		\bigg|\int_{S_\varepsilon^{+, \tau}} \big(\widetilde{\boldsymbol{\Upsilon}}^+(\mathbf{u}^+_0(x_1,0,t), \mathbf{w}^f_0(x_1,t), x_1, t) -  \boldsymbol{\Upsilon}^+(\mathbf{u}^+_0(x_1,0,t), \mathbf{w}^f_0(x_1,t), \tfrac{x}{\varepsilon}, x_1, t)\big) \cdot \mathbf{W}^+_\varepsilon \, dl_xdt\bigg|
		$$
		$$ 
		+ \int_{S_\varepsilon^{+, \tau}} \big|\boldsymbol{\Upsilon}^+(\mathbf{u}^+_0(x_1,0,t), \mathbf{w}^f_0(x_1,t), \tfrac{x}{\varepsilon}, x_1, t) - \boldsymbol{\Upsilon}^+(\mathbf{R}^+_\varepsilon, \mathbf{R}^f_\varepsilon, \tfrac{x}{\varepsilon}, x_1, t)\big| \, \big|\mathbf{W}^+_\varepsilon\big| \, dl_xdt
		$$
		$$
		+ \int_{S_\varepsilon^{+, \tau}} \big|\boldsymbol{\Upsilon}^+(\mathbf{R}^+_\varepsilon, \mathbf{R}^f_\varepsilon, \tfrac{x}{\varepsilon}, x_1, t) - \boldsymbol{\Upsilon}^+(\mathbf{u}_\varepsilon^+, \mathbf{u}_\varepsilon^f, \tfrac{x}{\varepsilon}, x_1, t)\big| \, \big|\mathbf{W}^+_\varepsilon\big| \, dl_xdt =: I_1(\varepsilon) + I_2(\varepsilon) + I_3(\varepsilon) .
		$$
		Taking into account that
		$$
		\int_{S^{+}} \big(\widetilde{\boldsymbol{\Upsilon}}^+(\mathbf{u}^+_0(x_1,0,t), \mathbf{w}^f_0(x_1,t), x_1, t) -  \boldsymbol{\Upsilon}^+(\mathbf{u}^+_0(x_1,0,t), \mathbf{w}^f_0(x_1,t), \tfrac{x}{\varepsilon}, x_1, t)\big)  \, dl_\xi = \mathbf{0}
		$$
		and following the proof of inequality \eqref{eq-15}, we derive
		$$
		I_1(\varepsilon) \le C_1 \varepsilon^\frac{1}{2} \|\nabla\mathbf{W}^+_\varepsilon\|_{L^2(\Omega_\varepsilon^{+, \tau})} \,\le \, \delta \, \int_{\Omega_\varepsilon^{+, \tau}} \big|\nabla\mathbf{W}^+_\varepsilon\big|^2 \, dxdt + C(\delta)\, \varepsilon .
		$$
		Thanks to \eqref{Lip-ineq}, we have that $I_2(\varepsilon) \le C_2 \varepsilon \int_{S_\varepsilon^{+, \tau}} |\mathbf{W}^+_\varepsilon| \, dl_xdt \le C_3 \varepsilon^2 + \int_{S_\varepsilon^{+, \tau}} |\mathbf{W}^+_\varepsilon|^2 \, dl_xdt.$ Then applying Cauchy's inequality for the trace operator yields 
		$$
		I_2(\varepsilon) \le 
		C_3 \varepsilon^2 + \delta \, \int_{\Omega_\varepsilon^{+, \tau}} \big|\nabla\mathbf{W}^+_\varepsilon\big|^2 \, dxdt + C(\delta)\, 
		\int_{\Omega_\varepsilon^{+, \tau}} |\mathbf{W}^+_\varepsilon|^2 \, dxdt.
		$$ 
		Similarly,
		$$
		I_3(\varepsilon) \le 
		\delta \, \int_{\Omega_\varepsilon^{+, \tau}} \big|\nabla\mathbf{W}^+_\varepsilon\big|^2 \, dxdt + C(\delta)\, 
		\int_{\Omega_\varepsilon^{+, \tau}} |\mathbf{W}^+_\varepsilon|^2 \, dxdt + C_2 \int_{S_\varepsilon^{+, \tau}} |\mathbf{R}^f_\varepsilon - \mathbf{u}^f_\varepsilon|^2 \, dl_xdt.
		$$ 
		\smallskip 
		
		\noindent
		\textbf{2.} \emph{Estimates in $\Omega_\varepsilon^{f, T}$.}
		Denote $\mathbf{W}^f_\varepsilon := \widetilde{\mathbf{R}}^f_\varepsilon - \mathbf{u}^f_\varepsilon.$ Clearly, 
		$\mathbf{W}^f_\varepsilon\big|_{\Gamma^T_{0,\varepsilon}} =  \mathbf{W}^f_\varepsilon \big|_{\Gamma^T_{\ell,\varepsilon}} = 0.$
		
		According to assumption $\mathbf{A2},$ each component of $\mathbf{W}^f_\varepsilon$ satisfies
		\begin{equation}\label{eq-5-12}
			\int_{\Omega_\varepsilon^{f, \tau}} \nabla \cdot \big(\big(\mathbf{W}^f_\varepsilon\big)_k \, \overrightarrow{V_\varepsilon}\big)\,
			\big(\mathbf{W}^f_\varepsilon\big)_k \, dxdt = \frac12 \int_{\Omega_\varepsilon^{f, \tau}} \nabla \cdot \big( \big(\mathbf{W}^f_\varepsilon\big)^2_k \, \overrightarrow{V_\varepsilon}\big) \, dxdt =
			\frac12 \int_{\Omega_\varepsilon^{f, \tau}} \big(\mathbf{W}^f_\varepsilon\big)^2_k \,  \big(\overrightarrow{V_\varepsilon}\cdot \boldsymbol{\nu}_\varepsilon \big) \, dxdt = 0,
		\end{equation} 	
		where the last equality uses \(\overrightarrow{V_\varepsilon}\cdot\boldsymbol{\nu}_\varepsilon=0\) on the relevant part of the spatial boundary together with the homogeneous trace conditions for \(\mathbf{W}^f_\varepsilon\) on $\Gamma^T_{0,\varepsilon}$ and $\Gamma^T_{\ell,\varepsilon}.$

		Using assumptions $\mathbf{A3},$ relations \eqref{eq-5-4} -- \eqref{eq-5-6},  estimates \eqref{function-L-bound}, identity \eqref{eq-5-12}, Lemmas~\ref{Th-4-4} and \ref{lemma-4-5}, the trace inequality
		\begin{equation}\label{trace-1}
			\varepsilon \int_{S^\pm_\varepsilon \cup G_\varepsilon} u^2 \, dl_x \le C\Big(\int_{\Omega_\varepsilon^f} u^2\, dx + \varepsilon^2 \int_{\Omega_\varepsilon^f} |\nabla u|^2\, dx\Big) \quad \forall \, u\in H^1(\Omega_\varepsilon^f)	
		\end{equation}
		proved in \cite{Melnyk-Popov-2009},
		and
		the same calculation technique as above, we obtain for every \(\tau\in(0,T]\) the inequality   
		\begin{multline}\label{eq-5-14}
			\int_{\Omega_\varepsilon^f} \big|\mathbf{W}^f_\varepsilon(x,\tau)\big|^2 \, dx\,  + \varepsilon \int_{\Omega_\varepsilon^{f, \tau}} \big|\nabla \mathbf{W}^f_\varepsilon\big|^2 \, dxdt \, \le \, C_1\int_{\Omega_\varepsilon^{f, \tau}} \big|\mathbf{W}^f_\varepsilon\big|^2  dxdt \, + \, C_2 \varepsilon \int_{S_\varepsilon^{\pm, \tau}} |\mathbf{W}^\pm_\varepsilon|^2 \, dl_xdt \, +\,   C_3 \, \varepsilon^3
			\\	
			+ C_4 \Bigg|\varepsilon \int_{S_\varepsilon^{\pm, \tau}} \mathbf{\Phi}^\pm\big(\mathbf{u}^\pm_0, \mathbf{w}^f_0, \tfrac{x}{\varepsilon}, x_1, t\big) \cdot \mathbf{W}^f_\varepsilon \, dl_xdt \,+\,
			\varepsilon \int_{G_\varepsilon^{\tau}} \mathbf{\Psi}\big(\mathbf{w}^f_0, \tfrac{x}{\varepsilon}, x_1, t\big) \cdot \mathbf{W}^f_\varepsilon \, dl_xdt	
			\\
			+ \int_{\Omega_\varepsilon^{f, \tau}} \widehat{\mathbf{F}}(\mathbf{u}^+_0, \mathbf{u}^-_0, \mathbf{w}^f_0, x_1, t) \cdot \mathbf{W}^f_\varepsilon \, dxdt\Bigg|. 
		\end{multline}
		
		To estimate the absolute value of the sum of the boundary and volume integrals on the right-hand side of \eqref{eq-5-14}, we use the definition of $\widehat{\mathbf{F}}$ (see \eqref{hat F}) and Lemma 2.1 \cite{Melnyk-Popov-2009}. As a result, we find that this sum is bounded by 
		$C_5 \, \varepsilon^{3/2} \, \||\nabla\mathbf{W}^f_\varepsilon|\|_{L^2(\Omega_\varepsilon^{f, \tau})}.$ Applying Cauchy's inequality, we further obtain that it is less than
		$$
		\varepsilon \, \delta \, \||\nabla\mathbf{W}^f_\varepsilon|\|^2_{L^2(\Omega_\varepsilon^{f, \tau})} + C_6(\delta) \, \varepsilon^2 \quad \text{for all} \ \ \delta >0.
		$$
		By choosing the appropriate \(\delta\) and including the gradient term into the left-hand side of \eqref{eq-5-14}, we obtain
		\begin{equation}\label{eq-5-15}  
			\int\limits_{\Omega_\varepsilon^f} \big|\mathbf{W}^f_\varepsilon(x,\tau)\big|^2 \, dx\,  + \varepsilon \int\limits_{\Omega_\varepsilon^{f, \tau}} \big|\nabla \mathbf{W}^f_\varepsilon\big|^2 \, dxdt \, \le \, C_7\int\limits_{\Omega_\varepsilon^{f, \tau}} \big|\mathbf{W}^f_\varepsilon\big|^2  dxdt \, + \,  C_8 \, \varepsilon  \int\limits_{S_\varepsilon^{\pm, \tau}} |\mathbf{W}^\pm_\varepsilon|^2 \, dl_xdt \,+\, C_9 \, \varepsilon^2 .
		\end{equation}
		Inequality \eqref{eq-5-15} is the corrected form of \eqref{eq-5-14}.
		
		\smallskip

		\noindent
		\textbf{3.}
		\emph{Estimates in the whole domain $\Omega^{T}$.} 
		Using \eqref{function-L-bound}, we deduce from \eqref{main-omega+}  the following inequality: 
		\begin{equation}\label{omega+}
			\int\limits_{\Omega_\varepsilon^+} \big|\mathbf{W}^+_\varepsilon(x,\tau)\big|^2  dx\,  + \int\limits_{\Omega_\varepsilon^{+, \tau}} \big|\nabla \mathbf{W}^+_\varepsilon\big|^2  dxdt \, \le \, C_1\int\limits_{\Omega_\varepsilon^{+, \tau}} \big|\mathbf{W}^+_\varepsilon\big|^2  dxdt \,
			+ C_2 \int\limits_{S_\varepsilon^{+, \tau}} \big|\mathbf{W}^f_\varepsilon\big|^2  dl_xdt \, + C_3 \, \varepsilon,
		\end{equation}
		and the same inequality holds for $\mathbf{W}^-_\varepsilon.$
		
		Applying  Gronwall's inequality in time to \eqref{eq-5-15} yields, for every \(\tau\in(0,T]\), 
		\begin{equation}\label{main-1}
			\int\limits_{\Omega_\varepsilon^f} \big|\mathbf{W}^f_\varepsilon(x,\tau)\big|^2 \, dx\,  + \varepsilon \int\limits_{\Omega_\varepsilon^{f, \tau}} \big|\nabla \mathbf{W}^f_\varepsilon\big|^2 \, dxdt \, \le \, C_{10} \, \varepsilon \, \bigg(\int\limits_{S_\varepsilon^{+, \tau}} |\mathbf{W}^+_\varepsilon|^2 \, dl_xdt \,+\, \int\limits_{S_\varepsilon^{-, \tau}} |\mathbf{W}^-_\varepsilon|^2 \, dl_xdt \,+\,  \varepsilon\bigg).
		\end{equation}  
		Combining \eqref{main-1} with the trace estimate \eqref{trace-1} gives
		\begin{equation}\label{main-2}
			\int\limits_{S_\varepsilon^{+, \tau}} \big|\mathbf{W}^f_\varepsilon\big|^2  dl_xdt	\le C_{11} 
			\bigg(\int\limits_{S_\varepsilon^{+, \tau}} |\mathbf{W}^+_\varepsilon|^2 \, dl_xdt \,+\, \int\limits_{S_\varepsilon^{-, \tau}} |\mathbf{W}^-_\varepsilon|^2 \, dl_xdt \,+\,  \varepsilon\bigg),
		\end{equation}
		and the same inequality holds for $\int_{S_\varepsilon^{-, \tau}} \big|\mathbf{W}^f_\varepsilon\big|^2  dl_xdt.$ 
		
		Substituting \eqref{main-2} into \eqref{omega+} and using the trace inequality with an arbitrary small parameter \(\delta>0\) (to absorb gradient terms into the left-hand side) we obtain
		\begin{equation}\label{main-3}
			\int\limits_{\Omega_\varepsilon^{\pm}} \big|\mathbf{W}^{\pm}_\varepsilon(x,\tau)\big|^2  dx\,  + \int\limits_{\Omega_\varepsilon^{\pm, \tau}} \big|\nabla \mathbf{W}^{\pm}_\varepsilon\big|^2  dxdt \, \le \, C_1\int\limits_{\Omega_\varepsilon^{\pm, \tau}} \big|\mathbf{W}^\pm_\varepsilon\big|^2  dxdt \,
			+ C_2 \, \varepsilon,
		\end{equation}
		where the left- and right-hand sides are understood as the sums of the corresponding integrals over \(\Omega_\varepsilon^+\) and \(\Omega_\varepsilon^-\) (and their time cylinders).
		
		By Gronwall's inequality (in time) and the standard energy argument applied to \eqref{main-3} we deduce the uniform estimate
		\begin{equation}\label{main-4}
			\max_{t\in [0,T]} \|\mathbf{W}^{\pm}_\varepsilon(\cdot,t)\|_{L^2(\Omega_\varepsilon^{\pm})} \, + \, 
			\||\nabla\mathbf{W}^{\pm}_\varepsilon|\|_{L^2(\Omega_\varepsilon^{\pm,T})} \le C_1 \, \varepsilon^{1/2}.
		\end{equation}
		
		From \eqref{main-4} and the trace inequality we further obtain
		\begin{equation}\label{main-5}
			\int_{S_\varepsilon^{\pm, \tau}} |\mathbf{W}^\pm_\varepsilon|^2 \, dl_xdt \le C_2 \, \varepsilon.
		\end{equation}
		
		Finally, substituting \eqref{main-5} into \eqref{main-1} yields the uniform estimate for the fracture part:
		\begin{equation}\label{main-6}
			\max_{t\in [0,T]} \|\mathbf{W}^{f}_\varepsilon(\cdot,t)\|_{L^2(\Omega_\varepsilon^{f})} \, + \, 
			\sqrt{\varepsilon} \, \||\nabla\mathbf{W}^{f}_\varepsilon|\|_{L^2(\Omega_\varepsilon^{f,T})} \le C_2 \, \varepsilon.
		\end{equation}
		Combining \eqref{function-L-bound} with \eqref{main-6}, we obtain estimate \eqref{main-est-2}.
	\end{proof}
	
	\begin{corollary}
		As a consequence of \eqref{main-est-1} and \eqref{main-est-2}, we have
		\begin{gather}
			\max_{t\in [0,T]} \|\mathbf{u}^\pm_\varepsilon(\cdot,t) - \mathbf{u}^\pm_0(\cdot,t)\|_{L^2(\Omega_\varepsilon^{\pm})}  \le \tilde{C}_1 \, \varepsilon^{1/2}, \notag 
			\\ \label{main-est-3}
			\max_{t\in [0,T]} \|\mathbf{u}^f_\varepsilon(\cdot,t) - \mathbf{w}^f_0(\cdot,t)\|_{L^2(\Omega_\varepsilon^{f})} \le \tilde{C}_2 \, \varepsilon,
		\end{gather}
		where $(\mathbf{u}^+_0, \mathbf{w}^f_0, \mathbf{u}^-_0)$ is the solution to the homogenized problem \eqref{homo-problem}. 
	\end{corollary}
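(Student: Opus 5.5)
The corollary follows from Theorem~\ref{main theorem} by the triangle inequality. The plan is to show that the correctors separating $\mathbf{R}^\pm_\varepsilon$ from $\mathbf{u}^\pm_0$, and $\mathbf{R}^f_\varepsilon$ from $\mathbf{w}^f_0$, are small in $L^2$ uniformly in $t$, at least of order $\varepsilon^{1/2}$ and $\varepsilon$ respectively. Writing
$$
\mathbf{u}^\pm_\varepsilon-\mathbf{u}^\pm_0=(\mathbf{u}^\pm_\varepsilon-\mathbf{R}^\pm_\varepsilon)+\varepsilon\chi_0(x_2)\boldsymbol{\mathcal{N}}^\pm\big(\tfrac{x}{\varepsilon},x_1,t\big),
$$
the first term is controlled by \eqref{main-est-1}.

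For the second term I will use the properties of the coefficients. $Z^\pm_1$ is bounded, by Lemma~\ref{lemma-4-1} and its exponential stabilization \eqref{eq-18} with $c_0=0$. $Z^\pm_2-\xi_2$ is bounded, by the asymptotics \eqref{eq-20}. The traces $\partial_{x_i}\mathbf{u}^\pm_0(x_1,0,t)$ are bounded by Lemma~\ref{Th-4-4}. Hence $|\boldsymbol{\mathcal{N}}^\pm|\le C$ uniformly on $\Omega^{\pm,T}_\varepsilon$, and $\|\varepsilon\chi_0\boldsymbol{\mathcal{N}}^\pm(\cdot,t)\|_{L^2(\Omega^\pm_\varepsilon)}\le C\varepsilon$. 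That is better than $\varepsilon^{1/2}$, so the first estimate follows.

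In the fracture, $\mathbf{u}^f_\varepsilon-\mathbf{w}^f_0=(\mathbf{u}^f_\varepsilon-\mathbf{R}^f_\varepsilon)+\varepsilon\boldsymbol{\mathcal{N}}^f+\chi_1\boldsymbol{\Pi}^f$.
\begin{itemize}
\item The first term is bounded by $\tilde C_2\varepsilon$ by \eqref{main-est-2}.
\item The term $\varepsilon N_1^{(k)}(x/\varepsilon)\partial_{x_1}w_{0,k}$ is bounded pointwise by $C\varepsilon$, since $N^{(k)}_1$ is smooth on $\overline{Y_0}$ and $\mathbf{w}^f_0\in C^3$. Because $|\Omega^f_\varepsilon|=\mathcal{O}(\varepsilon)$, its $L^2$ norm is even $\mathcal{O}(\varepsilon^{3/2})$.
\item For the boundary layer, Lemma~\ref{lemma-4-5} gives $|\Pi_{0,k}|+\varepsilon|\Pi_{1,k}|\le Ce^{\delta_0(x_1-\ell)/\varepsilon}$, uniformly in $t$ since $\Phi_0(t)$ is bounded. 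Integrating over the thin strip of width $\mathcal{O}(\varepsilon)$ in $x_2$ gives
$$
\|\chi_1\boldsymbol{\Pi}^f(\cdot,t)\|^2_{L^2(\Omega^f_\varepsilon)}\le C\varepsilon\int_{-\infty}^{\ell}e^{2\delta_0(x_1-\ell)/\varepsilon}dx_1=\mathcal{O}(\varepsilon^2),
$$
so this term is $\mathcal{O}(\varepsilon)$.
\end{itemize}
Taking the maximum over $t\in[0,T]$ in all the bounds yields \eqref{main-est-3}.

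The only delicate point is the uniformity in $t$ of the boundary-layer bound. It needs the decay constant in Lemma~\ref{lemma-4-5} to be independent of $t$, which holds because $t$ enters only through the factor $\Phi_0(t)$ for $\Pi_0$. For $\Pi_1$ it enters through the source $-\partial_t\Pi_{0,k}=-\Phi_0'(t)\Pi|_{\Phi=1}$, and the Remark after Lemma~\ref{lemma-4-5} covers this case. Everything else is a routine pointwise bound.
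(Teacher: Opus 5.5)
Your proposal is correct and follows exactly the reasoning behind the paper's one-line ``as a consequence'': the triangle inequality with \eqref{main-est-1}--\eqref{main-est-2}, plus $L^2$-smallness of the correctors. You rightly use the exponential decay from Lemma~\ref{lemma-4-5}, which is what brings $\chi_1\Pi_{0,k}$ down to $\mathcal{O}(\varepsilon)$; boundedness alone would only give $\mathcal{O}(\varepsilon^{(1+\gamma)/2})$. For the term $\varepsilon\chi_1\Pi_{1,k}$ you do not even need the Remark after Lemma~\ref{lemma-4-5}: its boundedness, the factor $\varepsilon$ and $|\Omega^f_\varepsilon|=\mathcal{O}(\varepsilon)$ already give $\mathcal{O}(\varepsilon^{3/2})$.
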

	\begin{corollary}
		If $\Omega_\varepsilon^{f}$ is a cylindrical domain, i.e., $\Omega_\varepsilon^{f} = Q_\varepsilon$ (see \eqref{cylindr})  (in this case
		$T_0 = \emptyset,$ $G_\varepsilon = \emptyset,$
		and the corresponding boundary conditions in problems~\eqref{probl}, \eqref{potential0}, 
		\eqref{problem-N-1}, and \eqref{problem-N-2} are absent), then estimate \eqref{main-est-3} implies the inequality
		\begin{equation*}
			\max_{t\in [0,T]} \big\|E_\varepsilon[\mathbf{u}^f_\varepsilon](\cdot,t) - \mathbf{w}^f_0(\cdot,t)\big\|_{L^2(0, \ell)} \le \tilde{C}_3 \, \sqrt{\varepsilon},
		\end{equation*}
		where
		$$
		E_\varepsilon[\mathbf{u}^f_\varepsilon](x_1, t) := \frac{1}{\varepsilon \big(h_+(\frac{x_1}{\varepsilon}) + h_-(\frac{x_1}{\varepsilon})\big)}
		\int_{-\varepsilon h_-(\frac{x_1}{\varepsilon})}^{\varepsilon h_+(\frac{x_1}{\varepsilon})} \mathbf{u}^f_\varepsilon(x_1, x_2,t) \, dx_2.
		$$
	\end{corollary}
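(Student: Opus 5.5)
The plan is to obtain the averaged estimate directly from \eqref{main-est-3} by a pointwise Cauchy--Schwarz argument in the transversal variable $x_2$. No new asymptotic construction is needed. When $T_0=\emptyset$ the fracture is $\Omega^f_\varepsilon=Q_\varepsilon$, so for each $x_1\in(0,\ell)$ the vertical section is the full interval $\big(-\varepsilon h_-(\tfrac{x_1}{\varepsilon}),\,\varepsilon h_+(\tfrac{x_1}{\varepsilon})\big)$. Its length is
\[
\varepsilon\,H_\varepsilon(x_1):=\varepsilon\big(h_+(\tfrac{x_1}{\varepsilon})+h_-(\tfrac{x_1}{\varepsilon})\big).
\]
Since $h_\pm$ are positive, continuous and $1$-periodic, there is $c_0>0$, independent of $\varepsilon$, with $H_\varepsilon\ge c_0$. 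This lower bound is the only geometric input required.

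First, because $\mathbf{w}^f_0(x_1,t)$ does not depend on $x_2$, it equals its own cross-sectional average. Hence
\[
E_\varepsilon[\mathbf{u}^f_\varepsilon](x_1,t)-\mathbf{w}^f_0(x_1,t)=\frac{1}{\varepsilon H_\varepsilon(x_1)}\int_{-\varepsilon h_-}^{\varepsilon h_+}\big(\mathbf{u}^f_\varepsilon-\mathbf{w}^f_0\big)(x_1,x_2,t)\,dx_2 .
\]
By the Cauchy--Schwarz (Jensen) inequality on the section,
\[
\big|E_\varepsilon[\mathbf{u}^f_\varepsilon]-\mathbf{w}^f_0\big|^2(x_1,t)\le\frac{1}{\varepsilon H_\varepsilon(x_1)}\int_{-\varepsilon h_-}^{\varepsilon h_+}\big|\mathbf{u}^f_\varepsilon-\mathbf{w}^f_0\big|^2\,dx_2 .
\]
Integrating over $x_1\in(0,\ell)$ and using $H_\varepsilon\ge c_0$ together with Fubini on $Q_\varepsilon$, we get for every $t\in[0,T]$
\[
\big\|E_\varepsilon[\mathbf{u}^f_\varepsilon](\cdot,t)-\mathbf{w}^f_0(\cdot,t)\big\|^2_{L^2(0,\ell)}\le\frac{1}{c_0\,\varepsilon}\,\big\|\mathbf{u}^f_\varepsilon(\cdot,t)-\mathbf{w}^f_0(\cdot,t)\big\|^2_{L^2(\Omega^f_\varepsilon)} .
\]

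Second, we insert \eqref{main-est-3}, which bounds the right-hand side by $\tilde C_2^2\varepsilon^2/(c_0\varepsilon)$. Taking square roots and the maximum over $t\in[0,T]$ gives the claim with $\tilde C_3=\tilde C_2/\sqrt{c_0}$. One should also check that $E_\varepsilon[\mathbf{u}^f_\varepsilon](\cdot,t)$ is well defined and lies in $L^2(0,\ell)$ for each $t$. This holds because $\mathbf{u}^f_\varepsilon\in C([0,T];L^2(\Omega^f_\varepsilon))$, so Fubini applies slice-wise in $x_1$.

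There is no real obstacle. The loss of $\sqrt{\varepsilon}$ relative to \eqref{main-est-3} is exactly the price of dividing by the fracture thickness $\varepsilon$, and it is consistent with the $\mathcal{O}(1)$ boundary layer $\boldsymbol{\Pi}^f$: that layer contributes only $\mathcal{O}(\varepsilon)$ to the $L^2(\Omega^f_\varepsilon)$-norm and is already accounted for in \eqref{main-est-3}. The one point to watch is that the cylindrical assumption is genuinely used. With perforations the section would no longer be a full interval, and averaging the constant $\mathbf{w}^f_0$ over the perforated section would still return $\mathbf{w}^f_0$, but the definition of $E_\varepsilon$ would have to be adjusted to the actual section length.
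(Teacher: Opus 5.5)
Your proposal is correct and is the argument the paper intends; the paper gives no written proof and simply states that the corollary follows from \eqref{main-est-3}. Applying Cauchy--Schwarz on each full cross-section of $Q_\varepsilon$, using the uniform lower bound $h_++h_-\ge c_0>0$, and then Fubini costs exactly the factor $\varepsilon^{-1/2}$ (the reciprocal square root of the fracture thickness) and yields the claim with $\tilde C_3=\tilde C_2/\sqrt{c_0}$.
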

	
	\section{Conclusions}\label{Sect-6}
	
\textbf{1.} In this work we have analyzed the nonlinear reactive transport problem \eqref{probl} in a layered medium containing a thin, perforated, and geometrically oscillatory fracture. The fracture exhibits $\varepsilon$-periodically  varying aperture and $\varepsilon$-periodically distributed perforations, while transport within it is convection dominated with Péclet number of order $\varepsilon^{-1}.$ This interplay of geometric complexity, nonlinear boundary interactions, and strong advection yields, in the limit $\varepsilon \rightarrow 0,$ the homogenized problem \eqref{homo-problem}, which differs qualitatively  from previously studied thin-layer models. For this limit system we prove existence, uniqueness, and $C^3$-regularity of solutions.

In addition, we have constructed the approximation \eqref{app-functions} for the solution to problem \eqref{probl}, which captures in greater detail the effective fracture dynamics generated by strong advection and the microscale geometry. 
This approximation consists of the solution to the homogenized problem \eqref{homo-problem}; the boundary-layer correctors in the bulk domains, which, through the boundary-layer solutions $Z_1^{\pm }$ and $Z_2^{\pm },$ account for the boundary oscillations (see \S\ref{Par-4-2}); the functions $\{ N_1^{(k)}\} _{k=1}^{\mathcal{M}}$ solving the cell problems \eqref{problem-N-1} and reflecting the geometric complexity of the fracture together with the nonlinear boundary interactions; and the boundary-layer solutions $\{\Pi _{0,k}\}_{k=1}^{\mathcal{M}}$ and $\{\Pi_{1,k}\}_{k=1}^{\mathcal{M}}$ of problems \eqref{prim+probl+0} and \eqref{prim+probl+1}, respectively, which compensate for the remaining residuals and enforce the boundary condition on $\Gamma _{\ell ,\varepsilon }^T$ in \eqref{probl}.

Our main result is the derivation of quantitative error estimates for the difference between the solution of the original problem and the constructed approximation in the energy norm. These estimates are of order $\mathcal{O}(\varepsilon )$ in the bulk domains and $\mathcal{O}(\varepsilon^{1/2})$ in the fracture (Theorem~\ref{main theorem}). They provide a rigorous justification of the homogenized model and quantify the contribution of the microstructure to the macroscopic behavior.

It is important to emphasize the significance of the approximation, which provides detailed information about the structure of the solution for small values of the parameter $\varepsilon$  in a neighborhood of the fracture, thereby enabling more accurate modeling of complex physical phenomena. Convergence results alone show only what happens to the solution in the limit $\varepsilon =0$ and may be insufficient, since the limit may lose important information present in the original problem. For example, the approximation \eqref{app-functions} indicates that both the solution of the original problem \eqref{probl} and its gradient exhibit a distinctive, rapidly oscillating behavior along the interfaces of the fracture in the bulk domains, and this feature cannot be seen in the solution to the homogenized problem.

The analytical framework developed here can be adapted to other classes of thin-layer problems:
\begin{itemize}
	\item
	with Péclet number of order $\mathcal{O}(1)$; in this situation the homogenized subsystem on the flat interface becomes a parabolic system
	\[
	\partial_t \mathbf{w}^f_0  
	+ \widehat{\mathbb{D}}\, \partial^2_{x_1} \mathbf{w}^f_0
	+ \hat{\mathrm{v}}\, \partial_{x_1} \mathbf{w}^f_0
	= \widehat{\mathbf{F}}\big(\mathbf{u}^+_0(x_1,0,t),\, \mathbf{u}^-_0(x_1,0,t),\, \mathbf{w}^f_0(x_1,t),\, x_1, t\big),
	\]
	where $\widehat{\mathbb{D}} := \mathrm{diag}\big(\widehat{d}^{\,1}_{11},\ldots,\widehat{d}^{\,\mathcal{M}}_{11}\big)$ and each coefficient $\widehat{d}^{\,k}_{11}$ is defined in \eqref{hom-coeff};
	\item
	in three-dimensional configurations where a bulk domain is separated by a thin perforated plate with rapidly varying thickness, and where the perforated region may form a connected subset.
\end{itemize}

\smallskip 

\textbf{2.}
Our results demonstrate that the parameter choice  \(\alpha=\beta=1\) in the intensity factors of the nonlinear boundary interactions in the fracture represents a  critical threshold (see Remark~\ref{remark-2-6}). In this case, the boundary interactions are incorporated in the homogenized hyperbolic subsystem through the vector function $\widehat{\mathbf{F}}=(\widehat{F}_1,\ldots,\widehat{F}_\mathcal{M})$ defined in \eqref{hat F}. If 
$\alpha >1$ and $\beta > 1,$ then, as follows from formal calculations in Section~\ref{Sect-3}, the homogenized problem splits into two independent problems in the bulk domains $\Omega^+$ and $\Omega^-$ with the following boundary conditions on the flat interface:
$$
\pm\, \mathbb{D}^\pm \partial_{x_2} \mathbf{u}^\pm_0 =  
\upharpoonleft\!\! {S}^\pm\!\!\upharpoonright_1\widetilde{\boldsymbol{\Upsilon}}^\pm\big(\mathbf{u}^\pm_0(x_1,0,t), \mathbf{0}, x_1, t\big).
$$
The main challenge in this regime is to detect the influence of reactive transport in the fracture in the higher-order terms of the asymptotics, which will depend on the parameters $\alpha$ and $\beta.$ This constitutes a natural direction for future research, aimed at understanding different scaling regimes for the boundary interactions.

Another promising direction is the development of numerical methods for the homogenized problem \eqref{homo-problem}. Its structure, which combines nonlinear diffusion–reaction equations in the bulk with a first‑order semilinear hyperbolic subsystem on the interface and nonlinear transmission conditions, lies outside the scope of standard finite element or finite volume schemes for interface problems. Dedicated discretization strategies are therefore required to handle the mixed hyperbolic–parabolic character and the nonlinear coupling.

The regularity results established for the solution to the homogenized system provide a solid analytical foundation for such developments. As in classical finite element analysis, higher regularity is expected to ensure optimal convergence rates, facilitate the construction of stable interface fluxes, allow accurate reconstruction of characteristic curves for the hyperbolic subsystem, and enable the use of high‑order schemes and adaptive refinement strategies that are essential for resolving the interplay between bulk diffusion and interface transport.

	\section*{Acknowledgments}
	The first author gratefully acknowledges the support of  the MSCA4Ukraine grant, which made it possible to carry out this research at the University of Stuttgart. This project has received funding through the MSCA4Ukraine project, which is funded by the 
	European Union. Views and opinions expressed are however those of the author(s) only and do not 
	necessarily reflect those of the European Union, the European Research Executive Agency or the 
	MSCA4Ukraine Consortium. Neither the European Union nor the European Research Executive 
	Agency, nor the MSCA4Ukraine Consortium as a whole nor any individual member institutions of 
	the MSCA4Ukraine Consortium can be held responsible for them.
	
	All authors acknowledge  (partial) funding from the Deutsche Forschungsgemeinschaft (DFG, German Research Foundation) – Project Number 327154368 – SFB 1313.


\end{document}